\newtheorem{theorem}{Theorem}[section]
\newtheorem{lemma}{Lemma}[section]
\newtheorem{proposition}{Proposition}[section]
\def\mc{\mathcal}
\def\a{\alpha}
\def\b{\beta}
\def\t{\theta}
\def\d{\delta}
\def\g{\gamma}
\def\s{\sigma}
\def\l{\lambda}
\def\p{\partial}
\def\e{\varepsilon}
\def\v{\varphi}
\def\G{\Gamma}
\def\k{\kappa}
\def\o{\omega}
\def\R{\mathbb{R}}
\title{\vskip-2.5cm
{Global multiplicity results in a Moore--Nehari type problem with a spectral parameter}
\thanks{
This paper has been written under the auspices of the Ministry of Science and Innnovation of Spain under Reserach Grant PID2021-123343NB-I00, and  the Institute of Interdisciplinary Mathematics of Complutense University of Madrid. } }
\author{
	\sc Juli\'an L\'opez-G\' omez and Eduardo Mu\~{n}oz-Hern\'andez
	\\
	\small Universidad Complutense de Madrid
	\\
	\small Instituto de Matem\'{a}tica Interdisciplinar (IMI)
	\\
	\small Departamento de An\'alisis Matem\'atico y Matem\'atica
	Aplicada
	\\
	\small  Plaza de las Ciencias 3, 28040   Madrid, Spain
	\\
	\small E-mails: {\tt   julian@mat.ucm.es} and {\tt eduardmu@ucm.es}
	\medskip
	\\
	\sc Fabio Zanolin
	\\
	\small Università degli Studi di Udine
	\\
	\small Dipartimento di Matematica e Informatica
	\\
	\small  Via delle Scienze 206, 33100, Udine, Italy
	\\
	\small E-mail: {\tt fabio.zanolin@uniud.it }
	\bigskip
}
\begin{document}
\maketitle	
\begin{abstract}
This paper analyzes the structure of the set of positive solutions of \eqref{1.1}, where $a\equiv a_h$
is the piece-wise constant function defined in \eqref{1.3} for some $h\in (0,1)$. In our analysis, $\l$ is regarded as a bifurcation parameter, whereas $h$ is viewed as a deformation parameter between the autonomous case when $a=1$ and the linear case when $a=0$. In this paper, besides establishing some of the multiplicity results suggested by the numerical experiments of \cite{CLGT-2024}, we have analyzed the asymptotic
behavior of the positive solutions of \eqref{1.1} as $h\uparrow 1$, when the shadow system of \eqref{1.1} is the linear equation $-u''=\pi^2 u$. This is the first paper where such a problem has been addressed. Numerics is of no help in analyzing this singular perturbation problem because the positive solutions blow-up point-wise in $(0,1)$ as $h\uparrow 1$ if $\l<\pi^2$.

\vspace{0.2cm}

\noindent {\bf Keywords}: Moore--Nehari equation. Multiplicity of positive solutions. Point-wise blow-up to a metasolution.  Spectral parameter.\\
\noindent {\bf MSC 2020}: 34B08, 34B16, 34B18.\\
\end{abstract}

\section{Introduction}\label{sec1}

\noindent This paper studies the one-dimensional second-order superlinear problem
\begin{equation}
\label{1.1}
\left\{
\begin{array}{ll}
-u'' = \lambda u + a(x)u^p,\quad x\in(0,1),\\[1pt]
u(0)=u(1)=0,
\end{array}
\right.
\end{equation}
where $\l\in\mathbb{R}$ is regarded as a bifurcation parameter, $p>1$ is fixed, and $a:[0,1]\to [0,\infty)$ is a non-zero piece-wise constant function. Thus, $a\gneq 0$ in the sense that $a\geq 0$, but $a\neq 0$. The main goal of this paper is to study the global structure of the set of positive solutions of \eqref{1.1}
in terms of the parameter $\l$ and of the nature of the weight function $a(x)$. As $p>1$, by the uniqueness of solution for the Cauchy problem associated to the differential equation of \eqref{1.1}, any positive solution, $u$, of \eqref{1.1} must be strongly positive, in the sense that
\begin{equation}
\label{1.2}
	u(x)>0\;\;\hbox{for all}\; x\in (0,1),\;\;  u'(0)>0\;\; \hbox{and}\;\; u'(1)<0.
\end{equation}
In the special case when $\l=0$ and $p$ is an odd integer, the problem \eqref{1.1} has been already analyzed by Moore and Nehari  \cite{MoNe-1959} for a symmetric piecewise weight function $a(x)$ defined, after a spatial scaling, as
\begin{equation}
\label{1.3}
a(x)= \;
\begin{cases}
1\qquad\hbox{if}\;x\in [0,\tfrac{1-h}{2}]
\cup [\tfrac{1+h}{2},1],\\[1ex]
0\qquad\hbox{if}\;x\in (\tfrac{1-h}{2},\tfrac{1+h}{2}),
\end{cases}
\end{equation}
where $h\in (0,1)$ is fixed. The main result of \cite{MoNe-1959} shows that, for some values of $h\in (0,1)$, the problem \eqref{1.1} possesses, at least, three positive solutions: a first solution which is symmetric (even) with respect to $x=0.5$, and two additional asymmetric solutions, reflected from each other
about $x=0.5$. In recent years, also in the special case when $\l=0$, some additional work has been done by 
Kajikiya et al. \cite{Ka-2018} by considering $h$ as a bifurcation parameter and establishing the existence of a symmetry-breaking from the symmetric solution to a continuum of asymmetric solutions at some critical value $h_*\in(0,1)$ (see also \cite{Ka-2022} for nodal solutions if $p\in(0,1)$, and \cite{Ka-2023} for nodal solutions if $p>1$). Some related problems where a
symmetry-breaking occurs leading to multiplicity of positive solutions can be found in \cite{Ka-2012, Ka-2014,Ta-2009,Ta-2013}.
\par
The main goal of this paper is to ascertain the global topological structure of the set of positive solutions of \eqref{1.1}, $(\l,u)$, regarding $\l\in\R$ as the primary bifurcation parameter. Then, we will ascertain how
changes the underlying global bifurcation diagram in $\l$ as $h$ varies in the interval $(0,1)$. Thus, $h$ is regarded as the  secondary parameter of \eqref{1.1} for the special choice \eqref{1.3}. Throughout this paper, to emphasize the dependence of $a(x)$ on $h\in (0,1)$, we will eventually set
$$
   a\equiv a_h.
$$
According to, e.g.,  Amann and L\'opez-G\'omez \cite{ALG}, under these assumptions, for every $h\in (0,1)$, the set of non-trivial non-negative solutions, $(\l,u)$, of problem \eqref{1.1} has a component, $\mathscr{C}_h^+$, such that $(\pi^2,0)\in \mathscr{C}_h^+$ and
$$
  \mathcal{P}_\l (\mathscr{C}_h^+)= (-\infty,\pi^2],
$$
where $\mathcal{P}_\l$ stands for the $\l$-projection operator,
$$
  \mathcal{P}_\l (\l,u)=\l \;\; \hbox{for all}\;\; (\l,u)\in \R \times \mathcal{C}([0,1];\R).
$$
In particular, \eqref{1.1} admits a positive solution if, and only if,
$\l<\pi^2$. Therefore, the condition $\l <\pi^2$ will be imposed throughout this paper.
Moreover, according to the numerical experiments of Cubillos et al. \cite{CLGT-2024}, for every $h\in (0,1)$, there is a value of the parameter $\l$, denoted by $\l_{h,b}$, which is a  subcritical pitchfork bifurcation value for $\mathscr{C}_h^+$. These numerical experiments also suggest that \eqref{1.1} has three positive solutions for every $\l<\l_{h,b}$, one symmetric and two asymmetric, and that
\begin{equation}
\label{1.4}
\lim_{h\downarrow  0}\l_{h,b}=-\infty\quad \hbox{and}\quad \lim_{h\uparrow 1}\l_{h,b}=\pi^2.
\end{equation}
Although the first limit of \eqref{1.4} is rather natural, because the limiting shadow
problem as $h\downarrow 0$,
\begin{equation}
\label{1.5}
\left\{
\begin{array}{ll}
-u'' = \lambda u+u^p\quad \hbox{in}\;\;(0,1),\\[1pt]
u(0)=u(1)=0,
\end{array}
\right.
\end{equation}
has a unique positive solution for each $\l \in (-\infty,\pi^2)$, which follows by a direct analysis of the phase portrait of the autonomous differential equation
$$
  -u''=\l u+ u^p,
$$
the second limit of \eqref{1.4} is far from expected, because it  entails that, for every $\e>0$ and sufficiently small $1-h$, \eqref{1.1} has three positive solutions for every $\l \in (-\infty,\pi^2-\e]$, while the limiting shadow system of \eqref{1.1} as $h\uparrow 1$ is the linear problem 
\begin{equation}
\label{1.6}
\left\{
\begin{array}{ll}
-u'' = \lambda u\quad \hbox{in}\;\; (0,1),\\[1pt]
u(0)=u(1)=0,
\end{array}
\right.
\end{equation}
which has a positive solution if, and only if, $\l=\pi^2$. Thus, the global multiplicity result as $h\uparrow 1$, for every $\l\in (-\infty,\pi^2)$,  is attributable to  the nonlinear character of $a_hu^p=u^p$ in
$[0,\frac{1-h}{2})\cup (\frac{1+h}{2},1]$, which is a very tiny set as $h\uparrow 1$. Another multiplicity result in a similar vein, caused by spatial heterogeneities, has been documented by L\'opez-G\'omez and Rabinowitz in \cite{LGRab-2016}.
\par
The main goal of this paper is to deliver some analytical proofs of some of these
features and analyze the point-wise behavior of the three positive solutions of \eqref{1.1}
as $h\uparrow 1$. Precisely, our main findings are the following ones:
\begin{enumerate}
\item[{\rm (F1)}] For every $\l<\pi^2$, \eqref{1.1} has a unique positive symmetric solution, $u_\l$, and
$$
		\mathscr{C}_{h,s}^+=\{(\l,u_\l)\;:\; \l \in (-\infty,\pi^2]\}
$$
is a continuous curve such that $\mathscr{C}_{h,s}^+\subset \mathscr{C}_{h}^+$.

\item[{\rm (F2)}] For every $h\in (0,1)$, there exists $\tilde \l=\tilde \l(h)<\pi^2$ such that, for every $\l\leq \tilde \l(h)$, \eqref{1.1} has, at least, three positive solutions.

\item[{\rm (F3)}] For any given $\lambda\in \left(\pi^2/4,\pi^2\right)$, there exists $R_{\lambda}>0$ such that, for every $R>R_{\lambda}$, the problem \eqref{1.1} has, at least, two asymmetric positive solutions for a suitable $h=h(R)\in (0,1)$. Moreover,
$$
    \lim_{R\uparrow +\infty}h(R)=1.
$$

\item[{\rm (F4)}] For every $\l<\pi^2$ and $h\in (0,1)$, let $u_h$ be any positive solution of
\eqref{1.1}. Then,
\begin{equation}
\label{1.7}
	\lim_{h\uparrow 1}u_h(x)=+\infty \;\; \hbox{for all}\;\; x\in (0,1).
\end{equation}
In particular, the components $\mathscr{C}_{h,s}^+$ and $\mathscr{C}_h^+$ blow-up, point-wise in $(0,1)$, as $h\uparrow 1$.
\item[{\rm (F5)}] For every $\a>0$, there are two increasing sequences, $\{h_n\}_{n\geq1}$ in $(0,1)$ and $\{\l_n\}_{n\geq1}$ in $(0,\pi^2)$, such that
$$
\lim_{n\to+\infty}h_n=1, \quad \lim_{n\to+\infty}\l_n=\pi^2,\quad \hbox{and}\;\; \|u_{\l_n}\|_\infty =\a \;\;
\hbox{for all}\;\; n\geq 1,
$$
where $u_{\l_n}$ is the unique symmetric positive solution of \eqref{1.1} for
$(\l,h)=(\l_n,h_n)$, $n\geq 1$. Moreover,
\begin{equation*}
\lim_{n\to+\infty} u_{\l_n}=\alpha\sin(\pi \cdot)\quad  \hbox{in}\;\; \mc{C}^1([0,1];\R).
\end{equation*}
In other words,  $(\pi^2,\a \sin(\pi \cdot))\in \mathscr{C}_1^+$ perturbs into the sequence
$$
  (\l_n,u_{\l_n})\in \mathscr{C}_{h_n,s}^+,\quad n\geq 1.
$$
\end{enumerate}

\noindent The existence and the uniqueness of the symmetric solution established in (F1) will be proven in Section 2. It combines a phase plane analysis with some sharp comparison results. The multiplicity result
established in (F2) will be proven in Section 4 through some phase plane techniques based on a technical device of \cite{LGTZ-2014}. The multiplicity result established in (F3) will be proven in Section 5 combining some sophisticated phase plane techniques with the Mountain Climbing Theorem of \cite{Ho-1952,Wh-1966, Ke-1993}. Finally, the results stated in (F4) and (F5) will be proven in Section 3.
\par
Although, according to \cite{Ka-2018}, it is already known that
$$
   \lim_{h\uparrow 1} \|u_h\|_{\infty}= +\infty \quad \hbox{if}\;\; \l=0,
$$
the main novelty of our Theorem \eqref{3.1} establishes that
$$
   \lim_{h\uparrow 1} u_h(x)= +\infty \quad \hbox{for all}\;\; \l\in (-\infty,\pi^2) \;\;
   \hbox{and}\;\; x\in (0,1),
$$
which is an extremely delicate issue for $\l<0$.  According to (F4) and (F5), the positive solutions of \eqref{1.1} blow-up as $h\uparrow 1$, point-wise in $(0,1)$, if $\l<\pi^2$, while, simultaneously,  any solution $(\l,u)=(\pi^2,\a \sin (\pi x))$ of \eqref{1.6} perturbs
into solutions of \eqref{1.1} with $\l\sim \pi^2$ and $h \sim 1$. Thus, adopting the terminology of \cite{LG-2015}, all the positive solutions of \eqref{1.1} approximate a metasolution of $-u''=\pi^2 u$ as $h\uparrow 1$.  This is the first occasion that such  a phenomenology has been described in the literature.
As a byproduct, it becomes apparent how the pioneering result of Moore and Nehari \cite{MoNe-1959} is actually global in the parameter $\l$.
\par

\setcounter{equation}{0}

\section{Symmetric positive solutions of \eqref{1.1}}
\label{sec2}
\noindent In this section, we will construct the symmetric positive solutions of \eqref{1.1} for the choice \eqref{1.3} for all $h\in (0,1)$. To this purpose, we consider the associated planar system
\begin{equation}
	\label{2.1}
	\left\{
	\begin{array}{ll}
		u'=v,\\[3pt]
		v'=-\lambda u-a(x)u^p,
	\end{array}
	\right.
\end{equation}
in the closed half-plane
\[
\mathscr{H}^+:=\{(u,v)\in {\mathbb R}^2:\; u\geq 0\}.
\]
So, depending on whether $a=1$ or $a=0$, we are lead to analyze one of the following autonomous planar systems
\begin{equation}
	\label{2.2}
	(\mathscr{N})\;\;\left\{
	\begin{array}{ll}
		u'=v,\\[3pt]
		v'=-\lambda u-u^p,
	\end{array}
	\right.\qquad\hbox{or} \qquad
	(\mathscr{L})\;\;\left\{
	\begin{array}{ll}
		u'=v,\\[3pt]
		v'=-\lambda u,
	\end{array}
	\right.
\end{equation}
respectively. Note that we are throughout assuming that $\l <\pi^2$, and that the dynamics of both systems, $(\mathscr{N})$ and $(\mathscr{L})$, change when considering $\l<0$ or $\l\in[0,\pi^2)$.
\par
\subsection{Symmetric solutions for $\l<0$}
\label{sec2.1}
\noindent Suppose $\lambda<0$. Then, the set of equilibria of $(\mathscr{N})$ consists of the origin, $(0,0)$,  plus $(\o,0)$, where $\o :=(-\l)^{\frac{1}{p-1}}$, and the total energy associated to $(\mathscr{N})$ is
\[
\mathscr{E}_{\mathscr{N}}(u,v):=\frac{1}{2}v^2+\frac{\lambda}{2}u^2+\frac{1}{p+1}u^{p+1}.
\]
Thus, $(0,0)$ is a half-right saddle point whose stable and unstable manifolds provide us with
an homoclinic connection of $(0,0)$ corresponding to the level set $\mathscr{E}_{\mathscr{N}}(u,v)=0$, surrounding the equilibrium $(\o,0)$, which is a local center. Note that the level set $\mathscr{E}_{\mathscr{N}}(u,v)=0$ crosses the $u$-axis at the point $(u_{\rm ho},0)$, where
\[
u_{\rm ho}:=\left(\frac{-\lambda(p+1)}{2}\right)^{\frac{1}{p-1}}.
\]
Thus, $(u_{\rm ho},0)$ is the crossing point of the homoclinic connection of the origin
with the positive $u$-axis. The phase portrait of
($\mathscr{N}$) has been sketched in the left figure of Figure \ref{Fig1}.

\begin{figure}[h!]
	\centering
	\includegraphics[scale=0.2]{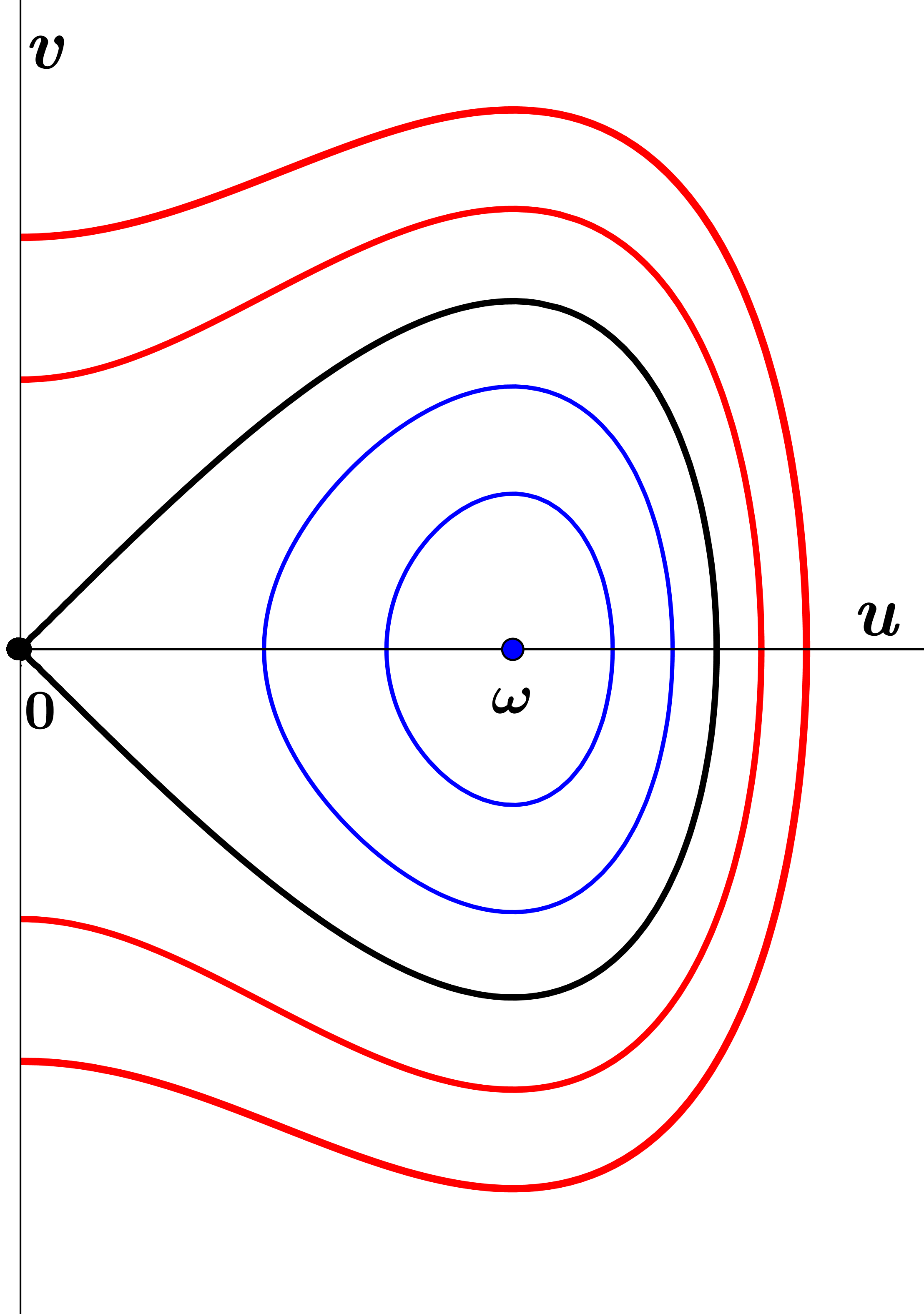}\hspace{2cm}
	\includegraphics[scale=0.2]{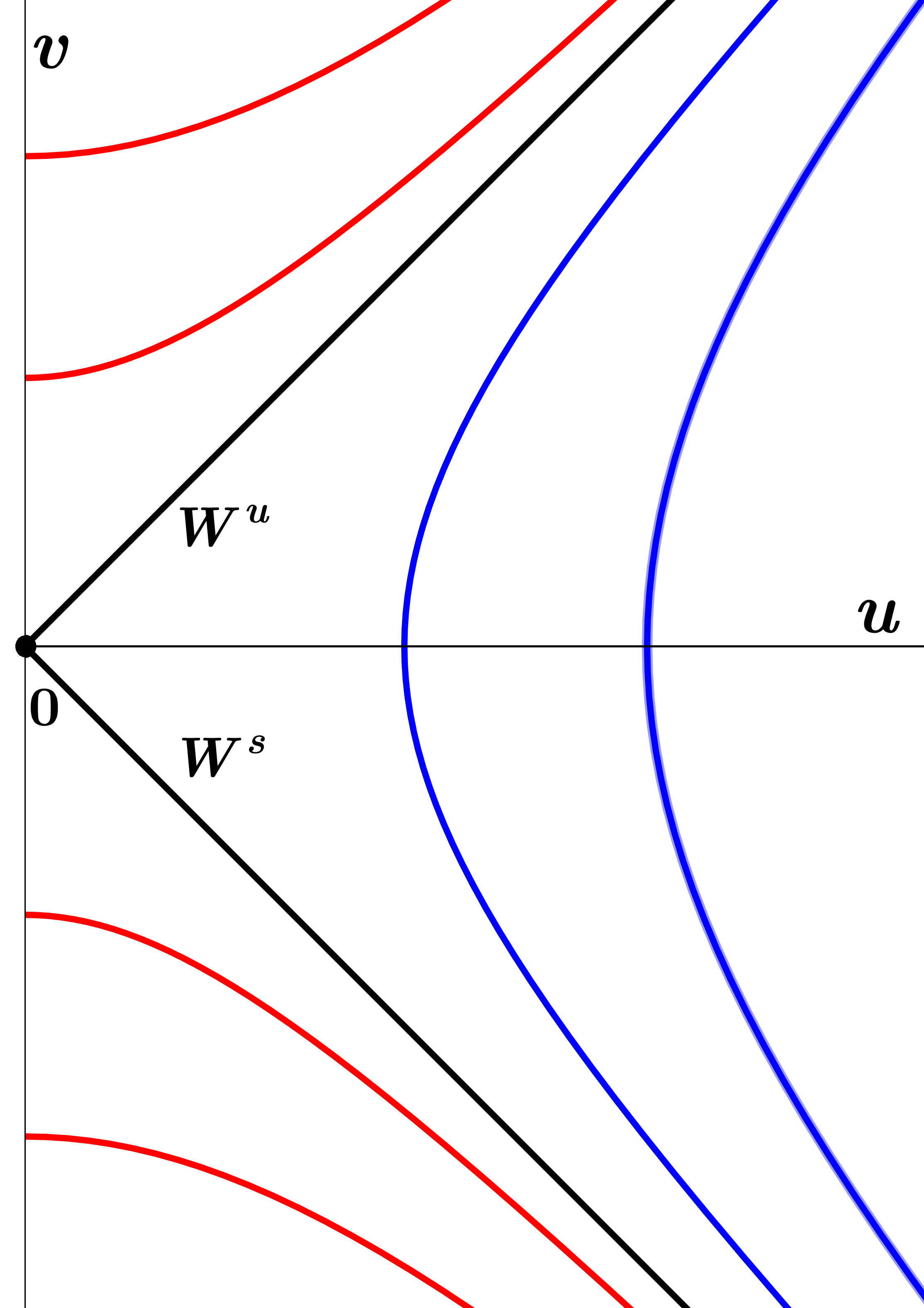}
	\caption{Phase plane diagrams of ($\mathscr{N}$) (left) and ($\mathscr{L}$) (right) when $\l<0$. Integral curves with positive energy are plotted in red, while those with zero energy are plotted
		in black and those with negative energy are colored in blue.}
	\label{Fig1}
\end{figure}
\par
By simply looking at the left picture of Figure \ref{Fig1}, it becomes apparent that, for every $u_0>u_{\rm ho}$, there exists
$$
T:=T_{\mathscr{N}}(\l,u_0)>0
$$
such that the unique solution of ($\mathscr{N}$), $(u(t),v(t))$, satisfying
$$
(u(0),v(0))=(u_0,0),
$$
is defined in $[-T,T]$ and it satisfies
\begin{equation}
	\label{2.3}
	\mathscr{E}_{\mathscr{N}}(u(t),v(t))=\frac{1}{2}v^2(t)
	+\frac{\lambda}{2}u^2(t)+\frac{1}{p+1}u^{p+1}(t)=\frac{\lambda}{2}u_0^2+\frac{1}{p+1}u_0^{p+1}
\end{equation}
for all $t\in[-T,T]$. Moreover,
$$
(u(\pm T),v(\pm T))=(0,\mp v_0),\quad   v_0:= \sqrt{\l u_0^2 +\frac{2}{p+1}u_0^{p+1}},
$$
and a standard  calculation from \eqref{2.3} yields
\begin{equation}
	\label{2.4} T_{\mathscr{N}}(\l,u_0):=\int_0^1\frac{ds}{\sqrt{\l(1-s^2)+\frac{2}{p+1}u_0^{p-1}(1-s^{p+1})}}.
\end{equation}
Thanks to \eqref{2.4}, it is apparent that, for every $\l<0$,  $T_{\mathscr{N}}(\l,u_0)$ is decreasing with respect to $u_0$. Moreover,
\begin{equation}
	\label{2.5}
	\lim_{u_0\downarrow u_{\rm ho}}{T}_{\mathscr{N}}(\l,u_0)=+\infty, \qquad
	\lim_{u_0\uparrow +\infty}{T}_{\mathscr{N}}(\l,u_0)=0.
\end{equation}
\par
As far as ($\mathscr{L}$) is concerned, it is a linear system whose unique equilibrium is the origin, $(0,0)$,  which is a saddle point if $\l<0$. The unstable and stable manifolds, $W^u$ and $W^s$,  are the straight lines
\[
W^u:\;v=\sqrt{-\l}\,u,\qquad W^s:\;v=-\sqrt{-\l}\,u,
\]
respectively, as sketched in the right picture of Figure \ref{Fig1}. The total energy associated to the linear system ($\mathscr{L}$) is
\begin{equation*}
	\mathscr{E}_{\mathscr{L}}(u,v):=\frac{1}{2}v^2+\frac{\lambda}{2}u^2.
\end{equation*}
Thus,  the level set $\mathscr{E}_{\mathscr{L}}(u,v)=0$ consists of $W^u\cup W^s$. Moreover, for every $u_+>0$, the necessary time to link $(u_+,0)$ with an arbitrary point
$$
(u_\ell,v_\ell)\in\left\{(u,v)\,:\,\mathscr{E}_{\mathscr{L}}(u,v)=\tfrac{\l}{2}u_+^2\right\}
$$
is given by
\begin{equation}
\label{2.6}
\begin{split}
{T}_{\mathscr{L}}(\l,u_+,u_\ell) & :=\int_{u_+}^{u_{\ell}}\frac{du}{\sqrt{\l(u_+^2-u^2)}} =  \frac{1}{\sqrt{-\l}}\int_{1}^{\frac{u_{\ell}}{u_+}} \frac{ds}{\sqrt{-1+s^2}}\\ &
=\frac{1}{\sqrt{-\l}}\ln\Big(\tfrac{u_\ell}{u_+}+\sqrt{\Big(\tfrac{u_\ell}{u_+}\Big)^2-1}\,\Big),
\end{split}
\end{equation}
because the function
$$
g(s):=\ln(s+\sqrt{s^2-1}), \qquad s>1,
$$
satisfies $g'(s)=\frac{1}{\sqrt{s^2-1}}$ for all $s>1$.
\par
Consequently, to construct a symmetric positive solution, say $u$, of  \eqref{1.1},  we should have
$$
u'(0.5)=0,\quad u(0.5)=u_+,
$$
for some $u_+>0$, and should proceed as sketched next:
\par
\vspace{0.2cm}
\noindent \emph{Step 1:} Since $g'(s)>0$ for all $s>1$, the function $g$ is invertible. Thus, we should consider the set of points $(u_\ell,v_\ell)$ in the level energy $\mathscr{E}_{\mathscr{L}}(u,v)=\frac{\l}{2}u_+^2$ such that
\[
\frac{u_\ell}{u_+}=g^{-1}\left(\frac{h}{2}\sqrt{-\l}\right)=:C>1,
\]
because, for this choice, \eqref{2.6} guarantees that
$$
{T}_{\mathscr{L}}(\l,u_+,u_\ell)=\frac{h}{2}.
$$
Note that
$$
\frac{u_\ell}{C}=u_+=\sqrt{\frac{v_\ell^2}{\l}+u_\ell^2}
$$
implies
$$
v_\ell =\pm \sqrt{-\l(1-C^{-2}})\,u_\ell.
$$
Therefore, the points $(u_\ell,v_\ell)$ lying in the straight line
$$
v=-\sqrt{- \l (1-C^{-2})}\, u,
$$
need a time $\frac{h}{2}$ to reach $(u_+,0)$, and, by symmetry, they as well need another $\frac{h}{2}$ unities of time to link $(u_+,0)$ with
$$
(u_\ell,\sqrt{-\l(1-C^{-2}})\,u_\ell)
$$
along the trajectory $\mathscr{E}_{\mathscr{L}}(u,v)=\frac{\l}{2}u_+^2$.
\par
\vspace{0.2cm}
\noindent \emph{Step 2:} According to Step 1, any positive solution of \eqref{1.1} starting at some point $(0,v_0)$, traveling along the trajectory  $\mathscr{E}_{\mathscr{N}}=\frac{v_0^2}{2}$ of the nonlinear system ($\mathscr{N}$) until reaching some point
$$
(u_\frac{1-h}{2},v_\frac{1-h}{2})\equiv (u_\ell,-\sqrt{-\l(1-C^{-2})}\,u_\ell)
$$
at $x=\frac{1-h}{2}$ provides us with a symmetric solution of \eqref{1.1}, as sketched in Figure \ref{Fig2}.

\begin{figure}[h!]
	\centering
	\includegraphics[scale=0.2]{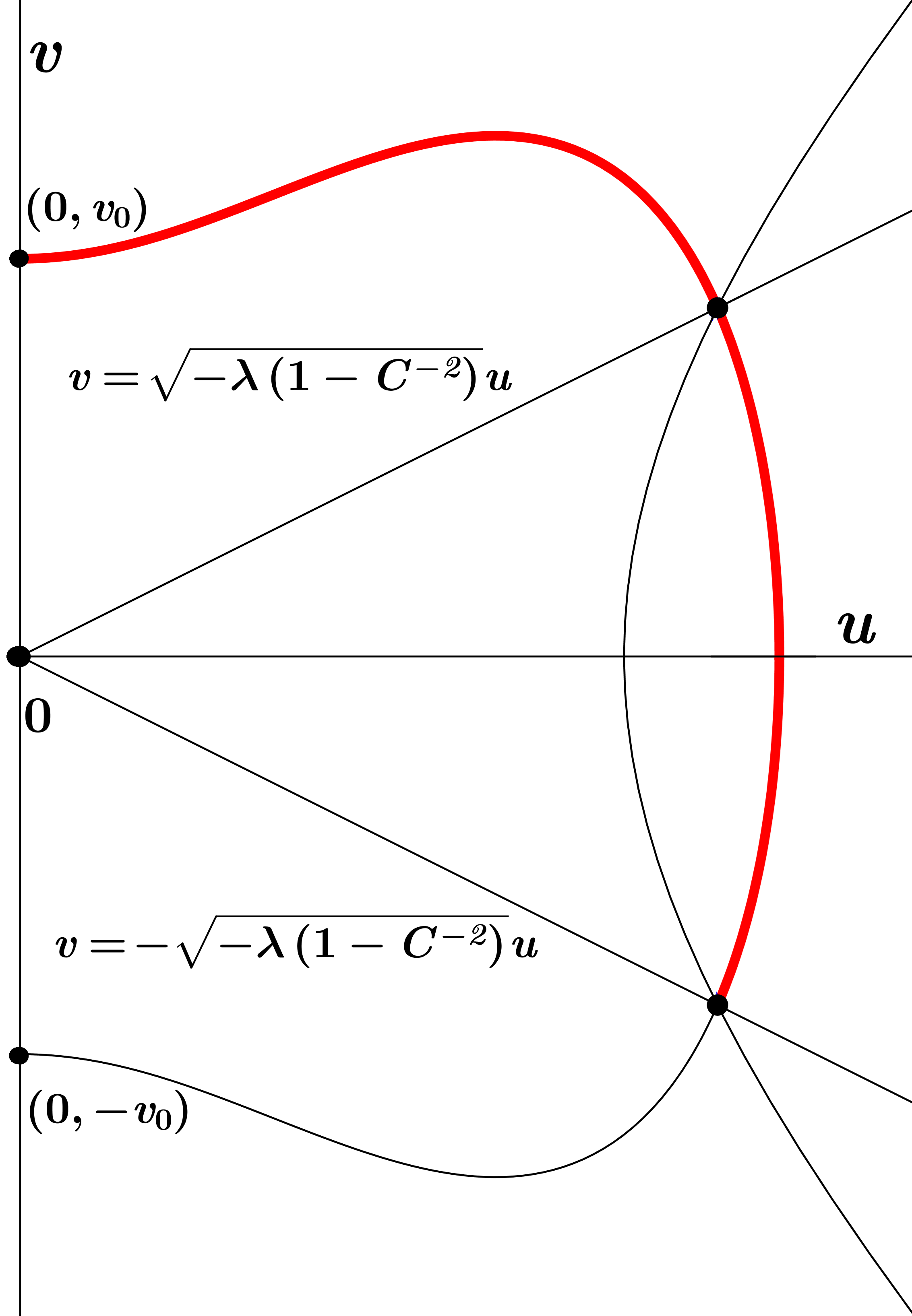}\;\;
	\includegraphics[scale=0.2]{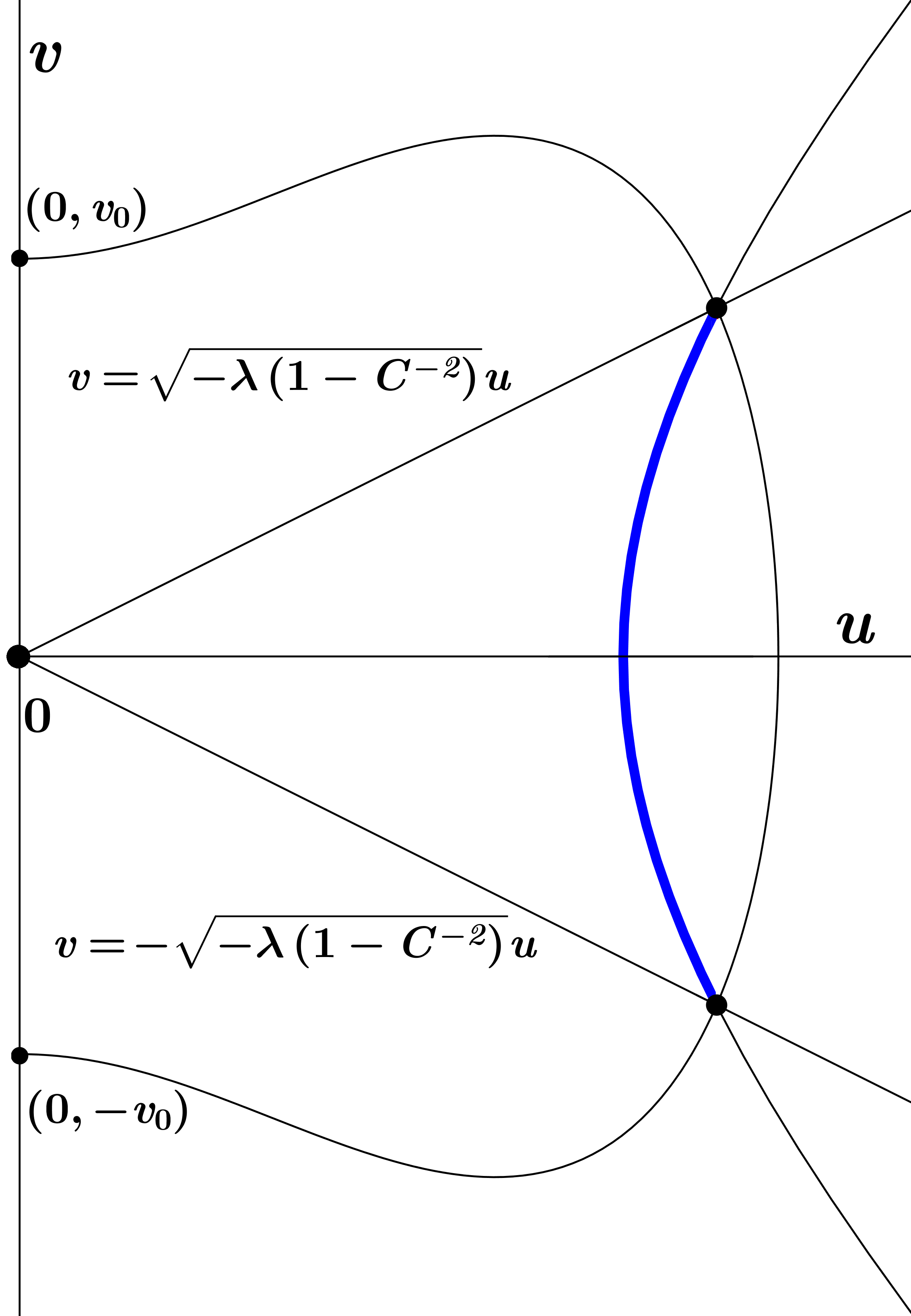}\;\;
	\includegraphics[scale=0.2]{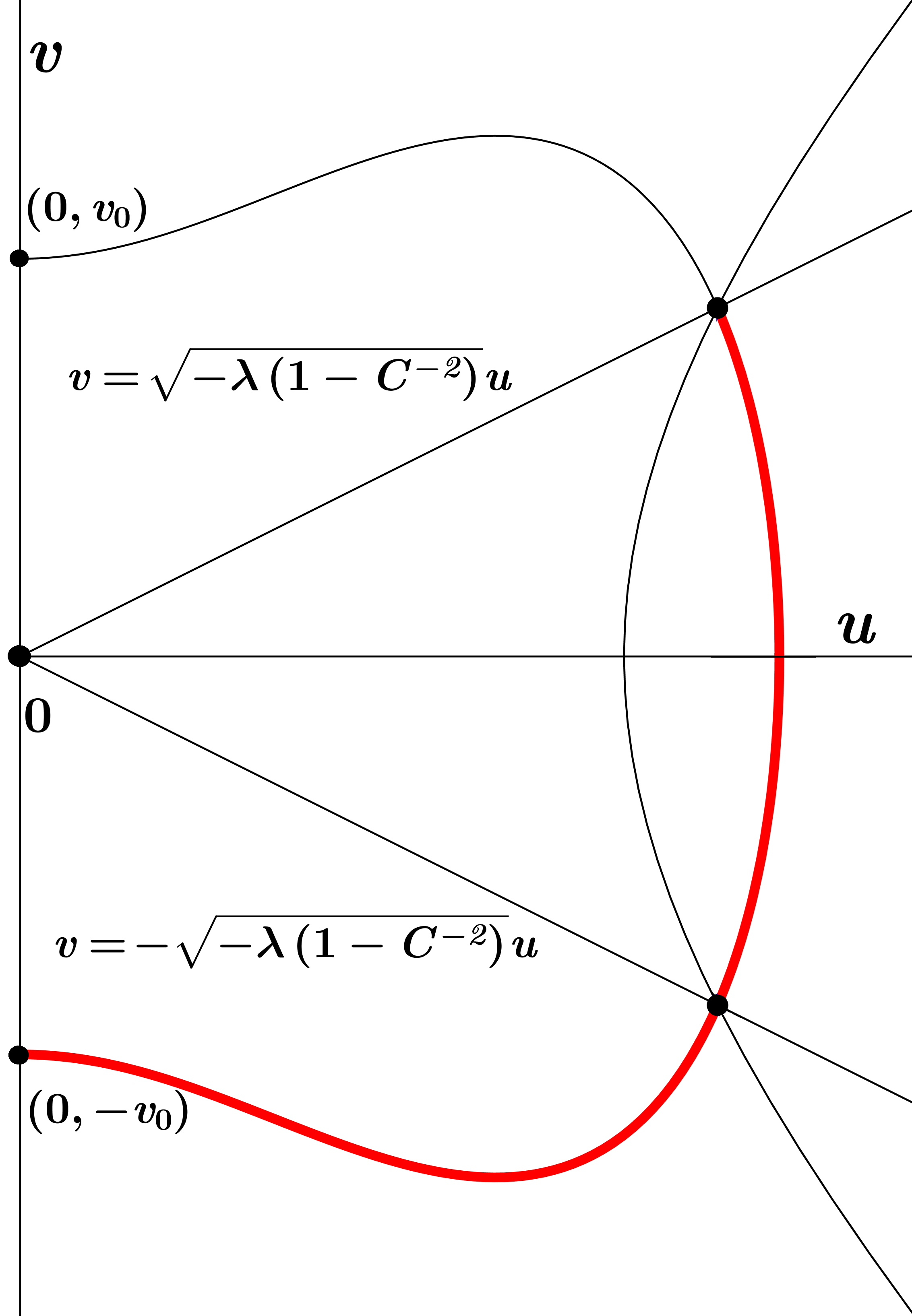}
	\caption{The phase plane route of a symmetric positive solution of \eqref{1.1} in the case $\l<0$: nonlinear interval $[0,\tfrac{1-h}{2}]$ (left), linear interval $(\tfrac{1-h}{2},\tfrac{1+h}{2})$ (center) and nonlinear interval $[\tfrac{1+h}{2},1]$ (right).}
	\label{Fig2}
\end{figure}

\noindent Indeed, such a solution leaves $(0,v_0)$ and follows the trajectory
$\mathscr{E}_{\mathscr{N}}=\frac{v_0^2}{2}$ until reaching $(u_\frac{1-h}{2},
v_\frac{1-h}{2})$ at $x=\frac{1-h}{2}$. Then, it follows the trajectory $\mathscr{E}_{\mathscr{L}}(u,v)=\frac{\l}{2}u_+^2$ of the linear system ($\mathscr{L}$)
until reaching $(u_\frac{1-h}{2},-v_\frac{1-h}{2})$ at $x= \frac{1+h}{2}$. Finally, it travels along the trajectory $\mathscr{E}_{\mathscr{N}}=\frac{v_0^2}{2}$
until reaching $(0,-v_0)$ at $x=1$, as sketched in the three plots of Figure \ref{Fig2}.
\par
\vspace{0.2cm}
\noindent \emph{Step 3:} Therefore, \eqref{1.1} has a symmetric positive solution as soon as there exists some $u_0>u_{\rm ho}$ such that
\begin{equation}
	\label{2.7}
	T_{\mathscr{N}}(\l,u_0) + {T}_{\mathscr{N}}(\l,u_0,u_\ell)=\frac{1-h}{2},
\end{equation}
where ${T}_{\mathscr{N}}(\l,u_0,u_\ell)$ denotes the necessary time to link $(u_0,0)$
with $(u_\ell,-v_\ell)$ along the trajectory \eqref{2.3} of ($\mathscr{N}$). According to \eqref{2.3}, it is easily seen that
\begin{equation}
	\label{2.8}
	T_{\mathscr{N}}(\l,u_0,u_\ell):=
	\int_\frac{u_\ell}{u_0}^1\frac{ds}{\sqrt{\l(1-s^2)+\frac{2}{p+1}u_0^{p-1}(1-s^{p+1})}}.
\end{equation}
Thus,
$$
\lim_{u_0\uparrow \infty}  T_{\mathscr{N}}(\l,u_0,u_\ell)=0.
$$
Consequently, we can infer from  \eqref{2.5} that there exists $u_0>u_{\rm ho}$ for which
\eqref{2.7} holds. So, for every $\l<0$, \eqref{1.1} admits, at least, one
symmetric positive solution. In Section 2.3, we will prove that, actually,
\eqref{1.1} has a unique symmetric positive solution. Let denote it by
$u\equiv u_\l$. According to the previous construction,
$$
\|u\|_\infty = u_0
$$
is reached at some points $x_0\in (0,\frac{1-h}{2})$ and
$x_1\in (\frac{1+h}{2},1)$ such that $x_0+x_1=1$. Moreover, by Theorem 2.1 of
Cubillos et al. \cite{CLGT-2024}, we already know that
\begin{equation}
	\label{2.9}
	\lim_{\l\downarrow -\infty}u_\l =0\;\; \hbox{uniformly on compact subsets of}\;\;
	(\tfrac{1-h}{2},\tfrac{1+h}{2}).
\end{equation}

\subsection{Symmetric solutions for $\l\geq 0$}
\label{sec2.2}
\noindent Throughout this section, we assume that $\l\geq0$. Then, the unique equilibrium of $(\mathscr N)$ is $(0,0)$, which is a half-right nonlinear center. Moreover, for every $u_0>0$, there exists $T:=T_{\mathscr{N}}(\l,u_0)>0$ such that the unique solution of ($\mathscr{N}$), $(u(t),v(t))$, such that $(u(0),v(0))=(u_0,0)$, is defined in $[-T,T]$, and it satisfies
\eqref{2.3} for all $t\in[-T,T]$. As in Section 2.1,
$$
(u(\pm T),v(\pm T))=(0,\mp v_0),\quad   v_0:=
\sqrt{\l u_0^2 +\tfrac{2}{p+1}u_0^{p+1}},
$$
and \eqref{2.4} holds. Thus, for every $\l\geq 0$,  $T_{\mathscr{N}}(\l,u_0)$ is decreasing with respect to $u_0$, though now, instead of \eqref{2.5}, we have that
\begin{equation}
	\label{2.10}
	\lim_{u_0\downarrow 0}{T}_{\mathscr{N}}(\l,u_0)=\frac{\pi}{2\sqrt{\l}}, \qquad
	\lim_{u_0\uparrow +\infty}{T}_{\mathscr{N}}(\l,u_0)=0,
\end{equation}
for all $\l>0$, and
\begin{equation}
	\label{2.11}
	\lim_{u_0\downarrow 0}{T}_{\mathscr{N}}(0,u_0)=\infty, \qquad
	\lim_{u_0\uparrow +\infty}{T}_{\mathscr{N}}(0,u_0)=0.
\end{equation}
\par
Regarding the linear system ($\mathscr{L}$), we must differentiate the cases $\l=0$ and $\l>0$. Indeed, when $\l=0$ all the energy lines are constant in $v$ and the $u$-axis of the phase plane consists of equilibria. Thus, in this case, a symmetric positive solution of \eqref{1.1} corresponds to a solution starting at some point $(0,v_0)$, with $v_0>0$, and running along the trajectory  $\mathscr{E}_{\mathscr{N}}=v_0^2/2$ until reaching some point $(u_0,0)$ at time $x=\frac{1-h}{2}$. Then, this solution should stay at the equilibrium $(u_0,0)$ for a time $h$ before completing the trajectory $\mathscr{E}_{\mathscr{N}}=v_0^2/2$ and reaching $(0,-v_0)$ at $x=1$. By the monotonicity of $T_{\mathscr{N}}(0,u_0)$ with respect to $u_0$, the existence and the uniqueness of the symmetric positive solution when $\l=0$ follows from \eqref{2.11}. These solutions are constant in the interval $(\frac{1-h}{2},\frac{1+h}{2})$.
\par
Next, we assume that $\l>0$. Then, instead of \eqref{2.2}, it is
more appropriate to consider the equivalent systems
\begin{equation}
	\label{2.12}
	(\mathscr{N}_\mathrm{eq})\;\; \left\{
	\begin{array}{ll}
		u'=\sqrt{\l}\,v,\\[3pt]
		v'=-\sqrt{\l}\,u-\frac{1}{\sqrt{\l}}\, u^{p},
	\end{array}
	\right.\qquad
	(\mathscr{L}_\mathrm{eq})\;\; \left\{
	\begin{array}{ll}
		u'=\sqrt{\l}\,v,\\[3pt]
		v'=-\sqrt{\l}\,u,
	\end{array}
	\right.
\end{equation}
whose associated total energies are,
$$
\mathscr{E}_{\mathscr{N}_{\rm eq}}(u,v):=\sqrt{\l}\left( \frac{1}{2}v^2+\frac{1}{2}u^2+\frac{1}{\l(p+1)}u^{p+1}\right)
$$
and
$$
\mathscr{E}_{\mathscr{L}_{\rm eq}}(u,v):=\sqrt{\l}\left(\frac{1}{2}v^2+ \frac{1}{2}u^2\right),
$$
respectively. For $(\mathscr{L}_\mathrm{eq})$ in \eqref{2.12}, the angular variable in polar coordinates, $\t(x)$, satisfies
$$
\t'(x)=\frac{v'(x)u(x)-u'(x)v(x)}{u^2(x)+v^2(x)}=-\sqrt{\l}
$$
and hence, the angle run, clockwise,  by a periodic solution in an interval of length $h$ is
$$
\t=\int_0^h\t'(x)\,dx = -h\sqrt{\l}.
$$
Thus, when $\l>0$, a symmetric positive solution of \eqref{1.1} should begin at some point $(0,v_0)$ and reach at time $x=(1-h)/2$, through the level set
\begin{equation}
	\label{ii.13}
	\mathscr{E}_{\mathscr{N}_{\rm eq}}(u,v)=\sqrt{\l}\frac{v_0^2}{2},
\end{equation}
the (unique) point $(u_\ell,v_\ell)$ lying on the line
\begin{equation}
	\label{ii.14}
	v=\Big(\!\!\tan \tfrac{h\sqrt{\l}}{2}\Big)u.
\end{equation}
Then, it moves in a time $h$ an angle $-h\sqrt{\l}$ trough the level set
$$
\mathscr{E}_{\mathscr{L}_{\rm eq}}=\sqrt{\l}\left(\frac{1}{2}v_{\ell}^2+\frac{1}{2}u_{\ell}^2\right)
$$
up to reach the point  $(u_{\ell},-v_{\ell})$. Finally, it runs again along \eqref{ii.13} from $(u_{\ell},-v_{\ell})$ to $(0,-v_0)$, as it has been sketched in Figure \ref{Fig3}, where the phase plane route in the nonlinear (resp. linear) level set has been highlighted using red (resp. green) color.
Therefore, to prove the existence of a symmetric positive solution it suffices to show that $(0,v_0)$ can be linked in time $(1-h)/2$ to a point $(u_\ell,v_\ell)$ lying on the line \eqref{ii.14} through the integral curve
$$
v^2+u^2+\frac{2}{\l(p+1)}u^{p+1}=\Big(\!\!\tan \tfrac{h\sqrt{\l}}{2}\Big)^2u_\ell^2+u_\ell^2+\frac{2}{\l(p+1)}u_\ell^{p+1}.
$$
\begin{figure}[h!]
	\centering
	\includegraphics[scale=0.2]{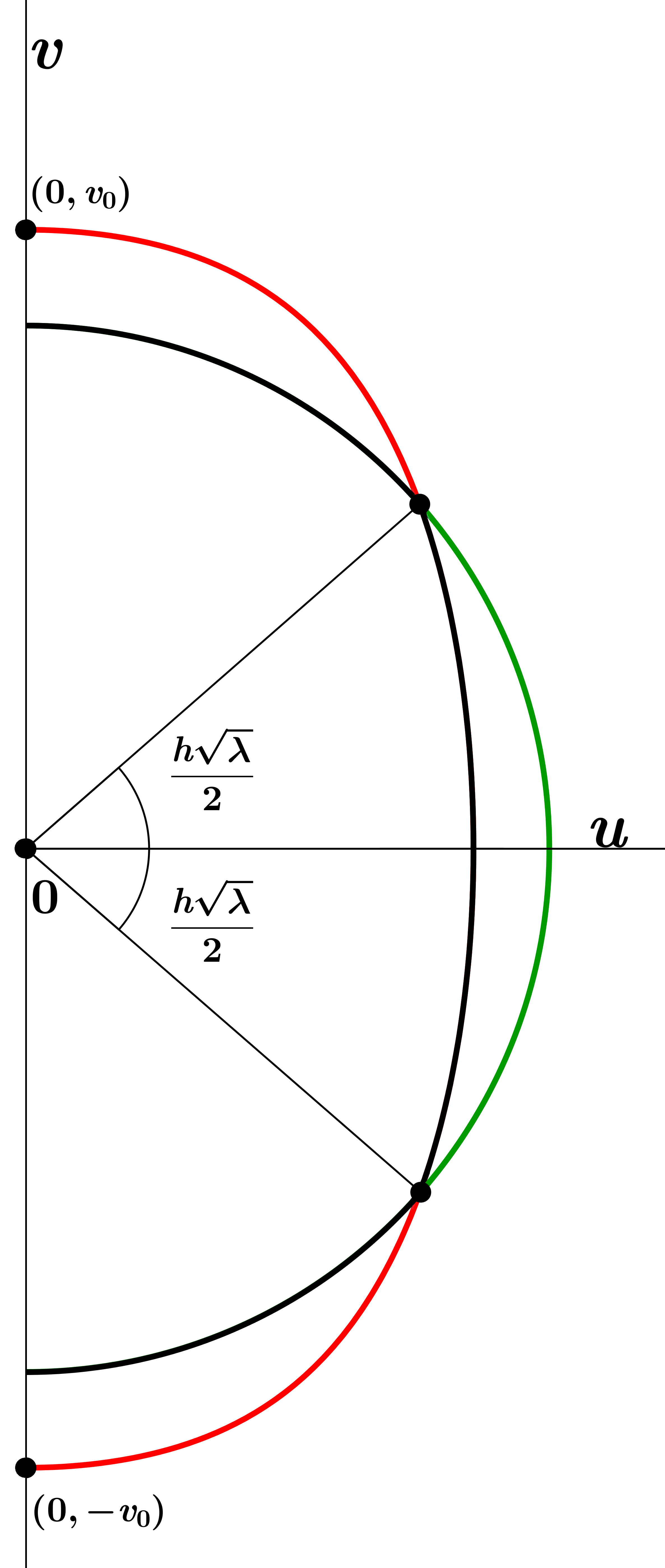}
	\caption{A symmetric positive solution of \eqref{1.1} in the case $\l\in(0,\pi^2)$.}
	\label{Fig3}
\end{figure}
Note that the necessary time to link $(0,v_0)$ and $(u_\ell,v_\ell)$ along this integral curve
is given by
\begin{equation}
	\label{ii.15}
	T_{\mathscr{N}}(\l,0,u_\ell):=\frac{1}{\sqrt{\l}}\int_0^1\frac{ds}{\sqrt{\tan^2 \tfrac{h\sqrt{\l}}{2}+1-s^2+\frac{2u_\ell^{p-1}}{\l(p+1)}(1-s^{p+1})}},
\end{equation}
where we have used that $u'=\sqrt{\l}v$. Thus, since $T_{\mathscr{N}}(\l,0,u_\ell)$ is decreasing with respect to $u_\ell$ and
$$
\lim_{u_\ell\uparrow +\infty}{T}_{\mathscr{N}}(\l,0,u_\ell)=0,
$$
the existence, and the uniqueness, of a symmetric positive solution of \eqref{1.1} in case $\l>0$ is guaranteed as soon as
\begin{equation}
	\label{ii.16}
	\lim_{u_\ell\downarrow 0}{T}_{\mathscr{N}}(\l,0,u_\ell)= \frac{1}{2} \Big( \frac{\pi}{\sqrt{\l}}-h\Big)>\frac{1-h}{2},
\end{equation}
which holds true if, and only if, $\l<\pi^2$. To show the validity of the limit in \eqref{ii.16}, we set
$$
\a:=\tan^2 \tfrac{h\sqrt{\l}}{2}+1=\sec^2 \tfrac{h\sqrt{\l}}{2}.
$$
Then,
\begin{equation*}
	\begin{split}
		\lim_{u_\ell\downarrow 0}{T}_{\mathscr{N}}(\l,0,u_\ell)&=\frac{1}{\sqrt{\l}}\int_0^1\frac{ds}{\sqrt{\tan^2\tfrac{h\sqrt{\l}}{2}
				+1-s^2}}=\frac{1}{\sqrt{\l}}\int_0^{\frac{1}{\sqrt{\a}}}\frac{d\t}{\sqrt{1-\t^2}}\\&=
		-\frac{1}{\sqrt{\l}}\Big( \arccos \cos  \tfrac{h\sqrt{\l}}{2}-\frac{\pi}{2}\Big) = \frac{1}{2}\Big( \frac{\pi}{\sqrt{\l}}-h\Big),
	\end{split}
\end{equation*}
which provides us with \eqref{ii.16}. Summarizing, by the monotonicity of $T_{\mathscr{N}}(\l,0,u_\ell)$ with respect to $u_\ell$, for every $\l\in(0,\pi^2)$, \eqref{1.1} has a unique symmetric positive solution.
\par
Finally, note that condition $\l<\pi^2$ is actually necessary for the existence of a positive solution of
\eqref{1.1}. Indeed, if \eqref{1.1} has a positive solution, $u$, then multiplying the differential equation
by $\sin (\pi x)$ and integrating by parts in $(0,1)$ yields
\begin{align*}
	\pi^2\int_0^1\sin(\pi x)u(x)\,dx & =\l \int_0^1\sin(\pi x)u(x)\,dx+ \int_0^1 a(x)\sin(\pi x)u^{p}(x)\,dx\\ &   > \l \int_0^1\sin(\pi x)u(x)\,dx
\end{align*}
Thus, $\l<\pi^2$, because $u(x)>0$ for all $x\in (0,1)$ and $a\gneq 0$.

%

\subsection{Uniqueness of the symmetric positive solution}

\noindent The next result establishes the uniqueness of the symmetric positive solutions of \eqref{1.1}. Note that, in case $\l\in[0,\pi^2)$, this result  has been already established in Section \ref{sec2.2}

\begin{theorem}
	\label{th2.1}
	For every  $\l\in (-\infty,\pi^2)$, the problem \eqref{1.1} has a unique symmetric positive solution.
	Thus, it has a positive solution if, and only if, $\l<\pi^2$.
\end{theorem}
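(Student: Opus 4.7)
The plan is to reduce to the proof of uniqueness for $\l<0$. Indeed, the case $\l\in[0,\pi^2)$ is fully covered by the construction of Section \ref{sec2.2}: the symmetric positive solution is given by the unique $u_\ell$ solving $T_{\mathscr{N}}(\l,0,u_\ell)=\tfrac{1-h}{2}$, and such a $u_\ell$ exists and is unique thanks to the strict monotonicity of $T_{\mathscr{N}}(\l,0,\cdot)$ together with \eqref{ii.16}. The necessity ``$u$ positive solution $\Rightarrow$ $\l<\pi^2$'' was also proved at the end of Section \ref{sec2.2} by testing \eqref{1.1} against $\sin(\pi x)$. For $\l<0$, existence was already established in Section \ref{sec2.1} through \eqref{2.5} together with the fact that $T_{\mathscr{N}}(\l,u_0,u_\ell)\to 0$ as $u_0\uparrow\infty$, so only uniqueness remains to be addressed.

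For $\l<0$, I would parameterize the candidate symmetric positive solutions by $u_0 := \|u\|_\infty \in (u_{\rm ho},\infty)$. Each such $u_0$ determines a unique trajectory of $(\mathscr{N})$ of energy $E(u_0) = \tfrac{\l}{2}u_0^2 + \tfrac{u_0^{p+1}}{p+1}>0$, and the linear matching of Step 1 of Section \ref{sec2.1} singles out on this trajectory a unique point $(u_\ell(u_0),-v_\ell)$ lying in the fourth quadrant on the line $v=-\sqrt{-\l(1-C^{-2})}\,u$, where $C:=g^{-1}(\tfrac{h}{2}\sqrt{-\l})>1$ depends only on $\l$ and $h$. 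A direct inspection of the phase portrait shows that $u_\ell:(u_{\rm ho},\infty)\to(0,\infty)$ is a smooth, strictly increasing bijection with $u_\ell(u_0)/u_0\to 1$ as $u_0\uparrow\infty$. By Step 3 of Section \ref{sec2.1}, the symmetric positive solutions are in one-to-one correspondence with the zeros on $(u_{\rm ho},\infty)$ of
$$
\Phi(u_0) := T_{\mathscr{N}}(\l,u_0) + T_{\mathscr{N}}(\l,u_0,u_\ell(u_0)) - \tfrac{1-h}{2},
$$
and \eqref{2.5} already implies $\Phi(u_{\rm ho}^+)=+\infty$ and $\Phi(+\infty)=-\tfrac{1-h}{2}<0$. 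So the whole task reduces to proving that $\Phi$ is strictly decreasing.

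The first summand is strictly decreasing in $u_0$ by the monotonicity of $T_{\mathscr{N}}(\l,\cdot)$ established in Section \ref{sec2.1}. The main obstacle is the monotonicity of the second summand, because in the representation \eqref{2.8} both the lower bound $u_\ell(u_0)/u_0$ and the coefficient $u_0^{p-1}$ in the denominator vary simultaneously with $u_0$. Nevertheless, both quantities are strictly increasing in $u_0$: the first effect strictly shrinks the integration interval while the second strictly decreases the integrand pointwise, so both push $T_{\mathscr{N}}(\l,u_0,u_\ell(u_0))$ strictly downwards. A sharp comparison of the integrands corresponding to two distinct values of $u_0$, in the same spirit as the one used to derive $\partial_{u_0}T_{\mathscr{N}}(\l,u_0)<0$ in Section \ref{sec2.1}, makes this decrease quantitatively strict. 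Combining this with the boundary values of $\Phi$ yields a single zero, hence a unique symmetric positive solution; the ``if and only if'' assertion of the theorem then follows at once from the necessity $\l<\pi^2$ recalled at the beginning of the proof.
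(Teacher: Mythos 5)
Your argument is correct in outline but follows a genuinely different route from the paper. The paper proves uniqueness for $\l<0$ by contradiction: given two symmetric positive solutions with $u_1'(0)>u_2'(0)$, it first shows through energy comparisons in the phase plane (plus a Sturm-type argument in the linear zone) that $w=u_1-u_2>0$ on all of $(0,1)$, and then rewrites the equation for $w$ as a linear eigenvalue problem, so that $\l$ must equal the principal eigenvalue of $-D^2-a(x)p\int_0^1(\xi u_1+(1-\xi)u_2)^{p-1}\,d\xi$; strict monotonicity of the principal eigenvalue with respect to the potential then yields $\l<\s_1[-D^2-a(x)u_1^{p-1}]=\l$, a contradiction. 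You instead show that the total half-transit time $\Phi(u_0)$ is strictly decreasing on $(u_{\rm ho},\infty)$, so it crosses the level $0$ exactly once. Your route is more elementary (no spectral theory) and yields a little more, namely a monotone global parameterization of the symmetric solution by $\|u\|_\infty$; the paper's route avoids having to track how the matching point $u_\ell$ moves with $u_0$, which is precisely where your proof has to work hardest.

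That tracking is the one place where ``direct inspection of the phase portrait'' does not suffice, and it is the step your whole argument hinges on: if $r(u_0):=u_\ell(u_0)/u_0$ were \emph{decreasing}, the enlargement of the integration interval in \eqref{2.8} would compete with the pointwise decrease of the integrand and the sign of $\partial_{u_0}T_{\mathscr{N}}(\l,u_0,u_\ell(u_0))$ would be unclear. The claim is true, but it must be extracted from the matching identity (the single-solution version of \eqref{ii.19}),
\begin{equation*}
\frac{\l}{2C^2}u_\ell^2+\frac{1}{p+1}u_\ell^{p+1}=\frac{\l}{2}u_0^2+\frac{1}{p+1}u_0^{p+1},
\end{equation*}
which, after dividing by $u_0^{p+1}$ and setting $r=u_\ell/u_0\in(0,1)$ and $\mu:=-\l\,u_0^{1-p}>0$, becomes
\begin{equation*}
\frac{1-r^{p+1}}{p+1}=\frac{\mu}{2}\Big(1-\frac{r^2}{C^2}\Big).
\end{equation*}
Implicit differentiation gives $\partial_r$ of the left-minus-right side equal to $r\big(\tfrac{\mu}{C^2}-r^{p-1}\big)=\tfrac{r}{u_0^{p-1}}\big(\tfrac{-\l}{C^2}-u_\ell^{p-1}\big)$, which is negative because $\v(u_\ell)>0$ forces $u_\ell^{p-1}>-\l/C^2$ (the paper's observation that $\v'>0$ wherever $\v>0$), while $\partial_\mu$ of the same quantity is $-\tfrac12(1-r^2/C^2)<0$ since $r<1<C$. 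Hence $dr/d\mu<0$, and since $\mu$ is strictly decreasing in $u_0$ (because $p>1$), $r$ is strictly increasing in $u_0$, as you claimed. With this lemma supplied, your proof closes; without it, the monotonicity of $\Phi$ is unsupported.
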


\begin{proof}
	It suffices to prove the result for $\l<0$. So, suppose $\l<0$. Arguing by contradiction, assume that $u_1$ and $u_2$ are two symmetric positive solutions of \eqref{1.1} and set
	$$
	v_1=u_1',\quad v_2=u_2'.
	$$
	Since $u_1(0)=u_2(0)=0$, by the uniqueness of the underlying Cauchy problem,
	$v_1(0)\neq v_2(0)$. By inter-exchanging their names, if necessary, we can assume that $v_1(0)>v_2(0)$. Then,
	$$
	w:= u_1-u_2
	$$
	satisfies $w(0)=0$ and $w'(0)>0$. Thus, there exists $\eta\in (0,1]$ such that $w(x)>0$ for all $x\in (0,\eta)$, and $w(\eta)=0$. We claim that $\eta=1$, i.e.
	\begin{equation}
		\label{ii.17}
		w(x)>0\quad \hbox{for all}\;\; x\in (0,1).
	\end{equation}
	Indeed, since \eqref{1.1} is autonomous in $[0,\frac{1-h}{2}]$, $u_1$ and $u_2$  lie in different energy levels there in and, in particular, $v_1(0)>v_2(0)>0$. Thus, since
	$$
	\mathscr{E}_{\mathscr{N}}(0,v_1(0))>\mathscr{E}_{\mathscr{N}}(0,v_2(0)),
	$$
	it becomes apparent that, for every $x\in [0,\frac{1-h}{2}]$,
	\begin{equation}
		\label{ii.18}
		\mathscr{E}_{\mathscr{N}}(u_1(x),v_1(x))\!=\!\mathscr{E}_{\mathscr{N}}(0,v_1(0))  \!>\!\mathscr{E}_{\mathscr{N}}(u_2(x),v_2(x))\!=\!\mathscr{E}_{\mathscr{N}}(0,v_2(0))\!>\!0.
	\end{equation}
	Moreover, as sketched in Figure \ref{Fig2}, since $u_1$ and $u_2$ are symmetric positive solutions, $u_1(\frac{1-h}{2})$ and $u_2(\frac{1-h}{2})$ should lie in the straight line
	$$
	v=-\sqrt{-\l(1-C^{-2})}u.
	$$
	Consequently, substituting in \eqref{ii.18}, we find that
	\begin{equation}
		\label{ii.19}
		\begin{split}
			 \mathscr{E}_{\mathscr{N}}(u_1,v_1)&=\frac{-\l(1-C^{-2})}{2}u_1^2(\tfrac{1-h}{2})+\frac{\l}{2}u_1^2(\tfrac{1-h}{2})
			 +\frac{1}{p+1}u_1^{p+1}(\tfrac{1-h}{2})\\&=\frac{\l}{2C^2}u_1^2(\tfrac{1-h}{2})+\frac{1}{p+1}u_1^{p+1}
			(\tfrac{1-h}{2})\\&>\mathscr{E}_{\mathscr{N}}(u_2,v_2)=\frac{\l}{2C^2}u_2^2(\tfrac{1-h}{2})
			+\frac{1}{p+1}u_2^{p+1}(\tfrac{1-h}{2})>0.
		\end{split}
	\end{equation}
	Thus, since the function
	$$
	\varphi(s):=\frac{\l s^2}{2C^2}+\frac{s^{p+1}}{p+1},\qquad s>0,
	$$
	is increasing if $\v(s)>0$, because, in this case,  $\varphi'(s)=\frac{\l s}{C^2}+s^p>0$ for all $s>0$, \eqref{ii.19} implies that
	$$
	w(\tfrac{1-h}{2})=u_1(\tfrac{1-h}{2})-u_2(\tfrac{1-h}{2})>0.
	$$
	In particular, $\eta\neq \frac{1-h}{2}$. Suppose that $\eta \in (0,\frac{1-h}{2})$. Then, since
	since $w(x)>0$ for all $x\in(0,\eta)$ and $w(\eta)=0$, necessarily, $w'(\eta)<0$. Moreover,
	since $w(\frac{1-h}{2})>0$, there exists $\d\in(\eta,\frac{1-h}{2})$ such that
	$$
	w(x)<0\;\;\hbox{for all}\;\; x\in(\eta,\d),\;\;w(\d)=0,\;\;\hbox{and}\;\; w'(\d)>0.
	$$
	On the other hand, by \eqref{ii.18}, we already know that, for every $x\in[0,\tfrac{1-h}{2}]$,
	\begin{equation}
		\label{ii.20}
		\begin{split}
			\mathscr{E}_{\mathscr{N}}& (u_1(x),v_1(x))=\frac{1}{2}v_1^2(x)+\frac{\l}{2}u_1^2(x)+\frac{1}{p+1}u_1^{p+1}(x)\\&>\mathscr{E}_{\mathscr{N}}(u_2(x),v_2(x))=\frac{1}{2}v_2^2(x)+\frac{\l}{2}u_2^2(x)+\frac{1}{p+1}u_2^{p+1}(x)>0.
		\end{split}
	\end{equation}
	Consequently, since
	$$
	w(\eta)=u_1(\eta)-u_2(\eta) = w(\d)=u_1(\d)-u_2(\d)=0,
	$$
	it follows from \eqref{ii.20} that
	$$
	v_1^2(\eta)>v_2^2(\eta),\quad v_1^2(\d)>v_2^2(\d),
	$$
	or, equivalently,
	$$
	|v_1(\eta)|>|v_2(\eta)|,\quad |v_1(\d)|>|v_2(\d)|.
	$$
	Moreover, since $w'(\eta)<0$, we have that
	\[
	0>w'(\eta)=v_1(\eta)-v_2(\eta)\;\; \hbox{and}\;\; |v_1(\eta)|>|v_2(\eta)|.
	\]
	So, $v_1(\eta)<0$. By the clockwise rotation sense in the phase plane, this entails  that $v_1(x)<0$ for all $x\in[\eta,\tfrac{1-h}{2}]$. Similarly, since
	\[
	0<w'(\d)=v_1(\d)-v_2(\d)\;\; \hbox{and}\;\; |v_1(\d)|>|v_2(\d)|,
	\]
	we can infer that $v_1(\d)>0$, which is a contradiction. Therefore,
	$$
	w(x)=u_1(x)-u_2(x)>0\;\; \hbox{for all}\;\; x\in(0,\tfrac{1-h}{2}].
	$$
	By the symmetry of the solution, we also have that
	$$
	w(x)=u_1(x)-u_2(x)>0\;\; \hbox{for all}\;\; x\in[\tfrac{1+h}{2},1).
	$$
	Finally, since $w(\tfrac{1-h}{2})>0$ and $w(\tfrac{1+h}{2})>0$, if $w$ changes sign in $(\tfrac{1-h}{2},\tfrac{1+h}{2})$, then there would exist a subinterval $[\a,\b]\subsetneq(\tfrac{1-h}{2},\tfrac{1+h}{2})$ such that $w(x)<0$ for all $x\in (\a,\b)$, and
	\begin{equation}
		\label{ii.21}
		\left\{
		\begin{array}{l}
			-w''=\l w\qquad \hbox{in}\; [\a,\b],\\[1pt]
			w(\a)=0=w(\b).
		\end{array}
		\right.
	\end{equation}
	Consequently,  $w_{|[\a,\b]}$ is an eigenfunction with associated eigenvalue
	$$
	\l=\pi^2/(\b-\a)^2>0,
	$$
	which contradicts our assumption that $\l<0$.  Therefore, \eqref{ii.17} holds.
	\par
	Subsequently, we fix  $\l<0$ and denote by $D^2$ the differential operator $\frac{d^2}{dx^2}$.
	Then, $w:=u_1-u_2$ satisfies
	\begin{equation}
		\label{ii.22}
		\left\{
		\begin{array}{ll}
			-D^2w-a(x)(u_1^p-u_2^p)=\l w,\quad x\in [0,1],\\[2pt]
			w(0)=0=w(1).
		\end{array}
		\right.
	\end{equation}
	Setting
	$$
	\psi(\xi):=(\xi u_1+(1-\xi)u_2)^p,\qquad \xi\geq 0,
	$$
	it is apparent that
	\begin{align*}
		u_1^p-u_2^p& =\psi(1)-\psi(0)=\int_0^1\psi'(\xi)\,d\xi
		\\ & =p\int_0^1(\xi u_1+(1-\xi)u_2)^{p-1}\, d\xi\,(u_1-u_2).
	\end{align*}
	Thus, the problem \eqref{ii.22} can be equivalently written as
	\begin{equation}
		\label{ii.23}
		\left\{
		\begin{array}{ll}
			\Big(-D^2-a(x)p\int_0^1(\xi u_1+(1-\xi)u_2)^{p-1}\, d\xi\Big)w=\l w,\quad x\in(0,1),\\[6pt]
			w(0)=0=w(1).
		\end{array}
		\right.
	\end{equation}
	Thus, since, thanks to \eqref{ii.17},  $w>0$ in $(0,1)$, it becomes apparent that
	\begin{equation}
		\label{ii.24}
		\l=\s_1\Big[-D^2-a(x)p\int_0^1(\xi u_1+(1-\xi)u_2)^{p-1}\, d\xi\Big],
	\end{equation}
	where $\s_1[-D^2+b(x)]$ denotes the principal eigenvalue of $-D^2+b(x)$ under
	Dirichlet boundary conditions. Hence, by the monotonicity of the principal eigenvalue with respect to the potential (see, if necessary, \cite{LG-2013}),  it follows from \eqref{ii.24} that
	\[
	\l <\s_1[-D^2-a(x)u_1^{p-1}]=\l,
	\]
	which is impossible. Therefore,  $w\equiv0$, which ends the proof.
\end{proof}

\subsection{Structure of the symmetric positive solutions}

\noindent According to, e.g., Amann and L\'opez-G\'omez \cite{ALG},
or adapting the arguments of Lemma 2.3 and Theorem 2.4 of the authors in \cite{LGMHZ-2023}, it becomes apparent that the set of non-trivial solutions of the problem
\begin{equation}
	\label{ii.25}
	\left\{
	\begin{array}{ll}
		-u''=\l u+a(x)u^p,\quad x\in(0,\frac{1}{2}),\\[3pt]
		u(0)=u'(\frac{1}{2})=0,
	\end{array}
	\right.
\end{equation}
contains a component of positive solutions bifurcating from $\l=\pi^2$, denoted by
$\mathscr{C}^+_{0.5}$, such that
$$
\mc{P}_\l ( \mathscr{C}^+_{0.5} ) =(-\infty,\pi^2].
$$
As there is a canonical one-one correspondence between the positive solutions of \eqref{ii.25} and the symmetric positive solutions of \eqref{1.1}, we can infer that there exists a component of symmetric positive solutions of \eqref{1.1},  denoted by $\mathscr{C}_{h,s}^+$, such that
\begin{equation}
	\label{ii.26}
	(\pi^2,0)\in \mathscr{C}_{h,s}^+\;\;\hbox{and}\;\; \mc{P}_\l (\mathscr{C}_{h,s}^+)=(-\infty,\pi^2].
\end{equation}
Thus, thanks to Theorem \ref{th2.1}, the next result holds.

\begin{theorem}
	\label{th2.2}
	Besides \eqref{ii.26}, the component $\mathscr{C}_{h,s}^+$ satisfies the following properties:
	\begin{enumerate}
		\item[{\rm (a)}] For every $(\l,u_\l)\in\mathscr{C}_{h,s}^+$, either $\l=\pi^2$ and $u_\l=0$, or $\l<\pi^2$ and $u_\l$ is the unique symmetric positive solution of \eqref{1.1}.
		
		\item[{\rm (b)}] Subsequently, for every     $\l<\pi^2$, we denote by $u_\l$ the unique symmetric positive solution of \eqref{1.1}. Then, for every $\s \in(-\infty,\pi^2]$,
		\begin{equation}
			\label{ii.27}
			\lim_{\l\to \s}\|u_\l\|_{\infty}=0\;\;\hbox{if and only if} \;\; \s=\pi^2.
		\end{equation}
		
		\item[{\rm (c)}] For each $\l_0\in (-\infty,\pi^2)$, the set of solutions
		$$
		\mathscr{D}_0:=\{u_\l :\l \in [\l_0,\pi^2)\}
		$$
		is bounded in $\mc{C}^1([0,1];\R)$, though
		\begin{equation}
			\label{ii.28}
			\lim_{\l\downarrow -\infty }\|u_\l\|_{\infty}=+\infty.
		\end{equation}
		
		\item[{\rm (d)}] The map $\l\mapsto u_\l$ is continuous. Therefore,
		$$
		\mathscr{C}_{h,s}^+=\{(\l,u_\l)\;:\; \l \in (-\infty,\pi^2]\}
		$$
		is a continuous curve, as illustrated in Figure \ref{Fig4}.
	\end{enumerate}
	Finally, by the uniqueness of the component of positive solutions bifurcating from
	$u=0$ at $\l=\pi^2$, $\mathscr{C}_{h,s}^+$ must be a subcomponent of the component of
	positive solutions $\mathscr{C}_{h}^+$ whose existence was established in Section \ref{sec1}.
\end{theorem}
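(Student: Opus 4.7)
The plan is to deduce all four items from Theorem \ref{th2.1} together with the phase-plane identities of Sections \ref{sec2.1}--\ref{sec2.2} and a standard compactness/bifurcation argument. Part (a) is immediate: every $(\lambda,u_\lambda)\in \mathscr{C}_{h,s}^+$ is a symmetric non-negative solution of \eqref{1.1} with $\lambda\leq \pi^2$; if $\lambda<\pi^2$, Theorem \ref{th2.1} forces $u_\lambda$ to be the unique symmetric positive solution, while for $\lambda=\pi^2$ the test by $\sin(\pi x)$ used at the end of Section \ref{sec2.2} excludes any positive solution, so $u_\lambda=0$.

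For part (b), the ``if'' half is supplied by the local bifurcation analysis at $(\pi^2,0)$ applied to the half-domain problem \eqref{ii.25}: a standard Crandall--Rabinowitz argument produces a $\mc{C}^1$ curve of small positive solutions emanating from $(\pi^2,0)$, and Theorem \ref{th2.1} identifies it with $\mathscr{C}_{h,s}^+$ near $\pi^2$. For the ``only if'' half, I argue by contradiction: if $\lambda_n\to \sigma<\pi^2$ with $\|u_{\lambda_n}\|_\infty\to 0$, normalising $w_n:=u_{\lambda_n}/\|u_{\lambda_n}\|_\infty$ yields
\[
-w_n''=\lambda_n w_n+a_h(x)\|u_{\lambda_n}\|_\infty^{p-1}w_n^p,\qquad w_n(0)=w_n(1)=0,\quad \|w_n\|_\infty=1.
\]
One-dimensional elliptic bootstrapping delivers a $\mc{C}^1$-convergent subsequence whose limit $w\gneq 0$ satisfies $-w''=\sigma w$ under Dirichlet conditions with $\|w\|_\infty=1$, forcing $\sigma=\pi^2$ and contradicting $\sigma<\pi^2$.

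Part (c) splits into two halves. The blow-up \eqref{ii.28} is read off directly from Section \ref{sec2.1}: for $\lambda<0$ the construction gives $\|u_\lambda\|_\infty=u_0>u_{\rm ho}=((-\lambda)(p+1)/2)^{1/(p-1)}$, and $u_{\rm ho}\to+\infty$ as $\lambda\downarrow-\infty$. The uniform bound on $\mathscr{D}_0$ will be argued by contradiction: if $\|u_{\lambda_n}\|_\infty\to\infty$ along $\lambda_n\to\lambda_\infty\in[\lambda_0,\pi^2]$, the strict monotonicity of $T_{\mathscr{N}}(\lambda,\cdot)$ together with the asymptotics \eqref{2.5}, \eqref{2.10}, \eqref{2.11} contradict the constraint that the travel times in the nonlinear and linear intervals sum, respectively, to $(1-h)/2$ and $h$; a $\mc{C}^1$-bound then follows from the energy identity \eqref{2.3}.

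Finally, part (d) is obtained by the same compactness scheme: for $\lambda_n\to \lambda_\infty\in(-\infty,\pi^2)$ the $\mc{C}^1$-bound from (c) and Arzelà--Ascoli yield a $\mc{C}^1$-convergent subsequence whose limit is a symmetric non-negative solution at $\lambda_\infty$, necessarily equal to $u_{\lambda_\infty}$ by Theorem \ref{th2.1}, so the full sequence converges; continuity at $\lambda=\pi^2$ is covered by (b). The concluding inclusion $\mathscr{C}_{h,s}^+\subset \mathscr{C}_h^+$ then follows because $\mathscr{C}_{h,s}^+$ is, by (d), a connected set of non-trivial non-negative solutions through the bifurcation point $(\pi^2,0)$, and $\mathscr{C}_h^+$ is \emph{the} component through that point, so maximality of the component forces the inclusion. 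The main obstacle I foresee is the uniform $\mc{C}^1$-bound in (c): although the phase-plane identities make $u_0$ a monotone continuous function of $\lambda$, the change of phase portrait at $\lambda=0$ and the degeneracies of \eqref{2.4} at $u_{\rm ho}$ and of \eqref{ii.15} as $u_\ell\downarrow 0$ require a careful case-by-case splitting of $[\lambda_0,\pi^2)$ into the sub-intervals $[\lambda_0,0]$, $[0,\pi^2-\varepsilon]$ and $[\pi^2-\varepsilon,\pi^2)$.
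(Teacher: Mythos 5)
Your proposal is correct, and for parts (a), (b), (d) and the final inclusion it follows essentially the same route as the paper: (a) is read off from Theorem \ref{th2.1} plus the non-existence of positive solutions at $\l=\pi^2$; (b) combines the bifurcation from $(\pi^2,0)$ with the standard normalization--compactness argument showing that $\pi^2$ is the only possible bifurcation value; (d) is the same uniqueness-plus-compactness scheme; and the inclusion $\mathscr{C}_{h,s}^+\subset\mathscr{C}_h^+$ is forced by maximality of the component through $(\pi^2,0)$, exactly as in the paper. The genuine differences are in part (c). For \eqref{ii.28} the paper evaluates the equation at an interior maximum $x_m$ (so $u''_\l(x_m)\le 0$ forces $\l+a(x_m)u_\l^{p-1}(x_m)\ge 0$, hence $a(x_m)=1$ and $\|u_\l\|_\infty\ge(-\l)^{1/(p-1)}$), an argument valid for \emph{any} positive solution; you instead read $\|u_\l\|_\infty=u_0>u_{\rm ho}=\bigl(\tfrac{-\l(p+1)}{2}\bigr)^{1/(p-1)}$ off the phase-plane construction of Section \ref{sec2.1}, which is equally valid (and gives a marginally better constant) but only applies to the symmetric solution --- which is all that is needed here. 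For the uniform bound on $\mathscr{D}_0$ the paper simply invokes the a priori bounds of \cite{ALG} and \cite{LGMHZ-2023}, whereas you propose a self-contained contradiction via the time maps: if $\|u_{\l_n}\|_\infty\to\infty$ then the explicit formulas \eqref{2.4} and \eqref{ii.15} show the travel times are $O\bigl(\|u_{\l_n}\|_\infty^{-(p-1)/2}\bigr)$ uniformly for $\l$ in compact subsets of $(-\infty,\pi^2]$, contradicting \eqref{2.7} (resp.\ its analogue for $\l\ge 0$). This is sound and buys independence from the cited literature, but it is only sketched; to make it complete you must carry out the case-splitting near $\l=0$ that you yourself flag, and verify the uniformity in $\l$ of the decay of the time maps (which does follow from bounding the integrands below by the $u_0^{p-1}(1-s^{p+1})$ term alone). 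The concluding $\mc{C}^1$-bound from the conserved energies on each of the three subintervals is fine.
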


\begin{proof}
	Part (a) is a byproduct of the previous analysis. Part (b) is a consequence of the fact that
	$\mathscr{C}^+_{0.5}$ bifurcates from $u=0$ at $\l=\pi^2$. Thus, by construction,
	so does it the component $\mathscr{C}_{h,s}^+$. Moreover, it is well known that $\l=\pi^2$ is the unique bifurcation value to positive solutions from $u=0$. Therefore, \eqref{ii.27} holds.
	\par
	The fact that, for every $\l_0 <\pi^2$, the set $\mathscr{D}_0$ is bounded in $\mc{C}^1([0,1];\R)$
	is a direct consequence, e.g.,  of the a priori bounds of \cite{ALG} and \cite{LGMHZ-2023}. To show \eqref{ii.28}, suppose that $\l<0$ and let $x_m=x_m(\l)\in (0,1)$ be such that
	$$
	u_\l(x_m)=\|u_\l\|_\infty.
	$$
	Then, $u'_\l(x_m)=0$ and $u''_\l(x_m)\leq 0$. Thus, it follows from \eqref{1.1} that
	$$
	\l + a(x_m) u_\l^{p-1}(x_m) \geq 0
	$$
	and hence, $a(x_m)=1$. Therefore,
	$$
	u_\l(x_m)=\|u_\l\|_\infty \geq (-\l)^\frac{1}{p-1}
	$$
	and, letting $\l\downarrow -\infty$ in this estimate, \eqref{ii.28} holds. This shows Part (c).
	\par
	The continuity of the map $\l\mapsto u_\l$ follows, through a rather standard compactness argument,
	from the uniqueness of the symmetric positive solution. Indeed, necessarily,
	$$
	\lim_{\l\to \mu}||u_\l-u_\mu\|_\infty =0
	$$
	for all $\mu \in (-\infty,\pi^2]$. Note that Part (c) guarantees the existence of a priori bounds
	in $[\mu-1,\pi^2)$. This ends the proof.
\end{proof}

Figure \ref{Fig4} shows two admissible global bifurcation diagrams of positive solutions of
\eqref{1.1}. In the first bifurcation diagram we are assuming that $\mathscr{C}^+_{h}$ consists of
radially symmetric positive solutions and, hence, $\mathscr{C}^+_{h} = \mathscr{C}^+_{h,s}$. In the second one
we are representing a case when $\mathscr{C}^+_{h,s}$ is a proper subset of $\mathscr{C}^+_{h}$.
One of the main goals of this paper is ascertaining whether, or not, each of these situations can occur.

\begin{figure}[h!]
	\centering
	\includegraphics[scale=0.51]{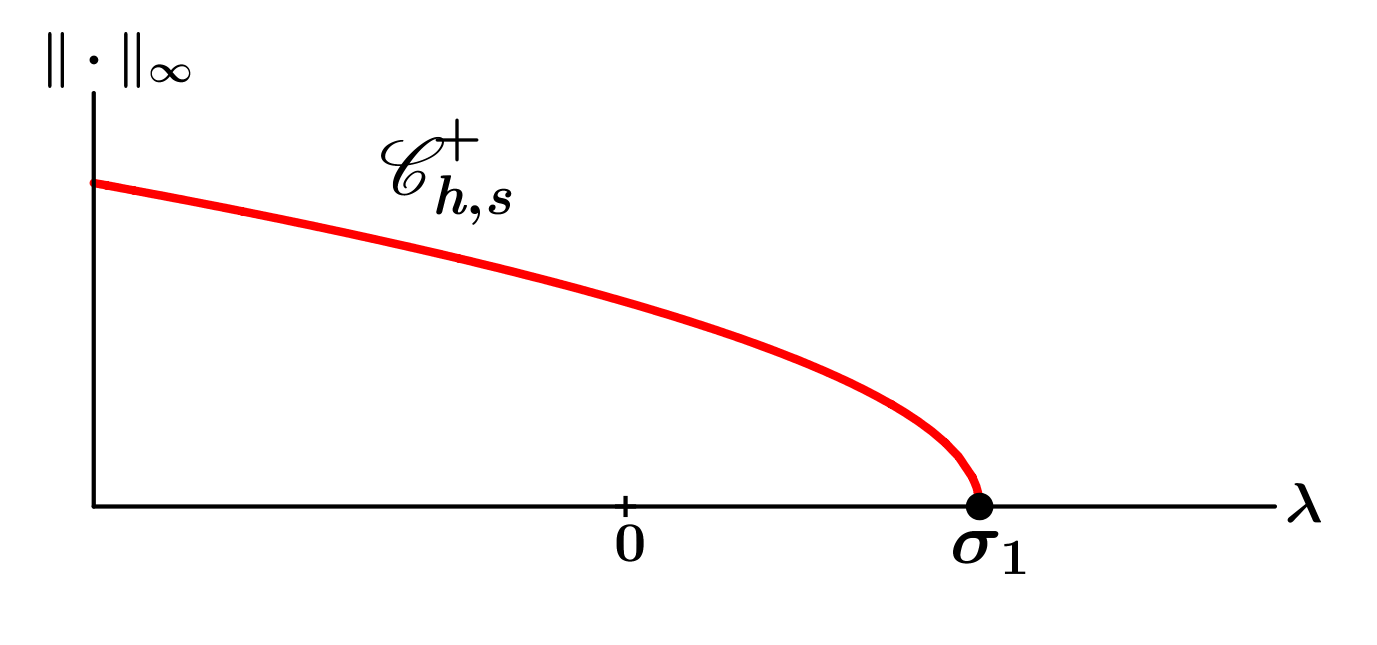}
	\includegraphics[scale=0.51]{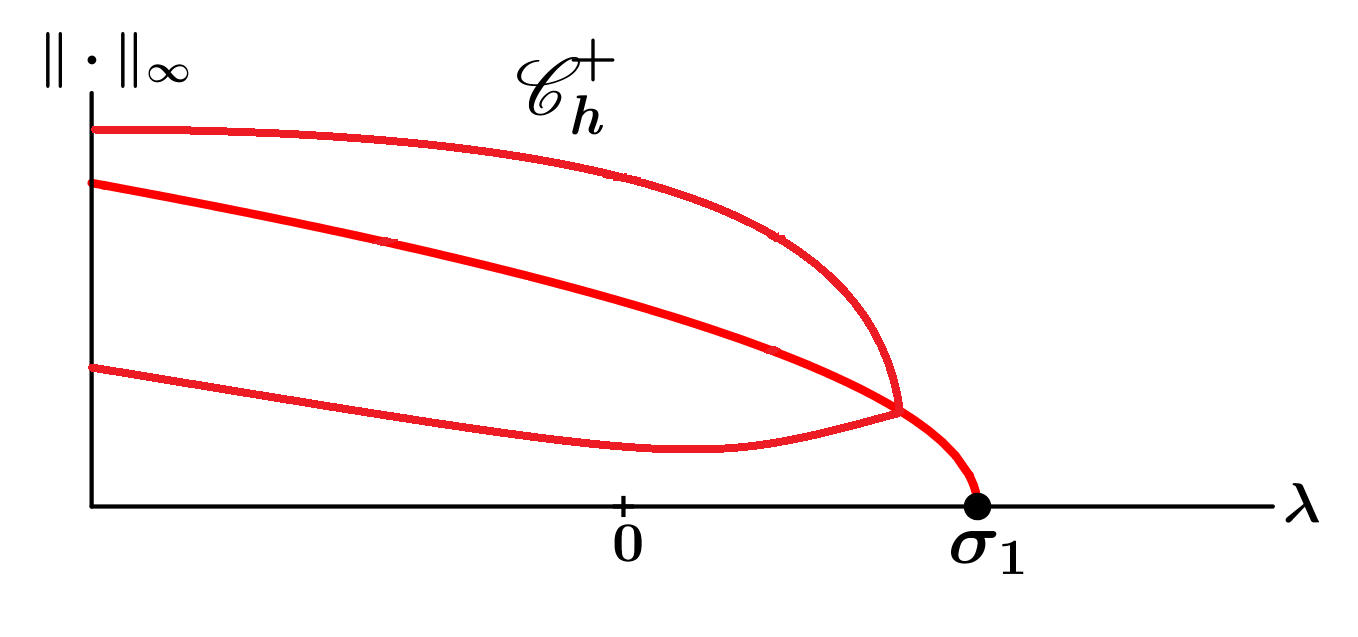}
	\caption{The component $\mathscr{C}_{h,s}^+$ of \eqref{1.1} positive symmetric solutions (left) and a possible component $\mathscr{C}_h^+$ of \eqref{1.1} positive solutions (right).}
	\label{Fig4}
\end{figure}

\setcounter{equation}{0}

\section{Limiting behavior of $\mathscr{C}^+_{h}$ as $h\uparrow1$}

\noindent The main goal of this section is analyzing the behavior of the component of positive solutions of \eqref{1.1}, assuming \eqref{1.3}, bifurcating from $u=0$ at $\l=\pi^2$, $\mathscr{C}^+_{h}$, as $h\uparrow1$, i.e. when the nonlinear system ($\mathscr{N}$) tends towards the linear system ($\mathscr{L}$). In the limiting situation when $h=1$, \eqref{1.1} reduces to ($\mathscr{L}$), which admits some positive solution if, and only if, $\l=\pi^2$. Actually, the component of positive solutions bifurcating from $u=0$ at $\l=\pi^2$ for this limiting problem, denoted by $\mathscr{C}_1^+$, is given by
$$
   \mathscr{C}_1^+ =\{(\pi^2,\a \sin(\pi x))\;:\; \a\in \R_+:=[0,+\infty)\}.
$$
Thus, $\l=\pi^2$ is a bifurcation value to a vertical bifurcation from $u=0$. In particular,
$\mc{P}_\l(\mathscr{C}_1^+)=\{\pi^2\}$. Consequently, the remaining positive solutions of the component
$\mathscr{C}_{h}^+$ for $\l<\pi^2$ should  be unbounded as $h\uparrow 1$. The next result provides us with the point-wise behavior of the the positive solutions of \eqref{1.1} as $h \uparrow 1$.
\par

\begin{theorem}
\label{th3.1}
For any given $\l<\pi^2$ and, for  every $h\in [0,1)$, let $u_h$ be a positive solution of
\eqref{1.1} for the choice \eqref{1.3}. Then,
\begin{equation}
\label{3.1}
	\lim_{h\uparrow 1}\| u_h\|_{\infty}=+\infty.
\end{equation}
Moreover,
\begin{equation}
\label{3.2}
	\lim_{h\uparrow 1}u_h(x)=+\infty \;\; \hbox{for all}\;\; x\in (0,1).
\end{equation}
In particular, the components $\mathscr{C}_h^+$ blow-up, point-wise in $(0,1)$, as $h\uparrow 1$.
\end{theorem}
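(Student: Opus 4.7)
The plan is to first establish \eqref{3.1} via a uniform positive lower bound on $\|u_h\|_\infty$ combined with a compactness argument, and then to upgrade this to the pointwise statement \eqref{3.2} by exploiting the explicit Dirichlet representation of $u_h$ on the linear middle interval to reverse-engineer a blow-up of the boundary values $\alpha_h:=u_h(\tfrac{1-h}{2})$ and $\beta_h:=u_h(\tfrac{1+h}{2})$ out of any supposed boundedness of $u_h(x_0)$.

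For \eqref{3.1}, I would first test \eqref{1.1} against $\sin(\pi x)$ and integrate by parts twice, using $a_h\le 1$ and $u_h^p\le\|u_h\|_\infty^{p-1}u_h$, to obtain
\[
(\pi^2-\lambda)\int_0^1 u_h\sin(\pi x)\,dx\le\|u_h\|_\infty^{p-1}\int_0^1 u_h\sin(\pi x)\,dx;
\]
since $\int_0^1 u_h\sin(\pi x)\,dx>0$ by strong positivity, this yields the \emph{uniform} lower bound $\|u_h\|_\infty\ge(\pi^2-\lambda)^{1/(p-1)}>0$, valid for every $h\in(0,1)$. Suppose now, for a contradiction, that $\|u_{h_n}\|_\infty\le M$ along some $h_n\uparrow 1$. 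The equation bounds $u_{h_n}''$ in $L^\infty$, and together with the Dirichlet data this gives uniform $\mc C^1$-bounds; Arzel\`a--Ascoli then produces, along a subsequence, $u_{h_n}\to\bar u$ in $\mc C^1([0,1])$. On any compact $[\delta,1-\delta]\subset(0,1)$ the weight $a_{h_n}$ eventually vanishes, so the limit satisfies $-\bar u''=\lambda\bar u$ on $(0,1)$ with zero Dirichlet data, hence $\bar u\equiv 0$ since $\lambda<\pi^2$. This contradicts the uniform lower bound and establishes \eqref{3.1}.

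For \eqref{3.2}, fix $x_0\in(0,1)$. Because $u_h$ solves $-u''=\lambda u$ on $(\tfrac{1-h}{2},\tfrac{1+h}{2})$ with boundary data $\alpha_h,\beta_h$, it admits the harmonic-extension representation
\[
u_h(x)=\alpha_h\,K^-_\lambda(x,h)+\beta_h\,K^+_\lambda(x,h),
\]
with $K^\pm_\lambda>0$ built from $\sinh$ if $\lambda<0$, from affine functions if $\lambda=0$, and from $\sin$ if $0<\lambda<\pi^2$ (the oscillatory case being non-degenerate since $\sqrt\lambda\,h<\pi$). For each fixed $x_0\in(0,1)$ the values $K^\pm_\lambda(x_0,h)$ remain bounded and bounded away from zero as $h\uparrow 1$, and differentiating the representation at the transition points expresses $v_1:=u_h'(\tfrac{1-h}{2})$ and $v_2:=u_h'(\tfrac{1+h}{2})$ as linear combinations of $\alpha_h,\beta_h$ whose coefficients stay bounded in the same limit. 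Assume, toward a contradiction, that $u_{h_n}(x_0)$ is bounded along some $h_n\uparrow 1$. The representation forces $\alpha_{h_n},\beta_{h_n}$, and hence $v_{1,n},v_{2,n}$, to be bounded. Conservation of the nonlinear energy on the outer intervals, $\tfrac12(u_h'(0))^2=\tfrac12 v_1^2+\tfrac{\lambda}{2}\alpha_h^2+\tfrac{1}{p+1}\alpha_h^{p+1}$, then yields bounded shooting energies; the coercivity of $V_\lambda(s):=\tfrac{\lambda}{2}s^2+\tfrac{1}{p+1}s^{p+1}$ together with the phase portraits of Section~\ref{sec2} caps the turning amplitude of the outer orbit, so $u_{h_n}$ is bounded on $[0,(1-h_n)/2]$ and, symmetrically, on $[(1+h_n)/2,1]$, while the representation controls it on the middle. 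Thus $\|u_{h_n}\|_\infty$ would be bounded overall, contradicting \eqref{3.1}, and therefore $u_h(x_0)\to+\infty$.

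The main technical obstacle is to present the middle-interval representation and the non-degeneracy of its $h\uparrow 1$ limits uniformly across the three regimes $\lambda<0$, $\lambda=0$, and $0<\lambda<\pi^2$, since the kernels change character at each threshold (and one must verify in the oscillatory regime that $\sqrt\lambda\,h$ stays safely below $\pi$); once the positivity of the limit kernels $K^\pm_\lambda(x_0,1)$ is secured, the remainder of \eqref{3.2} collapses to the phase-plane bookkeeping already developed in Section~\ref{sec2}.
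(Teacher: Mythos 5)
Your argument is correct, and for the pointwise statement it follows a genuinely different route from the paper's. For \eqref{3.1} the two proofs are close in spirit: both derive a contradiction by extracting, from a supposedly bounded sequence $u_{h_n}$, a limit that would be a nontrivial nonnegative solution of $-\psi''=\lambda\psi$ with Dirichlet data and $\lambda<\pi^2$. The paper normalizes $u_{h_n}/\|u_{h_n}\|_\infty$ and uses the compact resolvent $\mathcal{K}$, so it never needs a lower bound on $\|u_h\|_\infty$; your Rayleigh-type estimate $\|u_h\|_\infty\ge(\pi^2-\lambda)^{1/(p-1)}$, obtained by testing against $\sin(\pi x)$, is a clean substitute that lets you work with $u_{h_n}$ directly. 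For \eqref{3.2} the paper splits into two cases: for $\lambda\ge 0$ concavity gives $u_h(x)\ge\min(x,1-x)\,\|u_h\|_\infty$ at once, while for $\lambda<0$ it argues \emph{forward} — a phase-plane constraint (the orbit cannot enter the set $\mathscr{A}_h$ of points reaching the $v$-axis under the linear flow in time $h$) bounds the slope $v/u$ at the transition point, the energy identity then forces $u_{h_n}(\tfrac{1-h_n}{2})\to+\infty$, and a rather delicate sign analysis of the coefficients $A_{h_n},B_{h_n}$ in $A_{h_n}e^{\sqrt{|\lambda|}x}+B_{h_n}e^{-\sqrt{|\lambda|}x}$ yields the pointwise blow-up. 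You instead argue backward from a single interior point: boundedness of $u_{h_n}(x_0)$, together with the positivity and non-degeneracy of the two-point kernels $K^{\pm}_\lambda(x_0,h)$ as $h\uparrow 1$ (this is exactly where $\lambda<\pi^2$ enters in the oscillatory regime, since $\sin(\sqrt{\lambda}h)\to\sin\sqrt{\lambda}>0$), forces the transition values $\alpha_{h_n},\beta_{h_n}$ and slopes $v_{1,n},v_{2,n}$ to stay bounded; conservation of $\mathscr{E}_{\mathscr{N}}$ on the outer intervals and the coercivity of $V_\lambda$ then cap any interior turning amplitude, and the middle interval is controlled by the same representation, so $\|u_{h_n}\|_\infty$ would be bounded, contradicting \eqref{3.1}. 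Your scheme treats the three regimes of $\lambda$ uniformly and avoids the paper's case analysis of $A_{h_n},B_{h_n}$; the price is the (elementary) verification of the kernel bounds in each regime, which you correctly flag as the only technical point. Both proofs are sound.
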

\begin{proof}
As in \cite[Lemma 2.1]{FLG-2022} or \cite[Lemma 2.2]{LGMHZ-2023}, we denote by $\mc{K}:\mc{C}([0,1];\R)\to \mc{C}^1([0,1];\R)$ the linear, continuous and compact operator defined, for every $f\in \mc{C}([0,1];\R)$, by
$$
  \mc{K}f(x):=\int_0^x (y-x)f(y)\,dy-x\int_0^1 (y-1)f(y)\,dy,
$$
which provides us with the unique solution of the linear problem
\begin{equation*}
\left\{
\begin{array}{l}
-u'' = f \quad \hbox{in}\;\; [0,1],\\[1pt]
u(0)=u(1)=0.
\end{array}
\right.
\end{equation*}
Thus,
\begin{equation}
\label{3.3}
	u_h=\mc{K}(\l u_h+a_hu_h^p)\;\;\hbox{for all}\;\; h\in [0,1).
\end{equation}
Arguing by contradiction, suppose that there is  a sequence, $\{h_n\}_{n\geq 1}$ in $(0,1)$ converging to 1, such that, for some constant $C>0$,
\begin{equation}
\label{3.4}
  \|u_{h_n}\|_\infty \leq C \;\; \hbox{for all}\;\; n \geq 1.
\end{equation}
Then, \eqref{3.3} implies that, for every $n\geq 1$,
\begin{equation}
\label{3.5}
 \frac{u_{h_n}}{\|u_{h_n}\|_{\infty}}=\mc{K}\left(\l\frac{u_{h_n}}{\|u_{h_n}\|_{\infty}}+
 a_{h_n}u_{h_n}^{p-1}\frac{u_{h_n}}{\|u_{h_n}\|_{\infty}}\right).
\end{equation}
Moreover, thanks to \eqref{3.4}, the sequence
$$
 \l\frac{u_{h_n}}{\|u_{h_n}\|_{\infty}}+a_{h_n}(x)u_{h_n}^{p-1}\frac{u_{h_n}}{\|u_{h_n}\|_{\infty}},
 \quad n\geq 1,
$$
is bounded in $\mc{C}([0,1];\R)$. Thus, by the compactness of $\mc{K}$, there exists a subsequence of
$\{u_{h_n}\}_{n\geq 1}$, labeled by $h_{n_m}$, $m\geq 1$, such that
$$
   \lim_{m\to+\infty}\frac{u_{h_{n_m}}}{\|u_{h_{n_m}}\|_\infty}= \psi
$$
in $\mc{C}^1([0,1];\R)$, for some   $\psi \in \mc{C}^1([0,1];\R)$ such that
$\|\psi\|_{\infty}=1$ and $\psi\gneq 0$. Consequently, particularizing \eqref{3.5} at
$h_{n_m}$ and letting $m\to \infty$, shows that
$$
\left\{
	\begin{array}{l}
		-\psi''=\l\psi\quad\hbox{in}\;(0,1),\\[1pt]
		\psi(0)=\psi(1)=0,
	\end{array}
	\right.
$$
which is impossible because $\psi\gneq 0$ and $\l<\pi^2$. Therefore, \eqref{3.1} holds.
\par
To prove \eqref{3.2} we will use a concavity argument if $\l\geq0$ and a phase plane argument together with the analysis of the linear problem if $\l<0$.
\par
\vspace{0.2cm}
\noindent \emph{Case 1:} Assume $\l\geq0$. Then, for every $x\in[0,1]$,
$$
-u_{h}''(x)=\l u_{h}(x)+a_{h}(x)u_{h}^{p}(x)\geq0
$$
and, hence, $u_{h}$ is concave in $[0,1]$ for all $h\in[0,1]$. Let  $x_{m_h}\in (0,1)$ be such that
$$
    u_{h}(x_{m_h})=\|u_{h}\|_{\infty}.
$$
Then, for every $x\in(0,1)$, we have that
\begin{equation*}
x=\left\{
\begin{array}{ll}
\left(1-\frac{x}{x_{m_h}}\right)0+\frac{x}{x_{m_h}}x_{m_h}&\quad\hbox{if}\;\; x\leq x_{m_h},\\[6pt]
\left(1-\frac{1-x}{1-x_{m_h}}\right)1+\frac{1-x}{1-x_{m_h}}x_{m_h}&\quad\hbox{if}\;\; x > x_{m_h},
\end{array}
\right.
\end{equation*}
with $\frac{x}{x_{m_h}}\in(x,1]$ if $x\leq x_{m_h}$, and $
\frac{1-x}{1-x_{m_h}}\in(1-x,1)$ if $x>x_{m_h}
$. Consequently, by the concavity of $u_{h}$,
\begin{equation*}
u_{h}(x)\geq
\left\{
\begin{array}{ll}
\left(1-\frac{x}{x_{m_h}}\right)u_{h}(0)+\frac{x}{x_{m_h}}\|u_{h}\|_{\infty}\geq x\|u_{h}\|_{\infty}&\hbox{if}\;\; x\leq x_{m_h},\\[6pt]
\left(1-\frac{1-x}{1-x_{m_h}}\right)u_{h}(1)+\frac{1-x}{1-x_{m_h}}
\|u_{h}\|_{\infty}>(1-x)\|u_{h}\|_{\infty}&\hbox{if}\;\; x> x_{m_h}.
\end{array}
\right.
\end{equation*}
Therefore, by \eqref{3.1}, we find that, for every $x\in(0,1)$,
\[
\lim_{h\uparrow 1}u_{h}(x)=+\infty.
\]
This ends the proof in this case.
\par
\vspace{0.2cm}
\noindent \emph{Case 2:} Assume $\l<0$. We begin by constructing the geometric set of points, $\mathscr{A}_h$,  lying in the fourth (resp. first) quadrant and taking time $h$ to reach the negative (resp. positive) $v$-axis trough the linear flow of $(\mathscr{L})$. Note that,  for every positive solution $u$ of \eqref{1.1}, we have that
\begin{equation}
\label{c.6}
  (u(t),v(t))= (u(t),u'(t))\notin \mathscr{A}_h\;\;\hbox{for all}\;\; t \in [0,\tfrac{1-h}{2}],
\end{equation}
because $u(t)>0$ for all $t\in (0,1)$. To construct $\mathscr{A}_h$ we adapt an argument of Section \ref{sec2.1}. For every $v_-<0$, the required time to reach $(0,v_-)$ from a generic point
$$
(u_r,v_r)\in\{(u,v)\,:\,\mathscr{E}_{\mathscr{L}}(u,v)=v_-^2\}
$$
is given by
\begin{equation}
\label{c.7}
\begin{split}
{T}_{\mathscr{L}}(\l,u_r,0) & :=-\int_{u_r}^{0}\frac{du}{\sqrt{v_-^2-\l u^2}} =- \frac{1}{\sqrt{|\l|}}\int_{\frac{\sqrt{|\l|}u_r}{v_-}}^{0} \frac{ds}{\sqrt{1+s^2}}
\\&=
\frac{1}{\sqrt{|\l|}}\ln\left(\tfrac{\sqrt{|\l|}u_r}{v_-}+
\sqrt{\Big(\tfrac{\sqrt{|\l|}u_r}{v_-}\Big)^2+1}\,\right),
\end{split}
\end{equation}
because $g(s):=\ln(s+\sqrt{s^2+1})$ satisfies $g'(s)=\tfrac{1}{\sqrt{s^2+1}}$ for all $s\in\R$. Thus, according to \eqref{c.7}, the set of points $(u_r,v_r)$ for which
$$
   {T}_{\mathscr{L}}(\l,u_r,0)=h
$$
can be described through $g(\tfrac{\sqrt{|\l|}u_r}{v_-})=\sqrt{|\l|}h$, or, equivalently,
$$
 \frac{\sqrt{|\l|}u_r}{v_-}=g^{-1}(\sqrt{|\l|}h),
$$
because $g'(s)>0$ for all $s\in\R$, and, hence, it is globally invertible. Consequently, since  $\mathscr{E}_{\mathscr{L}}(u_r,v_r)=v_-^2$, it becomes apparent that
$$
\frac{v_r^2}{|\l|u_r^2}-1=\frac{v_-^2}{|\l|u_r^2}=[g^{-1}(\sqrt{|\l|}h)]^{-2}\equiv D(h).
$$
Therefore,
$$
v_r=\pm\sqrt{|\l| (D(h)+1)}\, u_r.
$$
In other words, setting $m_h:=\sqrt{|\l| (D(h)+1)}$, we have that
$$
  \mathscr{A}_h:= \{(u_r,v_r)\;:\;v_r=\pm m_h u_r\}
$$
In particular, the set of points $(u_r,v_r)$, with $u_r<0$, reaching the negative $v$-axis at a time $h$ are those on the straight line
\begin{equation}
\label{c.8}
v=-m_h u.
\end{equation}
An important feature  is that $m_h$ is decreasing with respect to $h$.
\par
On the other hand, by the proof of Theorem \ref{th2.2}(c), we already know that, for every $h\in (0,1)$,
$$
   x_{m_h} \in (0,\tfrac{1-h}{2}]\cup [\tfrac{1+h}{2},1)\;\; \hbox{if}\;\; u(x_{m_h})=\|u_h\|_\infty.
$$
Thus, there exists an increasing  subsequence, $\{h_n\}_{n\geq 1}$, in $(0,1)$, such that either
\begin{equation}
\label{c.9}
   \lim_{n\to \infty}h_n =1\;\; \hbox{and}\;\; x_n\equiv x_{m_{h_n}}\in  (0,\tfrac{1-h_n}{2}]\;\;
   \hbox{for all}\;\; n\geq 1,
\end{equation}
or
\begin{equation}
\label{c.10}
  \lim_{n\to \infty}h_n =1\;\; \hbox{and}\;\; x_n\equiv x_{m_{h_n}}\in  [\tfrac{1+h_n}{2},1)\;\;
   \hbox{for all}\;\; n\geq 1,
\end{equation}
though these alternatives are far from excluding, because the symmetric solutions satisfy both.
Assume \eqref{c.9} holds.  Then, owing to \eqref{c.6}, we find that
\begin{equation*}
\frac{v_{h_n}\left(\tfrac{1-h_n}{2}\right)}{u_{h_n}\left(\tfrac{1-h_n}{2}\right)}\geq -m_{h_1}.
\end{equation*}
Note that $v_{h_n}(\tfrac{1-h_n}{2})\leq0$. Indeed, by \eqref{c.9}, $v(x_n)=0$, and, hence, by the clockwise rotation sense in the phase plane, $v(x)\leq0$ for all $x\in[x_n,\tfrac{1-h_n}{2}]$. As $m_{h_1}>0$, the above inequality implies
\begin{equation}
\label{c.11}
v_{h_n}^2\left(\tfrac{1-h_n}{2}\right)\leq m_{h_1}^2u_{h_n}^2\left(\tfrac{1-h_n}{2}\right).
\end{equation}
Thus, as the associated energy to the system ($\mathscr{N}$) is
$$
\mathscr{E}_{\mathscr{N}}(u,v)=\frac{1}{2}v^2+\frac{\l}{2}u^2+\frac{1}{p+1}u^{p+1},
$$
it follows from \eqref{c.11} that
\begin{equation*}
\begin{split}
\mathscr{E}_{\mathscr{N}}(u_{h_n}\left(\tfrac{1-h_n}{2}\right),v_{h_n}& \left(\tfrac{1-h_n}{2}\right))=
\frac{\l}{2}\|u_{h_n}\|_{\infty}^2+\frac{1}{p+1}\|u_{h_n}\|_{\infty}^{p+1}\\&
=\frac{1}{2}v_{h_n}^2\left(\tfrac{1-h_n}{2}\right)+\frac{\l}{2}u_{h_n}^2
\left(\tfrac{1-h_n}{2}\right)+\frac{1}{p+1}u_{h_n}^{p+1}\left(\tfrac{1-h_n}{2}\right)
\\&\leq\frac{m_{h_1}^2+\l}{2}u_{h_n}^2\left(\tfrac{1-h_n}{2}\right)+\frac{1}{p+1}u_{h_n}^{p+1}
\left(\tfrac{1-h_n}{2}\right)\\&\leq \frac{m_{h_1}^2}{2}u_{h_n}^2\left(\tfrac{1-h_n}{2}\right)+\frac{1}{p+1}u_{h_n}^{p+1}
\left(\tfrac{1-h_n}{2}\right).
\end{split}
\end{equation*}
Therefore, we can conclude from \eqref{3.1} that
\begin{equation}
\label{c.12}
\lim_{n\to+\infty}u_{h_n}\left(\tfrac{1-h_n}{2}\right)=+\infty.
\end{equation}
On the other hand, by construction, we have that, for every $n\geq 1$,
\begin{equation}
\label{c.13}
-u_{h_n}''=\l u_{h_n}\;\; \hbox{in}\;\;\left(\tfrac{1-h_n}{2},\tfrac{1+h_n}{2}\right).
\end{equation}
Since $\l<0$, for every $n\geq 1$, there exist two constants, $A_{h_n}, B_{h_n}\in\R$, such that, for every $x\in \left(\tfrac{1-h_n}{2},\tfrac{1+h_n}{2}\right)$,
\begin{equation*}
u_{h_n}(x)=A_{h_n}e^{\sqrt{|\l|}x}+B_{h_n}e^{-\sqrt{|\l|}x}=
e^{-\sqrt{|\l|}x}A_{h_n}\left(e^{2\sqrt{|\l|}x}+\tfrac{B_{h_n}}{A_{h_n}}\right).
\end{equation*}
Equivalently, for every $x\in (0,1)$ and sufficiently large $n\geq 1$,
\begin{equation}
\label{c.14}
e^{\sqrt{|\l|}x} u_{h_n}(x)=A_{h_n}\left(e^{2\sqrt{|\l|}x}+\tfrac{B_{h_n}}{A_{h_n}}\right).
\end{equation}
Note that, for every $x\in (0,1)$ and sufficiently large $n$, we have that $x\in\left(\tfrac{1-h_n}{2},\tfrac{1+h_n}{2}\right)$.
Thanks to \eqref{c.12}, we can infer from \eqref{c.14} that
\begin{equation}
\label{c.15}
\begin{split}
  +\infty & = \lim_{n\to+\infty} \left( e^{\sqrt{|\l|}\frac{1-h_n}{2}}u_{h_n}(\tfrac{1-h_n}{2})\right) \\
  &=\lim_{n\to+\infty}\left(A_{h_n}\left(e^{2\sqrt{|\l|}\frac{1-h_n}{2}}+\tfrac{B_{h_n}}{A_{h_n}}\right)\right).
  \end{split}
\end{equation}
Thus, along some subsequence, also labeled  by $n$, we find that, either
\begin{equation}
\label{c.16}
  \lim_{n\to+\infty}A_{h_n}=+\infty, \;\;\hbox{or}\;\;  \lim_{n\to+\infty}B_{h_n}=+\infty.
\end{equation}
Suppose that
$$
   \liminf_{n\to+\infty}A_{h_n}>-\infty.
$$
Then, by \eqref{c.14} and \eqref{c.15}, we find that, for every $x\in (0,1)$,
\begin{align*}
\liminf_{n\to+\infty}\left( e^{\sqrt{|\l|}x} u_{h_n}(x)\right) = & \liminf_{n\to+\infty}\left( e^{\sqrt{|\l|}\frac{1-h_n}{2}} u_{h_n}(\tfrac{1-h_n}{2})\right)\\ &
+\liminf_{n\to+\infty}\left(A_{h_n}\left(e^{2\sqrt{|\l|}x}-e^{2\sqrt{|\l|}\frac{1-h_n}{2}}\right)\right)=+\infty.
\end{align*}
Consequently,
$$
  \lim_{n\to+\infty}u_{h_n}(x)=+\infty.
$$
Now, suppose that $\liminf_{n\to+\infty}A_{h_n}=-\infty$. Then, along some subsequence, labeled again by $n$,
we can assume that
\begin{equation}
\label{c.17}
   \lim_{n\to+\infty}A_{h_n}=-\infty
\end{equation}
and, thanks to \eqref{c.16},
$$
   \lim_{n\to+\infty}B_{h_n}=+\infty.
$$
In this case, by \eqref{c.14} and the positivity of $u_{h_n}$, we find that, for every $x\in (0,1)$ and sufficiently large $n$,
\begin{equation*}
  e^{2\sqrt{|\l|}x}+\tfrac{B_{h_n}}{A_{h_n}}<0.
\end{equation*}
We claim that, for every $x\in (0,1)$,
\begin{equation}
\label{c.18}
  \limsup_{n\to +\infty} \left( e^{2\sqrt{|\l|}x}+\tfrac{B_{h_n}}{A_{h_n}}\right) <0.
\end{equation}
To show \eqref{c.18} we will argue by contradiction, assuming that there exist  $\tilde x\in (0,1)$ and a subsequence of $h_n$, relabeled by $n$, such that
$$
\lim_{n\to+\infty}\left(e^{2\sqrt{|\l|}\tilde x}+\tfrac{B_{h_n}}{A_{h_n}}\right)=0.
$$
Then, by \eqref{c.17}, for every $x>\tilde x$, we have that
$$
\lim_{n\to+\infty}\left( e^{\sqrt{|\l|} x}u_{h_n}(x)\right) =\lim_{n\to+\infty}\left( A_{h_n}\left(e^{2\sqrt{|\l|}x}+\tfrac{B_{h_n}}{A_{h_n}}\right)\right)=-\infty,
$$
contradicting the positivity of $u_{h_n}$. Therefore, again by \eqref{c.17}, it becomes apparent
from \eqref{c.18} that, for every $x\in (0,1)$,
$$
\lim_{n\to+\infty}\left( e^{\sqrt{|\l|} x} u_{h_n}(x)\right) =
\lim_{n\to+\infty}\left( A_{h_n}\left(e^{2\sqrt{|\l|}x}+\tfrac{B_{h_n}}{A_{h_n}}\right)\right)=+\infty.
$$
The case when \eqref{c.10} occurs can be proven easily by adapting
the proof of the case \eqref{c.9}. By repetitive, its technical details will be omitted here.
As these arguments can be repeated along any increasing sequence of $h_n$'s converging to $1$ as $n\to +\infty$, the proof is complete.
\end{proof}

The next result establishes that the solutions of the component $\mathscr{C}_{1}^+$ perturb into solutions of  the components $\mathscr{C}_{h,s}^+$ as $h<1$ perturbs from $1$.

\begin{theorem}
\label{th3.2}
For every $\a>0$, there are two increasing sequences, $\{h_n\}_{n\geq1}$ in $(0,1)$ and $\{\l_n\}_{n\geq1}$ in $(0,\pi^2)$, such that
\begin{equation}
\label{c.19}
\lim_{n\to+\infty}h_n=1, \quad \lim_{n\to+\infty}\l_n=\pi^2,\quad \hbox{and}\;\; \|u_{\l_n}\|_\infty =\a \;\;
\hbox{for all}\;\; n\geq 1,
\end{equation}
where $u_{\l_n}$ is the unique symmetric positive solution of \eqref{1.1} for
$(\l,h)=(\l_n,h_n)$, $n\geq 1$. Moreover,
\begin{equation*}
\lim_{n\to+\infty} u_{\l_n}=\alpha\sin(\pi \cdot)\quad  \hbox{in}\;\; \mc{C}^1([0,1];\R).
\end{equation*}
In other words,  $(\pi^2,\a \sin(\pi \cdot))\in \mathscr{C}_1^+$ perturbs into the sequence
$$
  (\l_n,u_{\l_n})\in \mathscr{C}_{h_n,s}^+,\quad n\geq 1.
$$
\end{theorem}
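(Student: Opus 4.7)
The plan is to fix $\alpha>0$ and construct the sequences by combining the intermediate value theorem, Theorem~\ref{th3.1}, and a compactness argument based on the integral formulation of~\eqref{1.1}. For every $h\in(0,1)$ and $\lambda<\pi^2$, let me write $u_\lambda^h$ for the unique symmetric positive solution of~\eqref{1.1}. By Theorem~\ref{th2.2}, the map $\lambda\mapsto\|u_\lambda^h\|_\infty$ is continuous on $(-\infty,\pi^2)$, tends to $0$ as $\lambda\uparrow\pi^2$, and tends to $+\infty$ as $\lambda\downarrow -\infty$. On the other hand, Theorem~\ref{th3.1} (applied, e.g., with $\lambda=0$) provides some $h^*\in(0,1)$ such that $\|u_0^h\|_\infty>\alpha$ for every $h\in[h^*,1)$. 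Hence, the intermediate value theorem yields a value $\lambda(h)\in(0,\pi^2)$ with $\|u_{\lambda(h)}^h\|_\infty=\alpha$ for every such $h$.

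The core of the argument is to show that $\lambda(h)\uparrow\pi^2$ as $h\uparrow 1$. Arguing by contradiction, suppose there exists an increasing sequence $h_n\uparrow 1$ with $\lambda_n:=\lambda(h_n)\to\lambda^*$ for some $\lambda^*\in[0,\pi^2)$. Setting $u_n:=u_{\lambda_n}^{h_n}$, the integral equation reads
\[
u_n=\mathcal{K}\bigl(\lambda_n u_n+a_{h_n}u_n^p\bigr).
\]
The key observation is that the support of $a_{h_n}$ has Lebesgue measure $1-h_n\to 0$, while $\|u_n\|_\infty=\alpha$ remains bounded; consequently, the boundedness of the kernel of $\mathcal{K}$ and of its derivative ensure that $\mathcal{K}(a_{h_n}u_n^p)\to 0$ in $\mathcal{C}^1([0,1];\mathbb{R})$. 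The compactness of $\mathcal{K}$ then extracts a subsequence along which $u_n\to u^*$ in $\mathcal{C}^1$. Passing to the limit in the integral equation yields
\[
-u^{*\prime\prime}=\lambda^* u^*,\quad u^*(0)=u^*(1)=0,\quad u^*\geq 0,\quad \|u^*\|_\infty=\alpha>0,
\]
which forces $\lambda^*=\pi^2$, a contradiction.

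Once this is established, I would extract simultaneously increasing sequences $\{h_n\}$ and $\{\lambda_n\}$: starting from any sequence $\tilde h_k\uparrow 1$ and the corresponding $\tilde\lambda_k=\lambda(\tilde h_k)\to\pi^2$, a standard greedy extraction produces a subsequence along which $\tilde\lambda$ is strictly increasing, and both $h_n,\lambda_n$ then inherit monotonicity while $\lambda_n\in(0,\pi^2)$. Applying once more the compactness argument of the previous paragraph, but now with limit $\lambda^*=\pi^2$, any $\mathcal{C}^1$-subsequential limit of $u_{\lambda_n}^{h_n}$ is a nonnegative symmetric function $u^*$ satisfying $-u^{*\prime\prime}=\pi^2 u^*$ on $(0,1)$ with Dirichlet boundary conditions and $\|u^*\|_\infty=\alpha$; the only such function is $\alpha\sin(\pi\cdot)$, so by uniqueness of the subsequential limit the whole sequence converges to it in $\mathcal{C}^1([0,1];\mathbb{R})$.

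The main obstacle is the second step, where one must reconcile the blow-up phenomenon of Theorem~\ref{th3.1} (which pushes $\|u_\lambda^h\|_\infty$ to infinity as $h\uparrow 1$ for fixed $\lambda<\pi^2$) with the vanishing of $\|u_\lambda^h\|_\infty$ as $\lambda\uparrow\pi^2$ (Theorem~\ref{th2.2}(b)). The compensation is captured quantitatively by the integral equation together with the fact that the nonlinear contribution $a_{h_n}u_n^p$ vanishes in the limit because $a_{h_n}$ concentrates on a null set, so the bounded amplitude solutions effectively solve the linear shadow problem~\eqref{1.6} in the limit and therefore must lie on $\mathscr{C}^+_1$.
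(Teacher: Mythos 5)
Your proposal is correct and follows essentially the same route as the paper: Theorem~\ref{th3.1} plus the intermediate value theorem along the continuous curve $\mathscr{C}^+_{h,s}$ to produce the sequences, and then the compactness of $\mathcal{K}$ (together with the fact that the support of $a_{h_n}$ shrinks to a null set while $\|u_n\|_\infty=\a$ stays bounded) to identify the limit $\a\sin(\pi\cdot)$. The only difference is bookkeeping: the paper applies Theorem~\ref{th3.1} at $\l=\pi^2-\frac{1}{n}$ so that $\l_n\in(\pi^2-\frac{1}{n},\pi^2)$ converges to $\pi^2$ by construction, whereas you apply it only at $\l=0$ and then need an extra run of the compactness argument to force $\l(h)\to\pi^2$; both are valid, and you are in fact more explicit than the paper about why the nonlinear term drops out in the limit.
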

\begin{proof}
According to Theorem \ref{th2.2}, for every $h\in (0,1)$, $(\l,u_\l)$ bifurcates from $u=0$ at $\l=\pi^2$,
$\mc{P}_\l(\mathscr{C}_{h,s}^+)=(-\infty,\pi^2]$ and \eqref{ii.28} is satisfied.
\par
Thanks to Theorem \ref{th3.1}, for every integer $n\geq 1$, we have that
$$
  \lim_{h\uparrow 1}\|u_{\pi^2-\frac{1}{n}}\|_\infty =\infty.
$$
Thus, there exists $h_n \in (1-\frac{1}{n},1)$ such that
$$
   \|u_{\pi^2-\frac{1}{n}}\|_\infty>\a \quad \hbox{for}\;\; h=h_n.
$$
Hence, thanks to Theorem \ref{th2.2}, by the continuity of $\mathscr{C}_{h_n,s}^+$, there exists
$\l_n\in (\pi^2-\frac{1}{n},\pi^2)$ such that
$$
    \|u_{\l_n}\|_\infty=\a\quad \hbox{for}\;\; h=h_n.
$$
By construction, the sequence $(h_n,\l_n,u_{\l_n})$, $n\geq 1$, satisfies \eqref{c.19}. Moreover,
for every $n\geq 1$,
\begin{equation}
\label{c.20}
\frac{u_{n}}{\|u_{n}\|_{\infty}}
=\mc{K}\left(\l_n\frac{u_{n}}{\|u_{n}\|_{\infty}}+a_{h_n}u_{n}^{p-1}\frac{u_{n}}{\|u_{n}\|_{\infty}}\right),
\end{equation}
where $\mc{K}$ is the resolvent operator introduced in the proof of Theorem \ref{th3.1}. Since $\|u_{n}\|_{\infty}=\a$ for all $n\geq 1$, and $\lim_{n\to\infty}\l_n=\pi^2$, the sequence
\[
\l_n\frac{u_{n}}{\|u_{n}\|_{\infty}}+a_{h_n}u_{n}^{p-1}\frac{u_{n}}{\|u_{n}\|_{\infty}},\quad n\geq 1,
\]
is bounded in $[0,1]$. Thus, by the compactness of $\mc{K}$, it follows from \eqref{c.20}
that, along some subsequence, relabeled by $n_m$, $m\geq 1$,
$$
    \lim_{m\to+\infty}\frac{u_{{n_m}}}{\|u_{{n_m}}\|_\infty}= \psi\quad \hbox{in}\;\; \mc{C}_0^1([0,1];\R)
$$
for some $\psi\in \mc{C}_0^1([0,1];\R)$ such that $\|\psi\|_{\infty}=1$ and $\psi\gneq 0$. Finally, letting
$m\to \infty$ in \eqref{c.20} with $n=n_m$, it becomes apparent that
\[
\left\{ \begin{array}{l}
	-\psi''=\pi^2\psi\quad\hbox{in}\;(0,1),\\[1pt]
	\psi(0)=\psi(1)=0, \end{array} \right.
\]
which implies $\psi(x)=\sin(\pi x)$ for all $x\in[0,1]$. Therefore,
$$
    \lim_{m\to+\infty}\frac{u_{{n_m}}}{\a}= \sin(\pi\cdot)\quad \hbox{in}\;\; \mc{C}_0^1([0,1];\R).
$$
As the same argument can be repeated along any subsequence, we find that
\[
\lim_{n\to+\infty}\frac{u_n}{\a}= \sin(\pi\cdot)\quad \hbox{in}\;\; \mc{C}_0^1([0,1];\R).
\]
This ends the proof.
\end{proof}

\setcounter{equation}{0}
\section{A multiplicity result for $\l<0$}
\label{sec4}

\noindent As already discussed in Section \ref{sec2}, when $\l<0$ in \eqref{2.1}, the origin of ($\mathscr{N}$) is a local saddle point whose local unstable and stable manifolds conform the homoclinic connection of the level set $\mathscr{E}_{\mathscr N}(u,v)=0$, as illustrated in the left plot of Figure \ref{Fig1}. Moreover, the origin of $(\mathscr L)$ in \eqref{2.1} is a global saddle point whose unstable and stable
manifolds are the straight lines
\begin{equation}
\label{d.1}
    W^u:\; v=\sqrt{-\l}\,u\quad\hbox{and}\quad W^s:\; v=-\sqrt{-\l}\,u,
\end{equation}
respectively. Recalling that
$$
\mathscr{E}_{\mathscr{N}}(u,v):=\frac{1}{2}\left(v^2+\l u^2\right)+\frac{1}{p+1}u^{p+1},
$$
the tangent cone of $\mathscr{E}_{\mathscr{N}}(u,v)$ at the origin is given by the zeros of the homogeneous part $v^2+\l u^2$. Denoting it by $V(v^2+\l u^2)$, it is apparent that
\begin{equation}
	\label{d.2}
	V(v^2+\l u^2)=\{(u,v)\,:\,\mathscr{E}_{\mathscr{L}}(u,v)=0\}=W^u\cup W^s\cup\{(0,0)\}.
\end{equation}
Subsequently, we consider the time map  $T_{\mathscr N}(\l,u_0)$ introduced in \eqref{2.4} and defined for all $u_0>u_{\rm ho}$. Since $T_{\mathscr N}(\l,u_0)$ is decreasing with respect to $u_0>u_{\rm ho}$, it follows from \eqref{2.5} that there exists $v_0>0$ such that
\begin{equation}
\label{d.3}
	T_{\mathscr N}(\l,u_0)=\frac{1-h}{4} \;\; \hbox{with}\;\; v_0^2=\l u_0^2+\frac{2}{p+1}u_0^{p+1},
\end{equation}
i.e. the orbit of ($\mathscr{N}$) through $(0,v_0)$ needs a time $2T_{\mathscr N}(\l,u_0)=\frac{1-h}{2}$ to reach $(0,-v_0)$. Actually, by the monotonicity of $T_{\mathscr N}(\l,u)$ with respect to $u$, for every
$v\in (0,v_0]$, the unique solution of ($\mathscr{N}$) through $(0,v)$  remains in either the first or the fourth quadrants in the time interval $[0,\frac{1-h}{2}]$. Subsequently, we denote by $\Phi_{\mathscr{N}}^+(x,\cdot)$ the flow map of ($\mathscr{N}$) for all $x>0$, and, setting $\Gamma_0:=\{0\}\times [0,v_0]$, we consider the transformed curve by the flow of the vertical segment $\G_0$ after time $\frac{1-h}{2}$, denoted by
\begin{equation*}
	\Gamma_{0,\frac{1-h}{2}}^+:=\Phi_{\mathscr{N}}^+\left(\tfrac{1-h}{2},\Gamma_0\right),
\end{equation*}
which has been plotted using blue light color in the right picture of Figure \ref{Fig5}.
By construction, the curve $\Gamma_{0,\frac{1-h}{2}}^+$ lies in the first and fourth quadrants, and
$$
   \Phi_{\mathscr{N}}^+\left(\tfrac{1-h}{2},(0,v_0)\right)=(0,-v_0),
$$
as illustrated in the right plot of Figure \ref{Fig5}. Thus,  $\Gamma_{0,\frac{1-h}{2}}^+$  links the points $(0,0)$ and $(0,-v_0)$. Moreover,  by the monotonicity of ${T}_{\mathscr N}$,  there exists a unique $v_1\in (0,v_0)$ such that
\begin{equation}
	\label{d.4}
	\Phi_{\mathscr N}^+\left(\tfrac{1-h}{2},(0,v_1)\right)=(u_1,0)\in\Gamma_{0,\frac{1-h}{2}}^+.
\end{equation}
According to \eqref{2.4}, \eqref{d.3} and \eqref{d.4}, $u_0$ and $u_1$ satisfy
\begin{equation}
	\label{d.5}
	\begin{split}
		 \frac{1-h}{2}& =2 T_{\mathscr{N}}(\l,u_0)=2\int_0^1\frac{ds}{\sqrt{\l(1-s^2)+\frac{2}{p+1}u_0^{p-1}(1-s^{p+1})}},\\
		 \frac{1-h}{2} & =T_{\mathscr{N}}(\l,u_1)=\int_0^1\frac{ds}{\sqrt{\l(1-s^2)+\frac{2}{p+1}u_1^{p-1}(1-s^{p+1})}}.
	\end{split}
\end{equation}
The time map  $T_{\mathscr N}$ can be further decomposed as follows. For any given  $(u_\o,v_\o)\in W^u$, with $u_\o>0$, by \eqref{d.1}, we have that  $v_\o=\sqrt{-\l}u_\o>0$. Next, we consider the orbit of ($\mathscr{N}$)
$$
   \mathscr{E}_{\mathscr{N}}(u,v)=\mathscr{E}_{\mathscr{N}}(u_\o,v_\o),
$$
departing from $\G_0$ at, say, $(0,v_+)$ and crossing the $u$-axis at, say,  $(u_+,0)$.  Then, since
$$
  v^2+\l u^2 +\frac{2}{p+1}u^{p+1}=v^2_\o+\l u^2_\o +\frac{2}{p+1}u^{p+1}_\o=\frac{2}{p+1}u^{p+1}_\o,
$$
the necessary time to connect $(0,v_+)$ with $(u_\o,v_\o)$ along that orbit, is given by
\begin{equation}
\label{d.6}
  T_{\mathscr{N}}(\l,0,u_\o)=\int_0^1\frac{ds}{\sqrt{-\l s^2+\frac{2}{p+1}u_\o^{p-1}(1-s^{p+1})}}.
\end{equation}
Similarly, since
\begin{equation}
\label{d.7}
  v^2+\l u^2 +\frac{2}{p+1}u^{p+1}=\frac{2}{p+1}u^{p+1}_\o=\l u_+^2+\frac{2}{p+1}u^{p+1}_+,
\end{equation}
the necessary time to connect $(u_\o,v_\o)$ with $(u_+,0)$ is
\begin{equation}
\label{d.8}
T_{\mathscr{N}}(\l,u_\o,u_+)=\int_{\frac{u_\o}{u_+}}^1\frac{ds}{\sqrt{\l(1-s^2)+\frac{2}{p+1}u_+^{p-1}(1-s^{p+1})}}.
\end{equation}
By construction,
\begin{equation}
\label{d.9}
   T_{\mathscr{N}}(\l,0,u_+)=T_{\mathscr{N}}(\l,0,u_\o)+T_{\mathscr{N}}(\l,u_\o,u_+).
\end{equation}
Since $T_{\mathscr{N}}(\l,0,u_\o)$ is decreasing with respect to $u_\o$, the intersection
$\Gamma_{0,\frac{1-h}{2}}^+\cap W^u$ consists of a single point. Moreover, thanks to \eqref{d.7}, we have that
\[
\frac{u_\o}{u_+}=\left((p+1)\tfrac{\l}{2}u_+^{1-p}+1\right)^{\frac{1}{p+1}}.
\]
Thus, since  $p>1$ and $\l<0$, it is apparent that $\frac{u_\o}{u_+}$ is increasing with respect to $u_+$. Consequently, $T_{\mathscr{N}}(\l,u_\o,u_+)$ is decreasing with respect to $u_+$. Therefore, by the symmetry of $(\mathscr{N})$ and $(\mathscr{L})$ with respect to the $u$-axis, the curve $\Gamma_{0,\frac{1-h}{2}}^+$ also  intersects  $W^s$ at a single point,  as illustrated by the right plot of Figure \ref{Fig5}.
\par
Similarly, setting $\Gamma_1:=\{0\}\times [-v_0,0]$, we can shot backwards in time by the reversed flow $\Phi_{\mathscr{N}}^-(x,\cdot)$ of $(\mathscr{N})$ moving from $x=1$ to $x=\frac{1+h}{2}$. This provides us with the curve
\begin{equation*}
	\Gamma_{1,\frac{1+h}{2}}^-:=\Phi_{\mathscr{N}}^-(\tfrac{1+h}{2},\Gamma_1),
\end{equation*}
which has been plotted using green color on the right picture of Figure \ref{Fig5}. By construction, the curve $\Gamma_{1,\frac{1+h}{2}}^-$ lies entirely in the first and fourth quadrants. Moreover, by the symmetries of $a(x)$ and the systems $(\mathscr{N})$ and $(\mathscr L)$, it is apparent that
$$
\Gamma_{1,\frac{1+h}{2}}^-=\left\{(u,v)\in\mathbb{R}^2\,:\,(-u,v)\in \Gamma_{0,\frac{1-h}{2}}^+\right\}.
$$

\begin{figure}[h!]
	\centering
	\includegraphics[scale=0.9]{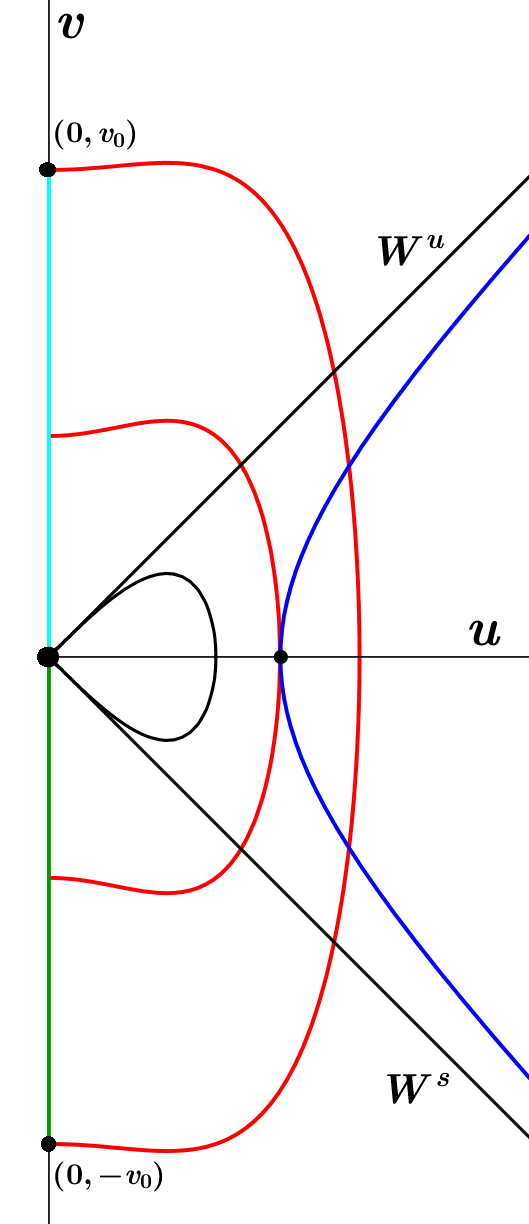}\qquad\qquad\qquad\includegraphics[scale=0.9]{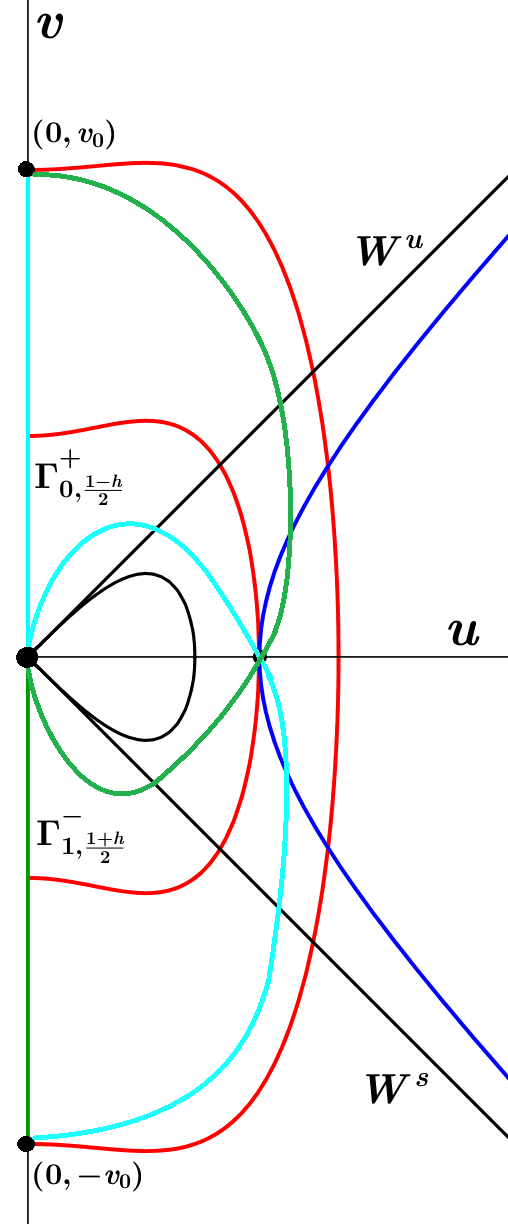}
	\caption{The sets $\Gamma_0$ in blue light and $\Gamma_1$ in green (left), as well as the sets $\Gamma_{0,\frac{1-h}{2}}^+$ in blue light and $\Gamma_{1,\frac{1+h}{2}}^-$ in green (right).}
	\label{Fig5}
\end{figure}

In particular, it crosses $W^{u}$ and $W^s$ at two single points, much like  $\Gamma_{0,\frac{1-h}{2}}^+$.
Figure \ref{Fig5} sketches the main properties of these curves. Note that
$$
  \Gamma_{1,\frac{1+h}{2}}^-\cap \Gamma_{0,\frac{1-h}{2}}^+=\{(u_1,0)\}
$$
Obviously, to construct positive asymmetric solutions of the problem \eqref{1.1}, the curves $\Gamma_{0,\frac{1-h}{2}}^+$ and $\Gamma_{1,\frac{1+h}{2}}^-$ should be connected in a time $h\in (0,1)$
through some orbit of the linear system $(\mathscr{L})$. The next result shows how to do it.

\begin{theorem}
\label{th4.1}
For every $h\in(0,1)$, there exists $\tilde \l(h)<0$ such that \eqref{1.1} has, at least,
two asymmetric positive solutions for each $\l\leq\tilde\l(h)$.
\end{theorem}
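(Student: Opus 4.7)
The plan is to exploit the exponential stretching of the linear flow $\Phi_{\mathscr{L}}(h,\cdot)$ along the unstable manifold $W^u$ as $\lambda\to-\infty$, which follows from the diagonalisation of the linear saddle. As already observed, a positive solution of \eqref{1.1} corresponds to a pair $(P,Q)$ with $P\in\Gamma_{0,\frac{1-h}{2}}^+$, $Q=\Phi_{\mathscr{L}}(h,P)\in\Gamma_{1,\frac{1+h}{2}}^-$, and the linear orbit joining $P$ to $Q$ remaining in $\{u>0\}$. The symmetric positive solution of Theorem \ref{th2.1} accounts for one such pair, with $P$ in the fourth-quadrant portion of $\Gamma_{0,\frac{1-h}{2}}^+$ (i.e., having $v_\ell<0$). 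The aim is to exhibit, for $-\lambda$ large enough, a further intersection lying in the first-quadrant portion of $\Gamma_{0,\frac{1-h}{2}}^+$; the reflection symmetry $x\mapsto 1-x$ of \eqref{1.1} will then produce the second asymmetric positive solution.

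I would parametrise $\Gamma_{0,\frac{1-h}{2}}^+$ by the initial velocity $v\in[0,v_0]$, so that $\gamma(v):=\Phi_{\mathscr{N}}^+(\tfrac{1-h}{2},(0,v))$ satisfies $\gamma(0)=(0,0)$, $\gamma(v_1)=(u_1,0)$, and $\gamma(v_0)=(0,-v_0)$; the first-quadrant portion corresponds to $v\in(0,v_1)$. Consider the image arc $\mathcal{C}_1:=\Phi_{\mathscr{L}}(h,\gamma([0,v_1]))$. It starts at the origin and terminates at
\[
\Phi_{\mathscr{L}}(h,(u_1,0))=\bigl(u_1\cosh(h\sqrt{-\lambda}),\,u_1\sqrt{-\lambda}\sinh(h\sqrt{-\lambda})\bigr),
\]
whose $u$-coordinate exceeds $u_1$ and diverges as $-\lambda\to+\infty$. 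Meanwhile, by the reflection symmetry of the phase plane picture, the first-quadrant branch of $\Gamma_{1,\frac{1+h}{2}}^-$ is a simple arc joining $(u_1,0)$ to $(0,v_0)$ which, together with the coordinate axes, bounds an open region $\mathcal{R}\subset\{u>0,\,v>0\}$ with the origin as a vertex and $\sup_{(u,v)\in\overline{\mathcal{R}}}u=u_1$.

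The crux is to show that, for $-\lambda$ sufficiently large, $\mathcal{C}_1$ enters $\mathcal{R}$ near the origin and then exits transversely through the arc $\Gamma_{1,\frac{1+h}{2}}^-\cap\{v>0\}$. Entry follows by computing the tangent direction of $\mathcal{C}_1$ at the origin via the variational equation at the trivial solution, composed with the hyperbolic matrix of $\Phi_{\mathscr{L}}(h,\cdot)$; applying the identity $\sinh\alpha\cosh\beta+\cosh\alpha\sinh\beta=\sinh(\alpha+\beta)$, this gives direction $\bigl(\sinh\bigl(\tfrac{1+h}{2}\sqrt{-\lambda}\bigr)/\sqrt{-\lambda},\,\cosh\bigl(\tfrac{1+h}{2}\sqrt{-\lambda}\bigr)\bigr)$, whose two components are strictly positive, so $\mathcal{C}_1$ penetrates the interior of $\mathcal{R}$. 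Exit is forced by $u_1\cosh(h\sqrt{-\lambda})>u_1$. Since $\mathcal{C}_1$ lies in the closed first quadrant and meets the coordinate axes only at the origin (because $\gamma(v)$ has strictly positive coordinates for $v\in(0,v_1)$ and the $(\mathscr{L})$-flow preserves strict positivity of both coordinates on the open first quadrant), the only way to leave $\mathcal{R}$ is across $\Gamma_{1,\frac{1+h}{2}}^-\cap\{v>0\}$. Each such crossing at parameter $v^\ast\in(0,v_1)$ yields a positive solution $u^\ast$ with $(u^\ast)'(\tfrac{1-h}{2})>0$, hence distinct from the symmetric solution; positivity of $u^\ast$ in $(0,1)$ is automatic, because $\Gamma_{0,\frac{1-h}{2}}^+$ and $\Gamma_{1,\frac{1+h}{2}}^-$ are built from positive trajectories in the outer nonlinear intervals, while the connecting $(\mathscr{L})$-orbit has $u,v>0$ throughout. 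The reflection $u^\ast(1-\cdot)$ is then the required second asymmetric positive solution.

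The main obstacle is quantifying the threshold $\tilde\lambda(h)$, i.e., making the topological mechanism effective by controlling the relative positions of the two $\lambda$-dependent curves $\mathcal{C}_1$ and $\Gamma_{1,\frac{1+h}{2}}^-$ and ensuring transversality of the crossing. This is where the technical device of \cite{LGTZ-2014} enters: it exchanges the heuristic balance between the exponential growth $e^{h\sqrt{-\lambda}}$ driving $\mathcal{C}_1$ and the at-most polynomial (in $\sqrt{-\lambda}$) growth of the geometric features of $\mathcal{R}$ for an effective quantitative estimate that pins down $\tilde\lambda(h)$.
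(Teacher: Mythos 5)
Your overall strategy is the paper's own phase--plane shooting argument in a dual formulation: instead of following the paper in computing the connection times along linear orbits between $\Gamma_{0,\frac{1-h}{2}}^+$ and $\Gamma_{1,\frac{1+h}{2}}^-$ (which cover an interval $[\a,+\infty)$, so that it suffices to drive the minimal time $\a$ below $h$ as $\l\downarrow-\infty$), you push the first-quadrant branch of $\Gamma_{0,\frac{1-h}{2}}^+$ forward by the time-$h$ linear flow and look for a topological crossing with $\Gamma_{1,\frac{1+h}{2}}^-$. The entry part of your argument is sound: the tangent direction of $\mathcal{C}_1$ at the origin is indeed $\bigl(\sinh(\tfrac{1+h}{2}\sqrt{-\l})/\sqrt{-\l},\,\cosh(\tfrac{1+h}{2}\sqrt{-\l})\bigr)$, the open first quadrant is forward invariant under $(\mathscr{L})$, and a crossing of the open arc does produce an asymmetric positive solution whose reflection gives the second one.

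The gap is in the exit step. Your claim that $\sup_{(u,v)\in\overline{\mathcal{R}}}u=u_1$ is false: the first-quadrant branch of $\Gamma_{1,\frac{1+h}{2}}^-$ is the reflection of $\gamma([v_1,v_0])$, and for $v$ slightly above $v_1$ the orbit through $(0,v)$ has turning point $u_0(v)>u_1$ reached at a time slightly before $\tfrac{1-h}{2}$, so $\gamma(v)$ has $u$-coordinate $u_0(v)-O\bigl((v-v_1)^2\bigr)>u_1$; the bounding arc bulges strictly to the right of $u=u_1$ near $(u_1,0)$. Consequently ``exit is forced by $u_1\cosh(h\sqrt{-\l})>u_1$'' does not follow; note also that if it did, your argument would produce two asymmetric solutions for \emph{every} $\l<0$ and every $h$, which is stronger than the statement and inconsistent with the subcritical pitchfork at a finite $\l_{h,b}$. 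The correct exit condition is $u_1\cosh(h\sqrt{-\l})>\sup_{\mathrm{arc}}u$, and since $\sup_{\mathrm{arc}}u\leq u_0$, everything reduces to showing that $u_0/u_1$ stays bounded as $\l\downarrow-\infty$. That is precisely the heart of the paper's proof: the rescaling $\kappa_i(\l)=u_i(-\l)^{-1/(p-1)}$, the lower bound \eqref{d.17}, and the time-map identities \eqref{d.5} yield $\kappa_1(\l)<\kappa_0(\l)<M(\l_*)$ for $\l\leq\l_*$, hence the bound \eqref{d.18} on $u_0/u_1$, which makes the connection time $\tfrac{1}{\sqrt{-\l}}\ln\bigl(\tfrac{\hat u}{u_1}+\sqrt{(\tfrac{\hat u}{u_1})^2-1}\bigr)$ tend to $0$ (equivalently, makes $u_1\cosh(h\sqrt{-\l})/u_0\to+\infty$). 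You defer exactly this estimate to an unspecified ``technical device of \cite{LGTZ-2014}'', so the quantitative core of the proof is missing, and the qualitative claim you substitute for it is incorrect.
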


\begin{proof}	
According the previous analysis, the set
\begin{equation}
\label{d.10}
		\Gamma_{0,\frac{1-h}{2}}^+\cap \{(u,v)\;:\;\;v\geq\sqrt{-\l}u\geq 0\}
\end{equation}
is the piece  of $\Gamma_{0,\frac{1-h}{2}}^+$ in the first quadrant connecting $(0,0)$ with $W^{u}$.
Similarly,
\begin{equation}
\label{d.11}
		\Gamma_{1,\frac{1+h}{2}}^-\cap \{(u,v)\;:\;\; v \geq\sqrt{-\l}u\geq 0 \}
\end{equation}
is the piece of $\Gamma_{1,\frac{1+h}{2}}^-$ connecting $(0,v_0)$ with $W^{u}$. According to the nature of
the flow of $(\mathscr{L})$, there exists $v_*>0$ such that, for every  $\tilde v\in(0,v_*)$, the level line of the linear system
\begin{equation}
\label{d.12}
		\mc{E}_{\mathscr{L}}(u,v)=\frac{1}{2}v^2+\frac{\lambda}{2}u^2=\frac{1}{2}\tilde v^2
\end{equation}
intersects both \eqref{d.10} and \eqref{d.11}. By continuous dependence, the necessary time to connect \eqref{d.10} with \eqref{d.11} through \eqref{d.12} grows to infinity as $\tilde v\downarrow 0$.
	\par
On the other hand, in the portion of the first quadrant in between the $u$-axis and  $W^u$, the sets  \begin{equation}
\label{d.13}
		\Gamma_{0,\frac{1-h}{2}}^+\cap \{(u,v)\;:\;\; 0\leq v\leq\sqrt{-\l}u\}
\end{equation}
and
\begin{equation}
\label{d.14}
		\Gamma_{1,\frac{1+h}{2}}^-\cap \{(u,v)\;:\;\; 0\leq v\leq\sqrt{-\l}u\}	
\end{equation}
are the two arcs of curve of $\Gamma_{0,\frac{1-h}{2}}^+$ and $\Gamma_{1,\frac{1+h}{2}}^-$, respectively, connecting the portion of $W^{u}$ in the first quadrant with the point $(u_1,0)$ defined in \eqref{d.5}. Note that
$$
   (u_1,0)\in \Gamma_{1,\frac{1-h}{2}}^+\cap \Gamma_{1,\frac{1+h}{2}}^-.
$$
By construction, the interval of times necessary to connect \eqref{d.13} to \eqref{d.14} with the orbits of ($\mathscr{L}$) passing through some point $(\tilde u,0)$ with $\tilde u \in (0,u_1)$ is bounded. Therefore, by the continuity of the time map associated to $(\mathscr{L})$, the connection times between $\Gamma_{0,\frac{1-h}{2}}^+$ and $\Gamma_{1,\frac{1+h}{2}}^-$ within the first quadrant cover an interval of the form $[\a,+\infty)$ for some $\a>0$; $\a$ being the minimum connection time between these curves. To complete the proof of the theorem it suffices to show that $\a<h$ for sufficiently large $|\l|=-\l$.
\par
Let $(\hat u,\hat v)$ denote the crossing point of \eqref{d.14} with the orbit of ($\mathscr{L}$)
through the point $(u_1,0)$
\begin{equation}
\label{d.15}
	\mathscr{E}_{\mathscr{L}}(u,v)=\frac{1}{2}v^2+\frac{\lambda}{2}u^2=\frac{\l}{2}u_1^2,
\end{equation}
which is the crossing point between the green and the blue lines on the right picture of Figure \ref{Fig5}.
The connection time between $(u_1,0)$ and $(\hat u,\hat v)$  along \eqref{d.15} is given by
$$
T_{\mathscr L}(\l,u_1,\hat u)=\int_{u_1}^{\hat u}\frac{du}{\sqrt{\l(u_1^2-u^2)}}=\frac{1}{\sqrt{-\l}}\int_1^{\frac{\hat u}{u_1}}\frac{d\t}{\sqrt{-1+\t^2}},
$$
or, equivalently,
\begin{equation}
\label{d.16}
T_{\mathscr L}(\l,u_1,\hat u) =
\tfrac{1}{\sqrt{-\l}}\ln\Big(\tfrac{\hat u}{u_1}+\sqrt{\left(\tfrac{\hat{u}}{u_1}\right)^2-1}\Big).
\end{equation}
Next, we will show that the quotient $\frac{\hat u}{u_1}$ is bounded above. Indeed, coming back to the definition  of $u_0$ and $u_1$ in \eqref{d.5}, it is apparent that $u_0>u_1$. Moreover, since the denominator of the integrands in \eqref{d.5},
$$
   \sqrt{\l(1-s^2)+\tfrac{2}{p+1}u_1^{p-1}(1-s^{p+1})}
$$
is defined for all $s\in (0,1)$, we find that  $\l+\tfrac{2}{p+1}u_1^{p-1}\geq 0$ and, hence,
\begin{equation}
\label{d.17}
	u_0>u_1\geq \left(\tfrac{p+1}{2}\right)^{\frac{1}{p-1}}(-\l)^{\frac{1}{p-1}}.
\end{equation}
Subsequently, for every $\l<0$, we set
$$
   \kappa_0(\l):= u_0 (-\l)^{\frac{-1}{p-1}},\quad  \kappa_1(\l):= u_1 (-\l)^{\frac{-1}{p-1}}.
$$
Note that, by construction, $u_0$ and $u_1$ also vary with $\l$, as the parameter $\l$ appears in the
formulation of \eqref{1.1}. Then,
$$
        u_0=\kappa_0(\l)(-\l)^{\frac{1}{p-1}},\quad u_1=\kappa_1(\l)(-\l)^{\frac{1}{p-1}},
$$
and, thanks to \eqref{d.17},
$$
   \kappa_0(\l)>\kappa_1(\l)>\left(\tfrac{p+1}{2}\right)^{\frac{1}{p-1}}.
$$
On the other hand, it follows from \eqref{d.5} that
\begin{equation*}
\begin{split}
 \frac{1-h}{2}\,\sqrt{-\l}&= \int_0^1\frac{ds}{\sqrt{s^2-1+\frac{2}{p+1}\kappa_1^{p-1}(\l)(1-s^{p+1})}}
\\&=2 \int_0^1\frac{ds}{\sqrt{s^2-1+\frac{2}{p+1}\kappa_0^{p-1}(\l)(1-s^{p+1})}}.
\end{split}
\end{equation*}
As these integrals are decreasing with respect to $\kappa_0(\l)$ and $\kappa_1(\l)$, $\frac{1-h}{2}\sqrt{-\l}$ decreases with $\l<0$ for fixed $h$, and
$$
\lim_{\t\uparrow +\infty}  \int_0^1\frac{ds}{\sqrt{s^2-1+\theta (1-s^{p+1})}}=0,
$$
it becomes apparent that, for every $\l_*<0$, there exists a constant $M(\l_*)>0$ such that
$$
	\left(\tfrac{p+1}{2}\right)^{\frac{1}{p-1}}<\kappa_1(\l)<\k_0(\l)<M(\l_*)\;\;
\hbox{for all}\;\; \l\in (-\infty,\l_*].
$$
Thus,
\begin{equation}
\label{d.18}
		\frac{\hat u}{u_1}<\frac{u_0}{u_1}=\frac{\kappa_0(\l)}{\kappa_1(\l)}
      <M(\l_*)\left(\frac{2}{p+1}\right)^{\frac{1}{p-1}}\qquad\hbox{for all}\;\l\leq\l_*.
\end{equation}
Therefore, by \eqref{d.18}, it follows from \eqref{d.16} that there exists $\tilde\l(h)$ such that
$$
	T_{\mathscr L}(\l,u_1,\hat u)<h\;\, \hbox{for all}\;\; \l\in (-\infty,\tilde\l(h)].
$$
The second asymmetric solution is obtained either by reflection around $x=0.5$ of the one we have just constructed, or repeating the previous argument in the fourth quadrant. This ends the proof.
\end{proof}

\setcounter{equation}{0}

\section{A multiplicity result for $\l>0$}
\label{sec5}

\noindent In this section, we analyze the existence of, at least, two positive asymmetric solutions of \eqref{1.1} when $\l>0$ and $h$ is sufficiently close to $1$. As in Section \ref{sec2.2}, to treat the case $\l>0$ it is better to consider, instead of \eqref{2.2}, the equivalent systems
\begin{equation}
\label{5.1}
	(\mathscr{N}_\mathrm{eq})\;\; \left\{
	\begin{array}{ll}
		u'=\sqrt{\l}\,v,\\[3pt]
		v'=-\sqrt{\l}\,u-\frac{1}{\sqrt{\l}}\, u^{p},
	\end{array}
	\right.\qquad
	(\mathscr{L}_\mathrm{eq})\;\; \left\{
	\begin{array}{ll}
		u'=\sqrt{\l}\,v,\\[3pt]
		v'=-\sqrt{\l}\,u.
	\end{array}
	\right.
\end{equation}
If $\l>0$, there is no any restriction on the location of the maxima of the positive
solutions of \eqref{1.1} in the interval $(0,1)$. According to Section \ref{sec2.2}, the unique symmetric solution  reaches its maximum at $x=0.5$, where $a(0.5)=0$ always.
\par
However, for asymmetric positive solutions, one can study separately the case where the maximum is reached at $a^{-1}(0)$ and the case where it is attained at $a^{-1}(1)$. In this paper, we restrict ourselves to deal with the special case where, for every positive solution $u$ of \eqref{1.1},
\begin{equation}
	\label{5.2}
	x_m\in a^{-1}(0)=(\tfrac{1-h}{2},\tfrac{1+h}{2}),\quad\hbox{where}\;\; u(x_m)=\|u\|_{\infty},
\end{equation}
because this will provide us with two asymmetric solutions for $\l<\pi^2$ arbitrarily close to
$\pi^2$ for  $h$  sufficiently close to $1$, and this is the first multiplicity result of this nature
available in the literature; suggested  by the numerical experiments of \cite{CLGT-2024}. In such a case, we should first connect a point in the positive $v$-semiaxis with a point in the first quadrant through an energy level line of $(\mathscr{N}_{\rm eq})$ in the time interval $[0,\tfrac{1-h}{2}]$. For every $\t_0\in(0,\pi/2)$, the necessary time to link a point $(0,v_0)$ with a point $(u_{\t_0},v_{\t_0})$ lying in the line $v= (\tan\t_0)u$ through the orbit of $(\mathscr{E}_{\mathscr{N}_{\rm eq}})$
$$
v^2+u^2+\frac{2}{\l(p+1)}u^{p+1}=u_{\t_0}^2 \tan^2\t_0 +u_{\t_0}^2+\frac{2}{\l(p+1)}u_{\t_0}^{p+1}
$$
is given  by
\begin{equation*}
T_{\mathscr{N}}(\l,0,u_{\t_0}):=\frac{1}{\sqrt{\l}}\int_0^1\frac{ds}
{\sqrt{\tan^2\t_0+1-s^2+\frac{2u_{\t_0}^{p-1}}{\l(p+1)}(1-s^{p+1})}}
\end{equation*}
(see \eqref{ii.15}). Since this connection should be accomplished in time $\tfrac{1-h}{2}$, we impose that
\begin{equation}
	\label{5.3}
	T_{\mathscr{N}}(\l,0,u_{\t_0})=\frac{1-h}{2}.
\end{equation}
Secondly, by \eqref{5.2} and for some $\t_1\in(0,\pi/2)$, the point $(u_{\t_0},v_{\t_0})$ reached at time $x=\tfrac{1-h}{2}$ should be connected at time $x=\tfrac{1+h}{2}$ with a point of the fourth quadrant $(u_{\t_1},v_{\t_1})$ lying in the line $v=-(\tan\t_1)u$, through the energy line of linear system $(\mathscr{L}_{\rm eq})$
\begin{equation*}
	v^2+u^2=v_{\t_0}^2+u_{\t_0}^2=v_{\t_1}^2+u_{\t_1}^2=\|u\|_{\infty}^2,
\end{equation*}
which implies
\begin{equation}
	\label{5.4}
	\cos\t=\frac{u_\t}{\|u\|_{\infty}}\qquad\hbox{if}\;\;\t\in\{\t_1,\t_2\}.
\end{equation}
Since the angular variable component in ($\mathscr{L}_{\rm eq}$) satisfies $\t'(x)=-\sqrt{\l}$ (see Section \ref{sec2.2}), it follows that
\begin{equation*}
	-\t_1-\t_0=\int_{\frac{1-h}{2}}^{\frac{1+h}{2}}\t'(x)\, dx=-\int_{\frac{1-h}{2}}^{\frac{1+h}{2}}\sqrt{\l}\, dx=-h\sqrt{\l},
\end{equation*}
which is equivalent to ask for
\begin{equation}
	\label{5.5}
	\frac{\t_0+\t_1}{\sqrt{\l}}=h.
\end{equation}
Finally, by the symmetry of $a_h(x)$, in order to connect $(u_{\t_1},v_{\t_1})$ with the negative $v$-semiaxis,
\begin{equation}
	\label{5.6}	 T_{\mathscr{N}}(\l,0,u_{\t_1})=\frac{1}{\sqrt{\l}}\int_0^1\frac{ds}{\sqrt{\tan^2\t_1+1-s^2+\frac{2u_{\t_1}^{p-1}}{\l(p+1)}(1-s^{p+1})}}=\frac{1-h}{2}
\end{equation}
should hold. By \eqref{5.4} and recalling that
$$
   \tan^2\t+1=\tfrac{1}{\cos^2\t},
$$
integrals $T_{\mathscr{N}}(\l,0,u_{\t_0})$ and $T_{\mathscr{N}}(\l,0,u_{\t_1})$ can be rewritten for $\t\in\{\t_0,\t_1\}$ as
\begin{equation}
	\label{5.7}
	\begin{split}
		T_{\mathscr{N}}(\l,0,u_\t)&=\frac{1}{\sqrt{\lambda}}
		\int_{0}^{1}\frac{ds}{\sqrt{\frac{1}{\cos^2\t} -s^2 +
				\frac{2\|u\|_{\infty}^{p-1} \cos^{p-1}\t}{\lambda(p+1)}\left(1-s^{p+1}\right)}}\\&=\frac{\|u\|_{\infty}^{-\frac{p-1}{2}}}{\sqrt{\l}}\int_{0}^{1}\frac{ds}{\sqrt{\frac{1-s^2\cos^2\t}{\|u\|_{\infty}^{p-1}\cos^2\t}+
				\frac{2 \cos^{p-1}\t}{\lambda(p+1)}\left(1-s^{p+1}\right)}}.
	\end{split}
\end{equation}
Subsequently, we will denote
$$
   {S}_{\mathscr{N}}(\l,\|u\|_\infty,\t):= {T}_{\mathscr{N}}(\l,0,u_\t)\quad \hbox{if}\;\;
   \t\in\{\t_0,\t_1\}.
$$
According to \eqref{5.3}, \eqref{5.5}, \eqref{5.6} and \eqref{5.7}, for any fixed $\l\in(0,\pi^2]$, the existence of a positive solution of \eqref{1.1} satisfying \eqref{5.2} is guaranteed if, and only if,
\begin{equation}
	\label{5.8}	
	\begin{cases}
		\displaystyle\frac{\t_0+\t_1}{\sqrt{\l}}= h,\\[5pt]
		\displaystyle S_{\mathscr{N}}(\l,\|u\|_{\infty},\t_0)= S_{\mathscr{N}}(\l,\|u\|_{\infty},\t_1)=\frac{1-h}{2}.
	\end{cases}
\end{equation}
A solution of \eqref{5.8} with $\t_0=\t_1$ corresponds to a symmetric positive solution of \eqref{1.1}, whose existence and uniqueness has been already established by Theorem \ref{th2.1}.
On the other hand, when \eqref{5.8} holds for $\t_0=\a\neq\b=\t_1$, an asymmetric positive solution of \eqref{1.1} exists. In such a case, by the symmetry of the problem \eqref{1.1}, a second asymmetric positive solution arises just by taking $\t_0=\b$ and $\t_1=\a$. Thus, the asymmetric positive solutions of \eqref{1.1} appear always by pairs and these pairs of asymmetric solutions are, by construction, symmetric with respect to $x=0.5$. Figure~\ref{fig6} shows the three arcs of trajectory of a positive solution of \eqref{1.1} under the assumption \eqref{5.2}.

\begin{figure}[htbp]
	\centering
	\includegraphics[scale=0.16]{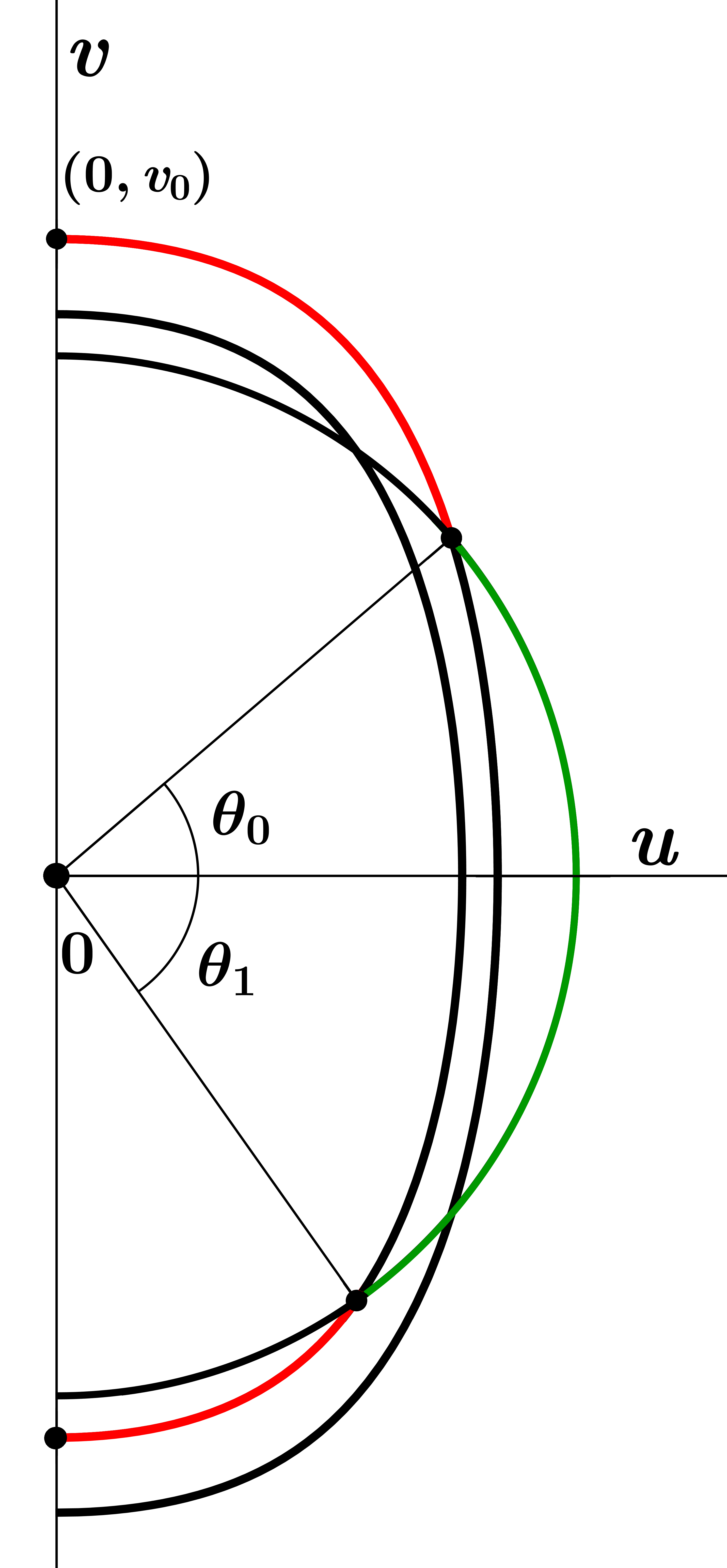}
	\caption{An asymmetric positive solution of \eqref{1.1} ($\t_0\neq\t_1$) satisfying \eqref{5.2}.}
	\label{fig6}
\end{figure}

In order to determine the asymmetric solutions, we need to study the function $S_{\mathscr{N}}(\l,\|u\|_{\infty},\t)$ for a fixed $\l>0$ and  $\theta\in (0,\pi/2)$. To simplify the notations, for any positive solution $u$ of \eqref{1.1}, we denote
$$
   R:=\|u\|_{\infty}>0,
$$
and consider the two-variables re-scaled function
\begin{equation}
\label{5.9}
\begin{split}
\phi(R,\t)&:=2\sqrt{\l}\,S_{\mathscr{N}}(\l,R,\t)\\&=2\int_{0}^{1}\frac{ds}{\sqrt{\frac{1}{\cos^2\t} -s^2 +
\frac{2R^{p-1} \cos^{p-1}\t}{\lambda(p+1)}\left(1-s^{p+1}\right)}}\\&=2R^{-\frac{p-1}{2}}\int_{0}^{1}\frac{ds}{\sqrt{\frac{1-s^2\cos^2\t}{R^{p-1}\cos^2\t}+
\frac{2 \cos^{p-1}\t}{\lambda(p+1)}\left(1-s^{p+1}\right)}}.
\end{split}
\end{equation}

The next lemma collects some basic properties of $\phi(R,\t)$.

\begin{lemma}
\label{le5.1}
The continuous function
$\phi:\,(0,+\infty)\times[0,\pi/2)\rightarrow(0,+\infty)$ defined by \eqref{5.9} is of class $\mc{C}^1$ in $(0,+\infty)\times(0,\pi/2)$ and it satisfies:
\begin{itemize}
\item[\rm (i)] it is decreasing with respect to $R>0$,
\begin{equation*}
\lim_{R\uparrow+\infty}\phi(R,\theta) =0, \;\;\hbox{and}\quad \phi(R,\theta) <
\lim_{R\downarrow 0}\phi(R,\theta) = {\pi} - 2\theta,
\end{equation*}
\item[\rm (ii)] it can be extended by continuity to $\pi/2$ by taking $\phi(\cdot,\pi/2)=0$,
\item[\rm (iii)] it has a continuous derivative with respect to $\t\in(0,\pi/2]$, and
\begin{equation*}
\frac{\p\phi}{\p\theta}(\cdot,\pi/2)=-2.
\end{equation*}
\end{itemize}
\end{lemma}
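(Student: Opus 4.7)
My plan is to exploit the two equivalent representations of $\phi(R,\theta)$ in \eqref{5.9}: the first one is convenient for monotonicity and the limit as $R\downarrow 0$, while a suitable rescaling is tailored to the degeneracy as $\theta\uparrow\pi/2$.

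For part (i), strict monotonicity in $R$ is immediate: the only $R$-dependent coefficient in the radicand, $\tfrac{2R^{p-1}\cos^{p-1}\theta}{\lambda(p+1)}$, multiplies the non-negative factor $1-s^{p+1}$ and is strictly increasing in $R$ when $p>1$. The limit $\phi(R,\theta)\to 0$ as $R\uparrow+\infty$ follows from dominated convergence with the integrable majorant $1/\sqrt{\sec^2\theta-s^2}$, noting that the integrand tends pointwise to $0$. The limit as $R\downarrow 0$ is obtained by monotone convergence: the integrand increases pointwise to $1/\sqrt{\sec^2\theta-s^2}$, and the substitution $s=\sec\theta\sin u$ reduces the resulting integral to $\int_0^{\pi/2-\theta}du=\tfrac{\pi}{2}-\theta$. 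Multiplying by $2$ yields $\pi-2\theta$, and the strict inequality follows from the strict monotonicity in $R$. For part (ii), the pointwise lower bound $\sec^2\theta-s^2\geq \tan^2\theta$ on $[0,1]$, used in the first representation of \eqref{5.9}, yields $\phi(R,\theta)\leq 2/\tan\theta$, which vanishes as $\theta\uparrow\pi/2$ uniformly in $R$ on bounded subsets of $(0,+\infty)$.

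Part (iii) is the genuine obstacle. I would perform the change of variable $\epsilon:=\tfrac{\pi}{2}-\theta$, so that $\cos\theta=\sin\epsilon$, and factor $\csc^2\epsilon$ out of the radicand to obtain
\[
\phi(R,\tfrac{\pi}{2}-\epsilon)=2\sin\epsilon\,I(R,\epsilon),\qquad I(R,\epsilon):=\int_0^1\frac{ds}{\sqrt{1-s^2\sin^2\epsilon+\tfrac{2R^{p-1}\sin^{p+1}\epsilon}{\lambda(p+1)}(1-s^{p+1})}}.
\]
The integrand of $I(R,\cdot)$ is smooth on a neighborhood of $\epsilon=0$ (no boundary singularity survives there), is uniformly bounded for small $\epsilon$ (e.g. by $\sqrt{2}$ once $\sin^2\epsilon\leq 1/2$), and converges pointwise to $1$ as $\epsilon\downarrow 0$. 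Hence $I(R,\cdot)$ is $\mc{C}^1$ near $0$ with $I(R,0)=1$, differentiation under the integral sign being licit by dominated convergence. Reversing the substitution,
\[
\frac{\partial\phi}{\partial\theta}(R,\tfrac{\pi}{2}-\epsilon)=-2\cos\epsilon\,I(R,\epsilon)-2\sin\epsilon\,\frac{\partial I}{\partial\epsilon}(R,\epsilon),
\]
and evaluating at $\epsilon=0$ produces the required value $-2$.

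Finally, the $\mc{C}^1$ regularity of $\phi$ on the interior $(0,+\infty)\times(0,\pi/2)$ is routine: differentiation under the integral sign is legitimate because the integrand and its partials in $R$ and $\theta$ inherit an integrable majorant from $1/\sqrt{\sec^2\theta-s^2}$ near $s=1$, the strict positivity of the nonlinear contribution ensuring the singularity at $s=1$ is not worsened upon differentiation. The delicate point of the lemma is thus the boundary identification carried out in the previous paragraph; once it is in place, the continuity of $\partial\phi/\partial\theta$ up to $\theta=\pi/2$ is automatic from the expression just displayed, since $I(R,\cdot)$ and $\partial_\epsilon I(R,\cdot)$ are continuous at $\epsilon=0$.
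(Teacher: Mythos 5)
Your proposal is correct and follows essentially the same route as the paper: monotonicity of the radicand in $R$ together with monotone/dominated convergence for part (i), a uniform bound forcing $\phi(R,\theta)\to 0$ as $\theta\uparrow\pi/2$ for part (ii), and a direct evaluation of $\partial\phi/\partial\theta$ at $\theta=\pi/2$ for part (iii). The only difference is organizational: the paper differentiates \eqref{5.9} to get the explicit formula \eqref{5.11} and evaluates it at $\theta=\pi/2$, whereas you substitute $\epsilon=\pi/2-\theta$, factor the vanishing $\sin\epsilon$ out of the integral, and read off the value $-2$ from the product rule --- an equivalent computation that arguably makes the limit more transparent.
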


\begin{proof}
First observe that $\phi$ is also defined for $\t=0$ and $R>0$, with
$$
\phi(R,0)=2
\int_0^1\frac{ds}{\sqrt{1 -s^2 +\frac{2 R^{p-1}}{\lambda(p+1)}\left(1-s^{p+1}\right)}}<2\arcsin 1=\pi.
$$
Moreover, as the integrand is continuous in its domain of definition, $\phi$ is of class  $\mc{C}^1$. The fact that $\phi(R,\t)$ is positive is obvious by definition. Moreover, since $p>1$, it follows from \eqref{5.9} that, for every $\t\in[0,\pi/2)$, the mapping $R\mapsto\phi(R,\t)$ is decreasing and satisfies
$$
  \lim_{R\uparrow+\infty}\phi(R,\theta) =0.
$$
Also, for every $\t \in [0,\frac{\pi}{2})$,
$$
\lim_{R\downarrow 0}\phi(R,\t)
=2\int_0^1\frac{ds}{\sqrt{\frac{1}{\cos^2\t}-s^2}}=2\int_0^{\cos\t}\frac{d\xi}{\sqrt{1-\xi^2}}=\pi-2\t.
$$
Actually, since $\phi(R,\t)$ is uniformly bounded above by $\pi-2\t$, its asymptotic profile as $R\uparrow+\infty$ is given by
\begin{equation}
\label{5.10}
\phi(R,\t) \!\approx\! R^{-\frac{p-1}{2}}\!\!\left(\tfrac{2\l(p+1)}{\cos^{p-1}\t}\right)^{\frac{1}{2}}\!\!\!\int_{0}^{1} \!\!\! \frac{ds}{\sqrt{1-s^{p+1}}}\!=\!
R^{-\frac{p-1}{2}}\!\!\left(\tfrac{2\l(p+1)}{\cos^{p-1}\t}\right)^{\frac{1}{2}}\!\! B(\tfrac{1}{p+1},\tfrac{1}{2}),
\end{equation}
where $B$ is the Euler beta-function (see \cite[Section 7.1]{LGMHZ-2023}). This ends the proof of Part (i). Part (ii) holds because
$$
\lim_{\t\uparrow \tfrac{\pi}{2}}\phi(\cdot,\t)=0.
$$
Finally, Part (iii) follows from the fact that, for every $\t\in(0,\pi/2)$,
\begin{equation}
\label{5.11}
\frac{\p\phi}{\p\t}(R,\t)\!=\! 2R^{-\frac{p-1}{2}}\sin\t\!\!\bigintsss_{0}^{1}\frac{\frac{2(p-1)\cos^{p+1}\t}{\l(p+1)}(1-s^{p+1})-\frac{1}{R^{p-1}}}{\left(\frac{1-s^2\cos^2\t}{R^{p-1}}+\frac{2 \cos^{p+1}\t}{\lambda(p+1)}\left(1-s^{p+1}\right)\right)^{3/2}}\,ds.
\end{equation}
Indeed, for every $R>0$, $\frac{\p\phi}{\p\theta}(R,\pi/2)=-2$. This ends the proof.
\end{proof}

Figure~\ref{Fig7} left shows the graphs of the map $\t\mapsto \phi(R,\t)$ for a series of increasing values of $R>0$, $R_1<R_2<R_3$. For sufficiently small $R>0$  the map is decreasing, while, for larger values of $R$,  there is a point of maximum in the interval $(0,\pi/2)$. To generate the figure, we have chosen $p=3,$ $\l=6$ and $R_1=4,$ $R_2=20,$ $R_3=80$, respectively. Note that all the graphs have a common tangent
at $\pi/2$ as proved in Lemma \ref{le5.1} (iii). The right plot of Figure~\ref{Fig7} shows the
graph of the function $\phi(\cdot,0)$ for $p=3$ and $\lambda=6$.

\begin{figure}[htbp]
	\centering
	\includegraphics[scale=0.28]{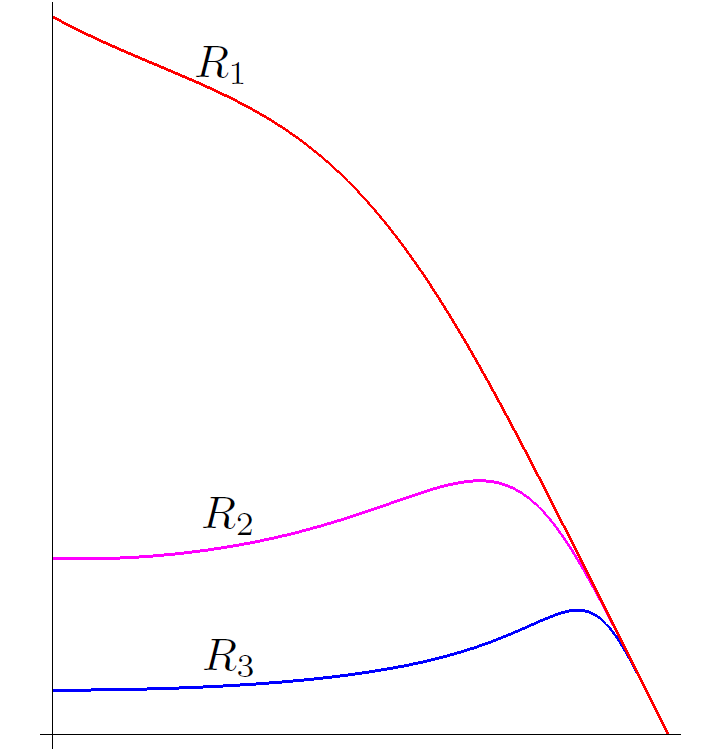}\quad
	\includegraphics[scale=0.2]{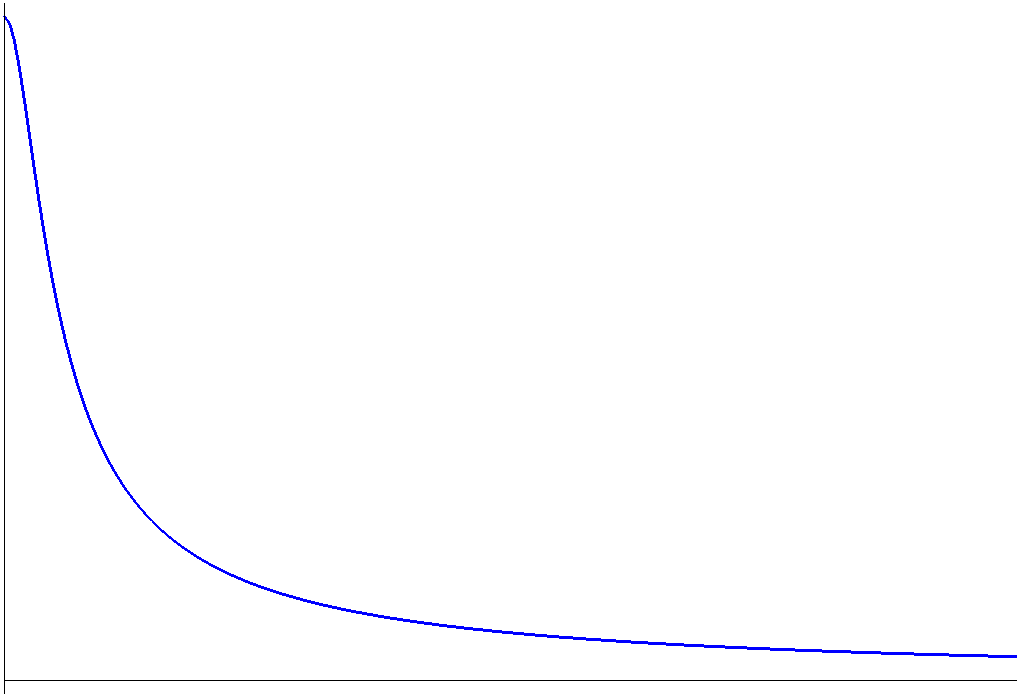}
	\caption{Graphs of the maps $\theta\mapsto \phi(R_i,\theta)$ for $i\in\{1,2,3\}$ (left), and graph of the map $R\mapsto\phi(R,0)$ on the interval $[0,80]$ (right).}
	\label{Fig7}
\end{figure}

On the other hand, when trying to determine the limit of the derivative as $\t\downarrow 0$ one should be aware that the integral
\[
\bigintsss_{0}^{1}\frac{\frac{2(p-1)}{\l(p+1)}(1-s^{p+1})-\frac{1}{R^{p-1}}}{\left(\frac{1-s^2}{R^{p-1}}+\frac{2}{\lambda(p+1)}\left(1-s^{p+1}\right)\right)^{3/2}}\,ds
\]
is divergent at $s=1$ and, hence, \eqref{5.11} takes an indeterminate form. So, we always assume $\theta>0$ when analyzing \eqref{5.11}. For every $R>0$ and $\t\in(0,\pi/2]$, it is clear that the sign of \eqref{5.11} is the sign of the integrand's numerator.
\par
According to Theorem \ref{th3.1}, we already know that
$$
   \lim_{h\uparrow 1}R=\lim_{h\uparrow 1}\|u\|_\infty=\infty.
$$
Thus, we need a description of the function $\phi(R,\cdot)$ for large $R$, which is given by
the next result.

\begin{lemma}
\label{le5.2}
Given $a,b$ with $0<a<b<\pi/2$, there exists $R^*=R^*(a,b)$ such that, for every $R>R^*$, the following holds:
\begin{enumerate}
\item[\rm(a)] The function $\phi(R,\cdot)$ is increasing in the interval $[a,b]$.
\item[\rm(b)] The function $\phi(R,\cdot)$ has a point of maximum at $\t^*\in(b,\pi/2)$. Moreover, it is the absolute maximum point.
\item[\rm(c)] There exists $\t_*\in[0,a]$ such that
\[
\phi(R,\t_*)=\min_{\t\in[0,a]}\phi(R,\t)>0=\phi(R,\pi/2).
\]
\end{enumerate}
\end{lemma}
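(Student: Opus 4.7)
The proof rests on analyzing the sign of
$$I(R,\theta) := \int_0^1 \frac{N(R,\theta,s)}{D(R,\theta,s)^{3/2}}\,ds,\qquad N := \tfrac{2(p-1)\cos^{p+1}\theta}{\lambda(p+1)}(1-s^{p+1}) - R^{-(p-1)},$$
where $D$ denotes the bracketed quantity in the denominator of \eqref{5.11}. By \eqref{5.11}, the sign of $\partial_\theta \phi(R,\theta)$ coincides with that of $I(R,\theta)$ on $(0,\pi/2)$. The numerator $N$ vanishes at
$$s_R(\theta) := \Bigl(1 - \tfrac{\lambda(p+1)}{2(p-1)\cos^{p+1}\theta\cdot R^{p-1}}\Bigr)^{1/(p+1)},$$
which tends to $1$ as $R\to\infty$, and $N$ is positive on $[0,s_R(\theta))$ and negative on $(s_R(\theta),1]$. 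The overall strategy is to show that the positive bulk of $I$ dominates its small negative tail, uniformly in $\theta$.

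\textbf{Part (a).} Fix a small $\delta>0$ and split $I = I_1 + I_2$ with $I_1,I_2$ the integrals over $[0,1-\delta]$ and $[1-\delta,1]$ respectively. On $[0,1-\delta]$, both $N$ and $D$ are uniformly bounded above and uniformly bounded below away from $0$ for $\theta\in[a,b]$ (using $\cos\theta\geq\cos b>0$), so dominated convergence gives
$$I_1 \longrightarrow L(\theta,\delta) := \int_0^{1-\delta} \frac{(p-1)\sqrt{\lambda(p+1)/2}}{\cos^{(p+1)/2}\theta\,\sqrt{1-s^{p+1}}}\,ds > 0$$
as $R\to\infty$, uniformly on $[a,b]$. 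For $I_2$, one further splits at $s_R(\theta)$: the positive contribution on $[1-\delta,s_R(\theta))$ is bounded by $L(\theta)-L(\theta,\delta)$, where $L(\theta):=\lim_{\delta\downarrow 0}L(\theta,\delta)<\infty$; on $[s_R(\theta),1]$, the bounds $|N|\leq R^{-(p-1)}$, $D\geq(1-s^2\cos^2\theta)/R^{p-1}\geq \sin^2(a)/R^{p-1}$ (from $s\leq 1$ giving $1-s^2\cos^2\theta\geq\sin^2\theta$), and the length $1-s_R(\theta)=O(R^{-(p-1)})$ combine to yield a contribution of size $O(R^{-(p-1)/2})$. Choosing $\delta$ small enough that $L(\theta,\delta)>\tfrac{3}{4}L(\theta)$ uniformly on $[a,b]$, one concludes that $I(R,\theta)\geq \tfrac{1}{2}L(\theta) - o(1) > 0$ for $R$ sufficiently large.

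\textbf{Parts (b) and (c).} The argument of (a) applies verbatim on any compact subinterval of $(0,\pi/2)$, so one may fix $b'\in(b,\pi/2)$ for which $\partial_\theta\phi(R,\cdot)>0$ on $[a,b']$ once $R$ is large. Since $\partial_\theta\phi(R,\pi/2)=-2<0$ by Lemma \ref{le5.1}(iii), continuity forces a critical point $\theta^*\in(b',\pi/2)$, which is a local maximum because $\phi(R,\cdot)$ increases just before $\theta^*$ and decreases just after. For absoluteness, invoke the uniform asymptotic \eqref{5.10}: $\phi(R,\theta) = R^{-(p-1)/2}C_1(\theta)(1+o(1))$ on $[0,b']$, with $C_1(\theta) := \sqrt{2\lambda(p+1)/\cos^{p-1}\theta}\cdot B(\tfrac{1}{p+1},\tfrac{1}{2})$ strictly increasing in $\theta$. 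Since $C_1(b')>C_1(a)=\sup_{[0,a]}C_1$, for $R$ large enough $\phi(R,\theta^*)\geq\phi(R,b')>\sup_{[0,a]}\phi(R,\cdot)$, so $\theta^*$ is the absolute maximum. Finally, (c) is immediate: $\phi(R,\cdot)$ is continuous and strictly positive on $[0,\pi/2)$ by Lemma \ref{le5.1}(i), hence its minimum over the compact set $[0,a]$ is attained and positive, strictly greater than $\phi(R,\pi/2)=0$.

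\textbf{Main obstacle.} The delicate point is the quantitative control of the negative tail of $I$ in part (a): one must simultaneously exploit the short length of the interval $(s_R(\theta),1)$ (order $R^{-(p-1)}$), the smallness of $|N|$ there (bounded by $R^{-(p-1)}$), and the lower bound on $D$ coming from $\sin^2\theta\geq\sin^2 a>0$. The hypothesis $a>0$ is critical, since this lower bound degenerates as $\theta\downarrow 0$; without it the method cannot preclude possible oscillation of $\phi(R,\cdot)$ near $\theta=0$, which is precisely why the lemma is stated only on subintervals bounded away from $0$.
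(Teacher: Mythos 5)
Your proposal is correct and follows essentially the same route as the paper: both establish part (a) by showing that the integral in \eqref{5.11} is eventually positive on compact subintervals of $(0,\pi/2)$ as $R\uparrow+\infty$, and both derive (b) from the sign change of $\frac{\p\phi}{\p\t}$ near $\pi/2$ together with the uniform asymptotic \eqref{5.10}, while (c) is immediate from continuity and positivity. The main difference is one of rigor rather than strategy: where the paper simply invokes the formal limit \eqref{5.12}, you justify it by splitting the integral at $1-\delta$ and at $s_R(\theta)$ and controlling the non-integrable-looking tail via the lower bound $1-s^2\cos^2\theta\geq\sin^2 a$, which is precisely the detail the paper compresses.
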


\begin{proof}
First, note that, fixing  $\t\in[a,b]$, the map $R\mapsto\frac{\p\phi}{\p\t}(R,\t)$ is decreasing and converges to zero as $R\uparrow +\infty$. Indeed, as  $R\uparrow+\infty$, we have that
\begin{equation}
\label{5.12}
\begin{split}
\frac{\p\phi}{\p\t}(R,\t)&\approx R^{-\frac{p-1}{2}}(p-1)\sin\t\left(\tfrac{2\l(p+1)}{\cos^{p+1}\t}\right)^{\frac{1}{2}}\int_{0}^{1} \frac{ds}{(1-s^{p+1})^{\frac{1}{2}}}\\&=R^{-\frac{p-1}{2}}(p-1)\sin\t\left(\tfrac{2\l(p+1)}{\cos^{p+1}\t}\right)^{\frac{1}{2}}B(\tfrac{1}{p+1},\tfrac{1}{2}),
\end{split}
\end{equation}
which provides us with the derivative of \eqref{5.10}. By \eqref{5.12}, there exists $R^*$ such that $\frac{\p}{\p\t}\phi(R,\t)>0$ for every $R>R^*$ and $\t\in[a,b]$, proving Part (a).
\par
Now, by Part (a) and recalling that $\phi(R,\pi/2)=0$, the existence of a point of maximum at $(b,\pi/2)$ holds, which proves Part (b). Moreover, by \eqref{5.10} and \eqref{5.12}, it becomes apparent that, for sufficiently large $R$, $\t^*$ is a point of absolute maximum. Finally, Part (c) follows by the continuity of $\phi(R,\t)$. This ends the proof.
\end{proof}

Thanks to  Lemma \ref{le5.2}, for every $\lambda >0$ and $\varepsilon>0$, there exists $R^*>0$
such that, for every $R>R^*$, $\phi(R,\cdot)$ is increasing in $[\varepsilon,\frac{\pi}{2}-\varepsilon]$ and all the critical points of $\phi(R,\cdot)$
lie on $[0,\varepsilon)\cup(\frac{\pi}{2}-\varepsilon,\frac{\pi}{2})$. For such $R$'s,  we set
$$
    \phi_M=\phi_M(R):= \max_{\theta\in [0,\tfrac{\pi}{2}]} \phi(R,\theta).
$$
By Lemma \ref{le5.2} (b), such a value is achieved at some $\t\in [\pi/2-\varepsilon,\pi/2]$. Accordingly, we set
$$
\theta_M=\theta_M(R,\varepsilon):=\min\{\theta\in [\pi/2-\varepsilon,\pi/2):\;\phi(R,\theta)=\phi_M\}.
$$
We also denote by
$\t_m= \t_m(R,\varepsilon)\in [0,\varepsilon]$ the maximum value of $\t$ for which $\phi(R,\t)$ has a minimum in $[0,\varepsilon]$, taking $\t_m=0$ if there are no critical points
in $[0,\varepsilon]$. Moreover, we set
$$
\phi_m  =\phi_m(R):=\phi(R,\theta_m)>0, \;\; \bar{\theta}  :=
\min\{\theta\in [\pi/2-\varepsilon,\pi/2):\;
\phi(R,\theta)=\phi_m\}.
$$
With these notations, by Lemma \ref{le5.2}, the next result holds.

\begin{proposition}
\label{prop5.1}
For every  $R>R^*$, the restriction of the function $\phi(R,\cdot)$ to the interval $[\theta_m,\bar{\theta}]$ satisfies the following conditions:
\begin{itemize}
\item[\rm (a)] $\phi(R,\theta_m) = \phi(R,\bar{\theta}) = \phi_m >0$, and $\phi(R,\theta) > \phi_m$ for all $\theta\in (\theta_m,\bar{\theta}).$
\item[\rm (b)] $\phi(R,\cdot)$ is increasing in $[\theta_m,\frac{\pi}{2}-\varepsilon]$.
\item[\rm (c)] $\theta_M\in [\frac{\pi}{2}-\varepsilon,\frac{\pi}{2})$ with
$$
   \phi(R,\theta_M)=\max_{\t\in[\t_m,\t_M]}\phi(R,\t) \geq\phi(R,\tfrac{\pi}{2}-\varepsilon)> \phi_m.
$$
\end{itemize}
\end{proposition}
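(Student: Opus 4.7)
The proposition is an organizational deduction from Lemma \ref{le5.2} combined with the defining properties of $\theta_m$, $\bar\theta$, $\theta_M$, $\phi_m$ and $\phi_M$. My plan is to apply Lemma \ref{le5.2} with $a=\varepsilon$ and $b=\pi/2-\varepsilon$, so that, for every $R>R^*=R^*(\varepsilon,\pi/2-\varepsilon)$, the function $\phi(R,\cdot)$ is strictly increasing on $[\varepsilon,\pi/2-\varepsilon]$, has no critical point in that interval, and attains its absolute maximum at $\theta_M\in(\pi/2-\varepsilon,\pi/2)$. The point $\bar\theta$ will be well-defined because $\phi(R,\pi/2)=0<\phi_m$ while, by part (b), $\phi(R,\pi/2-\varepsilon)>\phi_m$, so the intermediate value theorem produces at least one $\theta\in(\pi/2-\varepsilon,\pi/2)$ with $\phi(R,\theta)=\phi_m$.

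For part (b), I split $[\theta_m,\pi/2-\varepsilon]$ at $\varepsilon$: the piece $[\varepsilon,\pi/2-\varepsilon]$ is Lemma \ref{le5.2}(a), and on $[\theta_m,\varepsilon]$ I exploit the extremal character of $\theta_m$. If $\theta_m=0$, the absence of critical points in $[0,\varepsilon]$, together with $\frac{\p\phi}{\p\theta}(R,\varepsilon)>0$ and the $\mc{C}^1$ regularity from Lemma \ref{le5.1}, prevents $\frac{\p\phi}{\p\theta}(R,\cdot)$ from changing sign on $[0,\varepsilon]$, so it stays positive there. If $\theta_m>0$, any failure of monotonicity on $(\theta_m,\varepsilon)$ would force an interior local maximum at some $\theta^{\ast}$, past which $\phi(R,\cdot)$ would strictly decrease and, since $\frac{\p\phi}{\p\theta}(R,\varepsilon)>0$, would then have to turn upward again, thereby producing a new local minimum in $(\theta^{\ast},\varepsilon)$ and contradicting the maximality of $\theta_m$.

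Part (c) is then immediate: Lemma \ref{le5.2}(b) places $\theta_M$ in $(\pi/2-\varepsilon,\pi/2)$ as a point of absolute maximum, so $\phi(R,\theta_M)=\phi_M\geq\phi(R,\pi/2-\varepsilon)$, and the strict monotonicity from (b) gives $\phi(R,\pi/2-\varepsilon)>\phi(R,\theta_m)=\phi_m$; the identity $\phi(R,\theta_M)=\max_{[\theta_m,\theta_M]}\phi(R,\cdot)$ holds since $\phi_M$ is the global maximum. Finally, for part (a), the two equalities $\phi(R,\theta_m)=\phi_m=\phi(R,\bar\theta)$ come directly from the definitions, and for the strict inequality on $(\theta_m,\bar\theta)$ I split once more at $\pi/2-\varepsilon$: on $(\theta_m,\pi/2-\varepsilon]$ the strict monotonicity from (b) yields $\phi(R,\cdot)>\phi_m$, while on $(\pi/2-\varepsilon,\bar\theta)$ the function $\phi(R,\cdot)-\phi_m$ is positive at $\pi/2-\varepsilon$ and, by the minimality of $\bar\theta$ in the set $\{\theta\in[\pi/2-\varepsilon,\pi/2):\phi(R,\theta)=\phi_m\}$, cannot vanish before $\bar\theta$. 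The one genuinely delicate step is the monotonicity on $[\theta_m,\varepsilon]$, where Lemma \ref{le5.2} offers no direct information and the extremal definition of $\theta_m$ has to be used to exclude an interior local maximum in that sub-interval.
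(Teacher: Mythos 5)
Your proof is correct and follows exactly the route the paper intends: the paper offers no written proof of Proposition \ref{prop5.1}, merely asserting that it follows from Lemma \ref{le5.2} together with the definitions of $\theta_m$, $\bar\theta$, $\theta_M$, $\phi_m$ and $\phi_M$, and your argument is a faithful elaboration of that deduction. The only substantive detail you add beyond what the paper records is the monotonicity of $\phi(R,\cdot)$ on $[\theta_m,\varepsilon]$, which you correctly obtain from the extremal definition of $\theta_m$, the strict positivity of $\frac{\p\phi}{\p\t}(R,\varepsilon)$ from the proof of Lemma \ref{le5.2}(a), and the finiteness of the critical set guaranteed by real analyticity.
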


There is numerical evidence that, for large $R>0$, there is a unique point of maximum
for $\phi(R,\cdot)$, $\t_M$,  and that $\phi(R,\cdot)$ is decreasing in
$[\theta_M,\pi/2]$, i.e., $\t_M$ is the unique $\theta\in [\pi/2-\varepsilon,\pi/2)$ such that
$\phi(R,\theta)=\phi_m$. However, since this assertion has not been proved in the analysis of the function $\phi(R,\cdot)$, we are forced to make the definitions of
$\theta_M$ and $\bar{\theta}$ a bit more intricate.
\par
Figure~\ref{Fig8} shows the main features of Lemma \ref{le5.2} and Proposition \ref{prop5.1}.

\begin{figure}[htbp]
\centering
\includegraphics[scale=0.23]{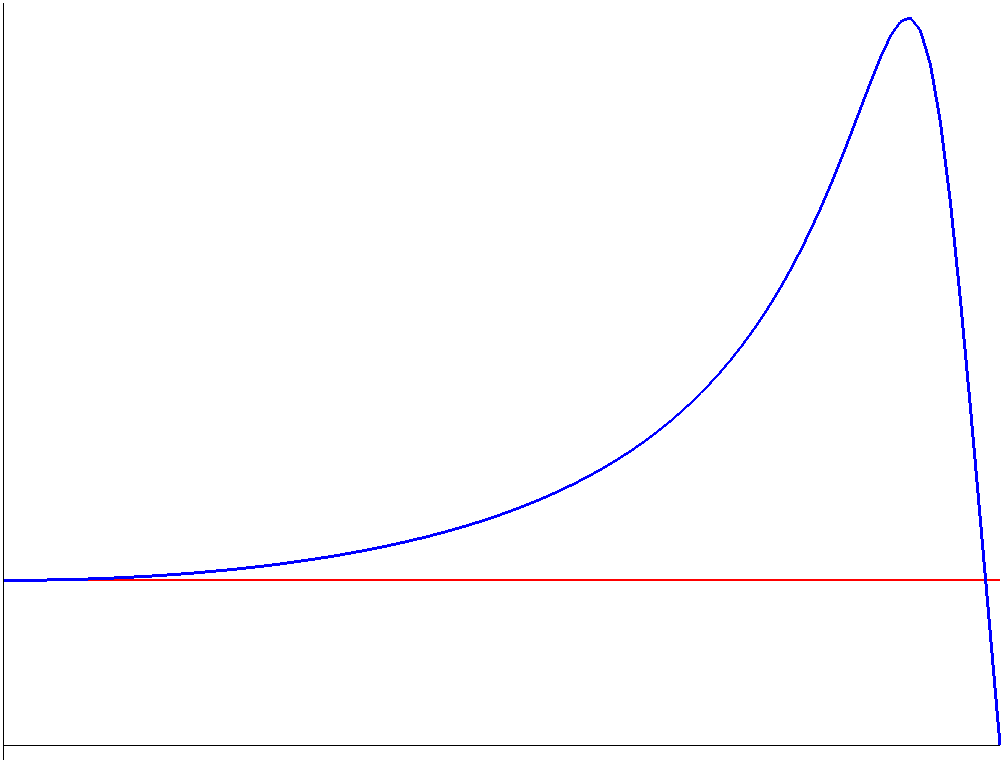}\qquad
\includegraphics[scale=0.2]{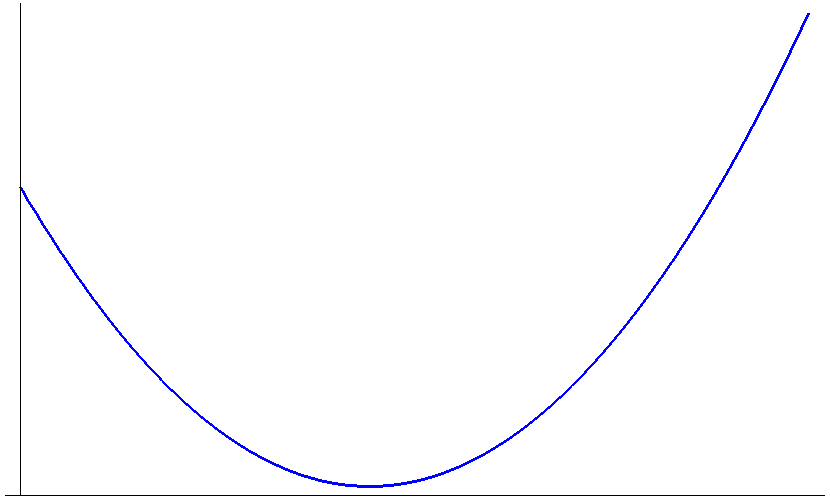}
\caption{Graph of $\theta\mapsto \phi(R,\cdot)$ for large $R$ (left), and graph of $\theta\mapsto \phi(R,\theta)$	for large $R$ in a right neighborhood of $\theta=0$ (right).}
\label{Fig8}
\end{figure}

\noindent

The left plot of Figure~\ref{Fig8} provides a genuine graph of $\phi(R,\cdot)$ for large $R$. Apparently, the  figure shows the presence of a unique (maximum) critical point in the interval $(0,\pi/2)$; the red horizontal line crossing the graph at the points $(\theta_m,\phi_m)$ and $(\bar{\theta},\phi_m)$
at the level $y=\phi_m$. As this figure might suggest that $\phi(R,\cdot)$
is increasing in the whole interval $[0,\theta_M]$, the plot on the right shows a zoom of the graph
of $\phi(R,\cdot)$ in a neighborhood of $\t=0$, with a minimum, $\theta_m$,  close to $\t=0$. These graphs have been computed for $p=3$ and $\l=6$ with $R=300$ and $\t\in[0,0.05]$. In such case, $\phi(300,\cdot)\in[0.15116,0.15122]$.
\par
As, for every $h \in (1 - \tfrac{\phi_M}{\sqrt{\lambda}},
1 - \tfrac{\phi_m}{\sqrt{\lambda}})$,  the equation
$$
    \phi(R,\theta) = (1-h)\sqrt{\lambda}, \qquad \theta\in (0,\pi/2),
$$
has, at least, two solutions:
one with $\t\in(\t_m,\t_M)$ and a second one with $\t\in(\t_M,\bar{\theta})$, we can continuously parameterize these solutions as follows.

\begin{lemma}
\label{le5.3}
There exists a continuous map $\gamma=(\gamma_1,\gamma_2): [0,1]\to {\mathbb R}^2$ such that
\begin{enumerate}
\item[{\rm (a)}] $\gamma_1(s)\in (\theta_m,\theta_M)$ and $\gamma_2(s)\in (\theta_M,\bar{\theta})$ for all $s\in (0,1)$,
\item[{\rm (b)}] $\gamma_{1}(0)=\theta_m$, $\gamma_{2}(0)=\bar{\theta}$,  and $\gamma_{1}(1)= \theta_M=\gamma_{2}(1)$,
\item[{\rm (c)}] $\phi(R,\gamma_1(s))=\phi(R,\gamma_2(s))$ for all $s\in [0,1]$.
\end{enumerate}
\end{lemma}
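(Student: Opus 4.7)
The plan is to reduce the assertion to the Mountain Climbing Theorem of Homma--Whittaker--Keleti \cite{Ho-1952,Wh-1966,Ke-1993} already cited in Section~\ref{sec1}. First, I would rescale both sides of $\t_M$ to the unit interval by introducing
$$
f_1(\tau):=\phi\bigl(R,\t_m+\tau(\t_M-\t_m)\bigr), \qquad f_2(\tau):=\phi\bigl(R,\bar\t+\tau(\t_M-\bar\t)\bigr), \qquad \tau\in[0,1].
$$
Both maps are continuous and, by Proposition \ref{prop5.1}, satisfy
$$
f_1(0)=f_2(0)=\phi_m, \qquad f_1(1)=f_2(1)=\phi_M,
$$
with the crucial strict inequality $f_i(\tau)>\phi_m$ for every $\tau\in(0,1]$ inherited from Proposition \ref{prop5.1}(a).

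Second, I would verify the piecewise-monotonicity hypothesis needed to invoke the MCT. For any fixed $R>0$, the integrand in \eqref{5.9} depends real-analytically on $\t\in(0,\pi/2)$ uniformly in $s\in[0,1]$, and differentiation under the integral sign (justified as in the derivation of \eqref{5.11}) shows that $\t\mapsto\phi(R,\t)$ is real-analytic on $(0,\pi/2)$. Consequently, its critical set on the compact interval $[\t_m,\bar\t]$ is finite, and hence $f_1,f_2$ are continuous piecewise monotone functions with matching endpoint values. The Mountain Climbing Theorem then yields continuous maps $\a_1,\a_2:[0,1]\to[0,1]$ with $\a_i(0)=0$, $\a_i(1)=1$, and $f_1(\a_1(s))=f_2(\a_2(s))$ for every $s\in[0,1]$.

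Third, setting
$$
\g_1(s):=\t_m+\a_1(s)(\t_M-\t_m), \qquad \g_2(s):=\bar\t+\a_2(s)(\t_M-\bar\t),
$$
properties (b) and (c) of the statement follow immediately. To secure the strict inclusions in (a), I would use the endpoint strictness: by Proposition \ref{prop5.1}(a), $f_i(\tau)=\phi_m$ if and only if $\tau=0$, so $\a_1^{-1}(\{0\})=\a_2^{-1}(\{0\})$ is a common closed subinterval of $[0,1]$ containing $0$; the level set $\phi(R,\cdot)^{-1}(\phi_M)\cap[\t_m,\bar\t]$ is finite by real-analyticity, which together with the matching condition $f_1(\a_1(s))=f_2(\a_2(s))$ forces $\a_1^{-1}(\{1\})$ and $\a_2^{-1}(\{1\})$ to share a common terminal plateau. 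A standard reparametrization collapsing these common plateaus at $0$ and at $1$ then upgrades the MCT output so that $\a_i$ strictly maps $(0,1)$ into $(0,1)$, giving (a).

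The main obstacle is the verification of the piecewise-monotonicity hypothesis, since the classical MCT requires it and arbitrary continuous data admits counterexamples. This rests on the real-analyticity of $\phi(R,\cdot)$, which is not entirely automatic from \eqref{5.9} and needs careful justification via differentiation under the integral sign (with care at $s=1$ where the integrand becomes singular). Once that structural fact is in place, the reparametrization producing strict inclusions in (a) is a routine consequence of the strict inequalities in Proposition \ref{prop5.1}(a).
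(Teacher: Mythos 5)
Your proposal is correct and follows essentially the same route as the paper: the paper likewise observes that $\phi(R,\cdot)$ is real analytic on a closed subinterval of $(0,\pi/2)$ containing $[\theta_m,\bar\theta]$, hence has finitely many critical points there, and then invokes the piecewise-monotone version of the Mountain Climbing Theorem of \cite{Wh-1966} (or the no-constancy versions of \cite{Ho-1952,Ke-1993}). Your additional care with the endpoint plateaus and the reparametrization yielding the strict inclusions in (a) only fleshes out details the paper leaves implicit.
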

\noindent

Lemma \ref{le5.3} is just a special version of the Mountain Climbing Theorem, applied to the function $\phi(R,\cdot)$. There are different versions of this theorem which either
require that the function under consideration is piecewise monotone, as in
\cite{Wh-1966}, or that it has no intervals where it is constant, as in \cite{Ho-1952,Ke-1993}. The function $\phi(R,\cdot)$ is real analytic in a closed sub-interval $J$ of $(0,\pi/2)$
with $[\theta_m,\bar{\theta}]\subset {\rm int}J$. Therefore, $\phi(R,\cdot)$ must have finitely many
critical points in $[\theta_m,\bar{\theta}]$ and the Mountain Climbing Theorem can be applied.
\par
The geometric interpretation of Lemma \ref{le5.3} goes as follows. There are two climbers,
$(\gamma_1(s),\phi(R,\gamma_1(s)))$ and $(\gamma_2(s),\phi(R,\gamma_2(s)))$,
aiming to attain the highest  pick of a  mountain at $\theta=\theta_M$,
climbing continuously on the two sides
$$
   \{(\theta,\phi(R,\theta)): \theta\in [\theta_m,\theta_M)\} \quad\text{and }\quad
   \{(\theta,\phi(R,\theta)): \theta\in (\theta_M,\bar{\theta}]\}
$$
of the graph of $\phi(R,\cdot)$. The two climbers
start at the time $s=0$ at the same level, $\phi_m$, though at opposite sides
of the peak, and both meet at the summit, $(\theta_M,\phi_M)$, at the time $s=1$. While climbing, they maintain the same altitude at each time. It is apparent that if $\theta_M\in (\theta_m,\pi/2)$ is the unique point of maximum of $\phi(R,\cdot)$ for $R$ large (see  Figure \ref{Fig8} and the discussion after Proposition \ref{prop5.1}), then, one can take the functions
$$
    \eta_1:=\bigl(\phi(R,\cdot)\vert_{[\theta_m,\theta_M]}\bigr)^{-1}, \quad
   \eta_2:=\bigl(\phi(R,\cdot)\vert_{[\theta_M,\bar{\theta}\,]}\bigr)^{-1},
$$
defined on $[\phi(R,0),\phi_M]$, and re-parameterize them to $[0,1]$,
by setting
$$
  \gamma_1(s):= \eta_i\bigl(\phi_m + s(\phi_M-\phi_m)\bigr), \quad s\in [0,1], \quad (i=1,2),
$$
without invoking the Mountain Climbing Theorem. Using this theorem allows us to avoid a further analysis of $\frac{\p\phi}{\p\t}(R,\t)$ in $[\pi-\varepsilon,\frac{\pi}{2})$ in order to prove rigorously that $\theta_M$ is the unique local maximum of  $\phi(R,\cdot)$ for sufficiently large $R>0$. As it can be proved that, for some $\t(R)\in(\t_M,\pi/2)$,
$$
\lim_{R\uparrow +\infty}\frac{\p\phi}{\p\t}(R,\t(R))=+\infty
=\lim_{R\uparrow +\infty}\frac{\p^2\phi}{\p\t^2}(R,\t(R))
$$
the underlying analysis can become quite hard due to the lack of information on the sign of the derivatives near $\pi/2$.
\par
Now, we are ready to prove the existence of asymmetric positive solutions for \eqref{1.1}.

\begin{theorem}
\label{th5.1}
For any given $\lambda\in \left(\pi^2/4,\pi^2\right)$, there exists $R_{\lambda}>0$ such that, for every $R>R_{\lambda}$, the problem \eqref{1.1} has, at least, two asymmetric positive solutions for a suitable $h\in (0,1)$. Moreover, $\lim_{R\uparrow +\infty}h=1$.
\end{theorem}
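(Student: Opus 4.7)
The plan is to eliminate $h$ from the system \eqref{5.8}, rewrite the resulting scalar condition along the mountain--climbing curve of Lemma \ref{le5.3}, and then apply the intermediate value theorem using the asymptotic values of the two endpoints as $R\uparrow+\infty$.

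From \eqref{5.8}, any asymmetric positive solution satisfying \eqref{5.2} corresponds to a pair $(\theta_0,\theta_1)\in (0,\pi/2)^2$ with $\theta_0\neq\theta_1$, $\phi(R,\theta_0)=\phi(R,\theta_1)$, and
\begin{equation*}
\theta_0+\theta_1+\phi(R,\theta_0)=\sqrt{\lambda},
\end{equation*}
from which $h=(\theta_0+\theta_1)/\sqrt{\lambda}\in (0,1)$ is recovered. Using the parametrization $\gamma=(\gamma_1,\gamma_2)$ of Lemma \ref{le5.3}, I would define the continuous function
\begin{equation*}
F(s):=\gamma_1(s)+\gamma_2(s)+\phi(R,\gamma_1(s)),\qquad s\in [0,1],
\end{equation*}
and seek $s^*\in (0,1)$ with $F(s^*)=\sqrt{\lambda}$. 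For every such $s^*$, Lemma \ref{le5.3}(a) ensures $\gamma_1(s^*)<\theta_M<\gamma_2(s^*)$, so that $(\theta_0,\theta_1)=(\gamma_1(s^*),\gamma_2(s^*))$ delivers one asymmetric positive solution of \eqref{1.1}; a second one arises by reflection about $x=0.5$, equivalently by swapping the roles of $\theta_0$ and $\theta_1$.

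The crux is estimating the endpoint values $F(0)=\theta_m+\bar\theta+\phi_m$ and $F(1)=2\theta_M+\phi_M$ in the limit $R\uparrow+\infty$. For $\phi_M$ and $\theta_M$, I would match the asymptotic profile \eqref{5.10}, which gives $\phi(R,\theta)\to 0$ uniformly on compact subsets of $[0,\pi/2)$, against the linear behavior $\phi(R,\theta)\approx 2(\pi/2-\theta)$ near $\pi/2$ coming from Lemma \ref{le5.1}(iii). This two-scale matching localizes $\theta_M$ in a left neighborhood of $\pi/2$ whose width shrinks as $R\uparrow+\infty$, so that $\theta_M\uparrow \pi/2$ and $\phi_M\downarrow 0$, whence $F(1)\to\pi$. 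For the other endpoint, $\theta_m\in [0,\varepsilon]$ combined with the uniform convergence on $[0,\varepsilon]$ yields $\phi_m\downarrow 0$ and $\theta_m\to 0$, while $\phi(R,\bar\theta)=\phi_m\to 0$ together with the linear profile near $\pi/2$ forces $\bar\theta\uparrow \pi/2$; hence $F(0)\to \pi/2$.

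Since $\lambda\in (\pi^2/4,\pi^2)$ yields $\sqrt{\lambda}\in (\pi/2,\pi)$, there exists $R_\lambda>R^*$ such that, for every $R>R_\lambda$, $F(0)<\sqrt{\lambda}<F(1)$. The intermediate value theorem then produces $s^*\in (0,1)$ with $F(s^*)=\sqrt{\lambda}$, delivering the pair of asymmetric positive solutions. Setting $h:=(\gamma_1(s^*)+\gamma_2(s^*))/\sqrt{\lambda}$, one has
\begin{equation*}
1-h=\frac{\phi(R,\gamma_1(s^*))}{\sqrt{\lambda}}\in \Big[\frac{\phi_m}{\sqrt{\lambda}},\frac{\phi_M}{\sqrt{\lambda}}\Big],
\end{equation*}
and since both $\phi_m$ and $\phi_M$ tend to $0$, we conclude $h\uparrow 1$ as $R\uparrow+\infty$. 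The main obstacle is the endpoint asymptotics of $F$—specifically, rigorous control of the locations of $\theta_M$ and $\bar\theta$ near $\pi/2$—which requires a careful two-scale matching between the transverse bound \eqref{5.10} valid away from $\pi/2$ and the normal linear profile of Lemma \ref{le5.1}(iii) near $\pi/2$.
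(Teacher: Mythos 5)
Your proposal is correct and follows essentially the same route as the paper: the paper defines $g(s)=\gamma_1(s)+\gamma_2(s)+\phi(R,\gamma_1(s))$, bounds $\lim_{s\downarrow 0}g(s)<\pi/2+2\varepsilon$ via the inequality $\phi(R,\theta)<\pi-2\theta$ and $\lim_{s\uparrow 1}g(s)\geq \pi-2\varepsilon$ via $\theta_M\geq \pi/2-\varepsilon$, then applies the intermediate value theorem after fixing $\varepsilon$ with $\pi/2+2\varepsilon<\sqrt{\lambda}<\pi-2\varepsilon$, and finally recovers $h\uparrow 1$ from $\phi_M\to 0$, exactly as you do. The ``two-scale matching'' you flag as the main obstacle is not actually needed: the localization $\theta_m\leq\varepsilon$ and $\bar\theta,\theta_M\in[\pi/2-\varepsilon,\pi/2)$ is already supplied by Lemma \ref{le5.2} and the definitions preceding Proposition \ref{prop5.1}, and the crude bound $\phi(R,\theta)<\pi-2\theta$ from Lemma \ref{le5.1}(i) suffices for the endpoint estimates without any sharper asymptotics near $\pi/2$.
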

\begin{proof}
As we have previously remarked, the asymmetric positive solutions come in pairs. Thus, it suffices to show the existence of, at least, one asymmetric solution. In terms of
$$
   \phi(R,\t)\equiv 2\sqrt{\l}\,S_{\mathscr{N}}(\l,R,\t),
$$
\eqref{5.8} can be, equivalently expressed, as
\begin{equation}
\label{v.13}
\begin{cases}
\theta_0 + \theta_1 = h\sqrt{\l},\\[1pt]
\phi(R,\theta_0) = (1-h)\sqrt{\l}=\phi(R,\theta_1).
\end{cases}
\end{equation}
By Lemma \ref{le5.3} with  $\theta_0=\gamma_1(s)$ and $\theta_1=\gamma_2(s)$,
we have that, for every $s\in(0,1)$,
\begin{equation}
\label{5.14}
\phi(R,\gamma_1(s)) =
\phi(R,\gamma_2(s)) =(1-h(s))\sqrt{\l}\in (\phi_m,\phi_M).
\end{equation}
Thus, we need only to solve the first equation of \eqref{v.13}, which reads as
\begin{equation}
\label{5.15}
\gamma_1(s) + \gamma_2(s) + \phi(R,\gamma_1(s)) = \sqrt{\lambda}
\end{equation}
by \eqref{5.14}. Equivalently, $g(s)=\sqrt{\l}$, where we are setting
$$
   g(s):= \gamma_1(s) + \gamma_2(s) + \phi(R,\gamma_1(s)),\qquad s\in [0,1].
$$
Since $\phi(R,\t)<\pi-2\t$ for every $R>0$, we find from Lemma \ref{le5.3} that
\begin{align*}
		\lim_{s\downarrow 0} g(s) =
		\theta_m + \bar{\theta} + \phi(R,\theta_m) & = \theta_m +\bar{\theta} + \phi(R,\bar{\theta})\\
		&<\theta_m + \bar{\theta} + {\pi} - 2\bar{\theta}=		\theta_m+ {\pi}-\bar{\theta}.
\end{align*}
Thus, recalling that $\theta_m\leq \varepsilon$ and $\bar{\theta} \geq \pi/2-\varepsilon$, it becomes apparent that
$$
\lim_{s\downarrow 0} g(s)  <\pi/2 + 2\varepsilon,
$$
provided $R>R^*(\e)\equiv R^*(\e,\pi/2-\e)$, where $R^*(\e,\pi/2-\e)$ is the value of $R$ given by Lemma \ref{le5.2}. As we are assuming that $\sqrt{\lambda} > \pi/2$, we can  make the choice
$$
    2\varepsilon \in (0,\sqrt{\l}-\pi/2).
$$
This ensures that, for every $R>R^*(\e)$,
\begin{equation}
\label{5.16}
		\lim_{s\downarrow 0} g(s) < \sqrt{\lambda}.
\end{equation}
Thanks again to Lemma \ref{le5.3},
$$
	\lim_{s\uparrow 1} g(s) =2\theta_M + \phi(R,\theta_M) = 2\theta_M+\phi_M.
$$
Since $\sqrt{\l}<\pi$ and $2\theta_M\geq \pi -2\varepsilon$, if  $\varepsilon>0$ is chosen so that
$$
\sqrt{\lambda} < \pi -2\e,
$$
then, for every $R>R^{*}(\varepsilon)$, we find that
$$
    \lim_{s\uparrow 1} g(s) \geq \pi -2\varepsilon > \sqrt{\lambda},
$$
besides \eqref{5.16}. Note that $\e$ has been chosen so that
$$
   \tfrac{\pi}{2} + 2\e <\sqrt{\l}<\pi-2\e.
$$
Therefore, by the continuity of $g(s)$,  there is $\hat{s}\in (0,1)$ such that
$g(\hat s)= \sqrt{\lambda}$. By construction, $\gamma_1(\hat{s}) < \theta_M < \gamma_1(\hat{s})$, so that we can take $\theta_0=\gamma_1(\hat{s})$ and $\theta_1=\gamma_2(\hat{s})$, or $\theta_0=\gamma_2(\hat{s})$ and
$\theta_1=\gamma_1(\hat{s})$, to get the desired asymmetric solutions.
Finally, from \eqref{5.14}, we find that
\begin{equation}
\label{5.17}
    \phi(R,\gamma_1(\hat{s})) =
	\phi(R,\gamma_2(\hat{s}))=(1-h(\hat{s}))\sqrt{\l}\in(\phi_m,\phi_M),
\end{equation}
providing us with the precise value of $h$ for which we can guarantee the existence of two asymmetric solutions,
\begin{equation}
\label{5.18}
    h(\hat{s})= 1 -\frac{\phi(R,\g_1(\hat{s}))}{\sqrt{\lambda}}.
\end{equation}
Observe that, thanks to \eqref{5.17},
$$
\lim_{R\uparrow +\infty}(1-h(\hat s))\sqrt{\l} \leq \lim_{R\uparrow +\infty}\phi_M:=\lim_{R\uparrow +\infty}\phi(R,\t_M)=0.
$$
Consequently,
\begin{equation}
\label{5.19}
   \lim_{R\uparrow +\infty}h(\hat s)=1.
\end{equation}
This ends the proof.
\end{proof}

Note that, recalling that $R=\|u\|_{\infty}$,  \eqref{5.19} is a counterpart of \eqref{3.1}.

\end{document}